\newtheorem{theorem}{Theorem}[section]
\newtheorem{lemma}[theorem]{Lemma}
\newtheorem{proposition}[theorem]{Proposition}
\newtheorem{corollary}[theorem]{Corollary}
\theoremstyle{definition}
\newtheorem{definition}[theorem]{Definition}
\newtheorem{example}[theorem]{Example}
\numberwithin{equation}{section}
\newtheorem{remark}[theorem]{Remark}
\begin{document}
\date{\today}
\title{ 
Local B-model and Mixed Hodge Structure
}
\author{Yukiko Konishi}
\address{ 
Department of Mathematics, 
Kyoto University,
Kyoto 606-8502 JAPAN 
}
\email{konishi@math.kyoto-u.ac.jp}
\author{Satoshi Minabe}
\address{
Max-Planck-Instutut f\"ur Mathematik,
Vivatsgasse 7, 53111 Bonn, Germany}
\email{minabe@mpim-bonn.mpg.de}

\begin{abstract}
We study the mixed Hodge theoretic aspects of the 
B-model side of local mirror symmetry.  
Our main objectives are to define an analogue of the Yukawa 
coupling in terms of the variations 
of the mixed Hodge structures and to study its properties.  
We also describe a local version of 
Bershadsky--Cecotti--Ooguri--Vafa's
holomorphic anomaly equation.
\end{abstract}
\subjclass[2000]{Primary 14J32;  Secondary 14D07, 14N35}
\maketitle

\newcommand{\V}{\mathbf{V}}
\newcommand{\T}{\mathbb{T}}
\newcommand{\polytope}{\tilde{\Delta}}
\newcommand{\surface}{S_{\Delta}} 
\newcommand{\ring}{\mathbf{S}}
\newcommand{\divisor}{\mathbb{D}}
\newcommand{\res}{\mathrm{Res}}
\newcommand{\hodge}{\mathcal{F}}
\newcommand{\weight}{\mathcal{W}}
\newcommand{\threeform}{\omega}
\newcommand{\moduli}{\mathcal{M}(\Delta)}
\newcommand{\GM}{\nabla}
\newcommand{\koszul}{\mathcal{K}}
%%%%%%%%%%%%%%%%%%%%%%%%%%%%%
\newcommand{\D}{\mathcal{D}'}
\newcommand{\M}{\mathbf{M}}

%%%%%%%%%%%%%%%%%%%%%%%%%%%%%
\newcommand{\hyper}{\mathcal{T}}
\newcommand{\wronskian}{\mathrm{Wr}}
\newcommand{\KS}{\varrho}
\newcommand{\contraction}{\iota}
%%%%%%%%%%%%%%%%%%%%%%%%%%%%%%%%%%%%%%%%%%%%%%%
%%%%%%%%%%%%%%%%%%%%%%%
\section{Introduction}
%%%%%%%%%%%%%%%%%%%%%%%
\subsection{Local mirror symmetry}
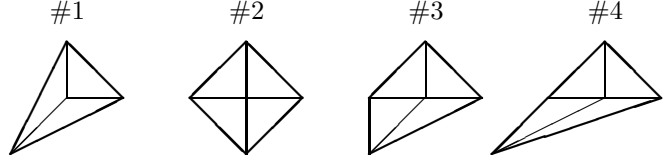
\begin{figure}[t]
\unitlength .075cm

%%%%%%%%%%%%%%%%%%%%%%%%%%%%%%%%
\begin{picture}(22,30)(-11,-10) %P2
\thinlines
\put(0,0){\line(1,0){10}}
\put(0,0){\line(0,1){10}}
\put(0,0){\line(-1,-1){10}}
\thicklines
\put(10,0){\line(-1,1){10}}
\put(0,10){\line(-1,-2){10}}
\put(-10,-10){\line(2,1){20}}
\put(-3,14){$\#$1}
\end{picture}
\hspace*{.5cm}
%%%%%%%%%%%%%%%%%%%%%%%%%%%%%%%%
\begin{picture}(22,30)(-11,-10) %F0
\thinlines
\put(0,0){\line(1,0){10}}
\put(0,0){\line(0,1){10}}
\put(0,0){\line(-1,0){10}}
\put(0,0){\line(0,-1){10}}
\thicklines
\put(10,0){\line(-1,1){10}}
\put(0,10){\line(-1,-1){10}}
\put(-10,0){\line(1,-1){10}}
\put(0,-10){\line(1,1){10}}
\put(-3,14){$\#$2}
\end{picture}
\hspace*{.5cm}
%%%%%%%%%%%%%%%%%%%%%%%%%%%%%%%%
\begin{picture}(22,30)(-11,-10) %F1
\thinlines
\put(0,0){\line(1,0){10}}
\put(0,0){\line(0,1){10}}
\put(0,0){\line(-1,0){10}}
\put(0,0){\line(-1,-1){10}}
\thicklines
\put(10,0){\line(-1,1){10}}
\put(0,10){\line(-1,-1){10}}
\put(-10,0){\line(0,-1){10}}
\put(-10,-10){\line(2,1){20}}
\put(-3,14){$\#$3}
\end{picture}
\hspace*{.5cm}
%%%%%%%%%%%%%%%%%%%%%%%%%%%%%%%%
\begin{picture}(22,30)(-11,-10)%F_2
\thinlines
\put(0,0){\line(1,0){10}}
\put(0,0){\line(0,1){10}}
\put(0,0){\line(-1,0){10}}
\put(0,0){\line(-2,-1){20}}
\thicklines
\put(10,0){\line(-1,1){10}}
\put(0,10){\line(-1,-1){10}}
\put(-10,0){\line(-1,-1){10}}
\put(-20,-10){\line(3,1){30}}
\put(-3,14){$\#$4}
\end{picture}
\caption{Examples of $2$-dimensional reflexive polyhedra. 
($\mathbb{P}^2$,
$\mathbb{F}_0,\mathbb{F}_1,\mathbb{F}_2$ cases.)}\label{fig:Hirzebruch}
\end{figure}
%%%%%%%%%%%%%%%%%%%%%%%%%%%%%%%%%%%%%%%%%%%%%%%
Mirror symmetry 
states a relationship between 
the genus zero Gromov--Witten theory (``A-model'') of a 
Calabi--Yau threefold $X$ and 
the Hodge theory (``B-model'') 
of its mirror Calabi--Yau threefold $X^{\vee}$.
After the first example of a quintic hypersurface in $\mathbb{P}^4$
and its mirror \cite{GreenePlesser, Candelas},
Batyrev \cite{Batyrev-mirror} showed that a mirror pair of Calabi--Yau hypersurfaces
in toric varieties can be 
constructed from a reflexive polyhedron%
\footnote{See \S \ref{sec:reflexive} for the definition of reflexive polyhedra.
}.
Local mirror symmetry was 
derived from mirror symmetry for toric Calabi--Yau hypersurfaces
by considering a certain limit in the K\"ahler and complex moduli spaces%
\footnote{
This limit typically corresponds to a situation 
on the A-model side
where one considers the effect of
a {\it local} geometry of a weak Fano surface 
within a Calabi--Yau threefold. 
Hence the term ``local mirror symmetry''. 
See \cite[\S 4]{KatzKlemmVafa}.}
\cite{KatzKlemmVafa} \cite{CKYZ}.
Chiang--Klemm--Yau--Zaslow \cite{CKYZ}  gave quite a thorough mathematical 
treatment to it.
Their result can be summarized as follows.

Take a  $2$-dimensional reflexive polyhedron $\Delta$
(see Figure \ref{fig:Hirzebruch} for examples). 
On one side (``lcoal A-model'' side), 
one considers the genus zero local Gromov--Witten (GW) invariants
of a   
smooth weak Fano toric surface $\mathbb{P}_{\Sigma(\Delta^*)}$
which is determined by the $2$-dimensional 
complete fan $\Sigma(\Delta^*)$ generated by integral points
of $\Delta$. 
On the other side (``local B-model'' side),
one considers  a system of differential equations 
associated to $\Delta$ 
called the  $A$-hypergeometric system
with parameter zero
due to
Gel'fand--Kapranov--Zelevinsky
\cite{GKZ1,GKZ2}.
Then the statement of local mirror symmetry is that 
the genus zero local GW invariants can be obtained
from solutions of the $A$-hypergeometric system.
\begin{remark}
The problem of computing the local GW invariants, 
not only at genus zero but also at all genera, 
is solved completely
by the method of the topological vertex \cite{AKMV}.
\end{remark}

%%%%%%%%%%%%%%%%%%%%%%%%%%%%%%%%%%%%%%%%%%%%%%%%%%%%%%%%%%%%%%%%%%%%%%%%%%%%%%%%%%%%%%%%%%%%%%%%%%%%%%
\subsection{Local B-model and the mixed Hodge structure}\label{sec:intro2}
When one compares local mirror symmetry with 
mirror symmetry,
it is easy to see an analogy between the A-model (GW invariants)
and the local A-model (local GW invariants).
To compare the B-model and the local B-model,
let us look into them in more detail.
A natural framework for the B-model is 
the variation of polarized Hodge structures on $H^3(X^{\vee})$
\footnote{Throughout the paper, 
the coefficient of the cohomology group is $\mathbb{C}$
unless otherwise specified.
}
(cf. \cite[Ch.5]{CoxKatz}, \cite[Ch.1, Ch.3]{Voisin-mirror}).
One considers 
\begin{enumerate}
\item[(i)] the family $\pi:\mathcal{X}\to B$
of complex deformations
of the Calabi--Yau threefold $X^{\vee}$, 
\item[(ii)] 
a relative holomorphic three form $\Omega_{\mathcal{X}/B}$
which, together with the elements obtained by successive applications of
the Gauss--Manin connection $\nabla$, 
spans $H^3(X^{\vee})$,
\item[(iii)]
the Picard--Fuchs system for period integrals of $\Omega_{\mathcal{X}/B}$,
\item[(iv)] 
an $\mathcal{O}_B$-multilinear symmetric map from 
$TB\times TB\times TB$ to $\mathcal{O}_B$
called
the 
Yukawa coupling:
$$
Y_b(A_1,A_2,A_3)=\int_{X^{\vee}_b}\nabla_{A_1}\nabla_{A_2}\nabla_{A_3} 
\Omega_{\mathcal{X}/B}\wedge \Omega_{\mathcal{X}/B}\, , 
\qquad(b\in B)~.
$$
\end{enumerate}

Let us turn to the local B-model. Our proposal in this paper is that 
a natural language for the local B-model is 
the mixed Hodge structures and their variations.
The mixed Hodge structure (MHS) 
due to Deligne \cite{Deligne2} is
a generalization of the Hodge structure with the extra data
$\mathcal{W}_{\bullet}$ called the weight filtration. 
See \S \ref{section:Preliminaries}. 
Although the cohomology $H^*(V^{\circ})$
of an open smooth variety $V^{\circ}$ does not have a Hodge structure
in general, it does have a canonical MHS \cite{Deligne2, Deligne3}. 
There is also a canonical  
one on the relative cohomology 
$H^*(U^{\circ},V^{\circ})$.

Now, let us explain what are the counterparts of (i)--(iv) in the local B-model. 
Let $\Delta$ be a 2-dimensional reflexive polyhedron
as above and  $F_a$ be 
a $\Delta$-regular Laurent polynomial, i.e., 
a Laurent polynomial of the form
$$
F_a(t_1,t_2)=\sum_{m\in \Delta\cap \mathbb{Z}^2} a_m t^m~\, \in \mathbb{C}
[t_1^{\pm 1},t_2^{\pm 1}]
$$
which satisfies a certain regularity condition (cf. Definition \ref{def:regular}). 
In the literature, two closely related manifolds
associated to $F_a$ are considered: 
the one is the affine curve  $C^{\circ}_a$ 
in the $2$-dimensional algebraic torus $\mathbb{T}^2=(\mathbb{C}^*)^2$
defined by $F_a(t_1,t_2)=0$ \cite[\S 6]{CKYZ}, 
and the other is the open threefold 
$Z^{\circ}_a\subset \mathbb{T}^2\times \mathbb{C}^2$ 
defined by $F_a+xy=0$ \cite[\S 8]{HIV}.
As we shall see, they give the same result. 
By varying the parameter $a=(a_m)$, we have 
a family of affine curves 
$\mathcal{Z}\rightarrow \mathbb{L}_{{\rm reg}}(\Delta)$
and a family of open threefolds $\mathcal{Z}'\rightarrow 
\mathbb{L}_{{\rm reg}}(\Delta)$.
By taking a quotient by the following action of $\T^3=(\mathbb{C}^*)^3$, 
$$F_a(t_1,t_2)\mapsto \lambda_0 F_a(\lambda_1 t_1,\lambda_2 t_2)\, , \quad 
(\lambda_0,\lambda_1,\lambda_2)\in \T^3, 
$$
we also have
the quotient families $\mathcal{Z}/\T^3\to \moduli$
and $\mathcal{Z}'/\T^3\rightarrow \moduli$. 
These correspond to (i).
As a counterpart of (ii),
we consider, for the affine curve $C^{\circ}_a$, the class
$$
\omega_0
=\Big[\Big(
\frac{dt_1}{t_1}\wedge \frac{dt_2}{t_2}\,,\,0
\Big)\Big] 
 \in H^2(\T^2,C^{\circ}_a)~
\footnote{Note that the class $\omega_0$
depends on the parameter $a$, 
although it is not indicated in the notation.}
~,
$$
in the relative cohomology $H^2(\T^2,C^{\circ}_a)$, 
and for the open threefold $Z^{\circ}_a$,
the class of a holomorphic $3$-form:
$$
\omega_a=
\Big[
\mathrm{Res} \frac{1}{F_a+xy}\frac{dt_1}{t_1}\frac{dt_2}{t_2}dx dy\Big]
\in H^3(Z^{\circ}_a)\, .
$$ 
The counterpart of (iii) is the $A$-hypergeometric system as 
explained in \cite{CKYZ}.
Batyrev \cite{Batyrev} and Stienstra 
\cite{Stienstra} studied the variation of MHS (VMHS) 
on $H^2(\T^2, C_a^{\circ})$ and showed the followings:  
$H^2(\T^2,C^{\circ}_a)\cong \mathbb{C}\omega_0\oplus PH^1(C_a^{\circ})$ 
is isomorphic to a
certain vector space $\mathcal{R}_{F_a}$;
$\omega_0$ and elements obtained by successive applications of the 
Gauss--Manin connection span $H^2(\T^2,C^{\circ}_a)$;
$\omega_0$ satisfies the $A$-hypergeometric system 
considered in \cite{CKYZ}.
For the polyhedron \#1 in Figure \ref{fig:Hirzebruch}, 
Takahashi  \cite{NTakahashi} independently showed that integrals 
\begin{equation}\nonumber
\int_{\Gamma}\omega_0~,
\quad \Gamma \in  H_2 (\T^2, C_a^{\circ}, \mathbb{Z})\, , 
\end{equation}
over  $2$-chains $\Gamma$ whose boundaries  
lie in 
$C^{\circ}_a$
satisfy the same differential equation.  
For the open threefold $Z^{\circ}_a$,
there is a result by Hosono \cite{Hosono} that integrals 
\begin{equation}\nonumber
\int_{\gamma}\omega_a~,\quad
\gamma \in H_3(Z^{\circ}_a, \mathbb{Z})\, ,
\end{equation}
satisfy exactly the same $A$-hypergeometric system. 
It has been known that $H^3(Z_a^{\circ}) \cong H^2(\T^2,C_a^{\circ})$.
Gross \cite[\S 4]{Gross} described the isomorphism and mentioned that
the integration of  $\omega_a$ over a $3$-cycle reduces to
that  of  $\omega_0$ over a $2$-chain under the isomorphism.
In this paper, we shall study the (V)MHS of $H^3(Z^{\circ}_a)$ and show that
it has the same description as $H^2(\T^2,C_a^{\circ})\cong \mathcal{R}_{F_a}$ 
and that $\omega_a$ plays the same role as $\omega_0$. 
This is one of the main results of this paper (cf. Theorem \ref{thm:relationship}). 

\begin{remark}
In \cite{CKYZ}, Chiang et al. considered the
 ``$1$-form''
$
\mathrm{Res}_{F_a=0} (\log F_a)\omega_0
$  on $C^{\circ}_a$ and argued
that its period integrals satisfy the $A$-hypergeometric system.
The result by  Batyrev, Stienstra, and Takahashi implies that
$\omega_0\in H^2(\T^2,C^{\circ}_a)$ 
gives a rigorous definition of this ``$1$-form''. 
This point was mentioned in \cite{Gross}.
\end{remark}

\begin{remark}
Calculation of the (V)MHS of $H^3(Z^{\circ}_a)$
in this paper closely follows the result by Batyrev on 
the MHS of affine hypersurfaces in algebraic tori \cite{Batyrev}.
\end{remark}

\subsection{Weight filtration and the Yukawa coupling}\label{sec:intro-yukawa}
At this point, one may ask what is the role of the weight filtration.
Our answer is that it is needed to define an analogue of the Yukawa coupling.
It is the main motivation of the present work.
In general, 
the lowest level subspace of the weight filtration in $H^*(V^{\circ})$
is the image of the cohomology $H^{*}(V)$ 
of a smooth compactification $V$ 
(see, e.g., \cite[Proposition 6.30]{PS}).
In our cases, it turns out that  
the lowest level subspace
$\mathcal{W}_1 H^2(\T^2,C^{\circ}_a)$
(resp. $\mathcal{W}_3H^3(Z^{\circ}_a)$)
of the weight filtration on
$H^2(\T^2,C^{\circ}_a)$ (resp. $H^3(Z^{\circ})$) 
is isomorphic to $H^1(C_a)$ (resp. $H^3(Z_a)$), 
where $C_a$ (resp. $Z_a$) is a smooth compactification
of $C_a^{\circ}$ (resp. $Z_a^{\circ}$).
Thus we can use the intersection product on $H^1(C_a)$ or $H^3(Z_a)$ to define 
an analogue of the Yukawa coupling. 

As a counterpart to (iv), we propose the following definition (Definition \ref{definition:Yukawa2}).
Consider the family of affine curves $\mathcal{Z}\rightarrow \mathbb{L}_{{\rm reg}}(\Delta)$.
Let $T^0\mathbb{L}_{{\rm reg}}(\Delta)$ be the 
subbundle of the holomorphic tangent bundle $T\mathbb{L}_{{\rm reg}}(\Delta)$
spanned by  $\partial_{a_0}$
\footnote{Here $a_0$ is the parameter corresponds to the origin $(0,0) \in \Delta \cap \mathbb{Z}^2$.}. 
Our Yukawa coupling  is a multilinear map from
$T\mathbb{L}_{{\rm reg}}(\Delta) 
\times
T\mathbb{L}_{{\rm reg}}(\Delta) 
\times T^0\mathbb{L}_{{\rm reg}}(\Delta)$ to $\mathcal{O}_{\mathbb{L}_{{\rm reg}}(\Delta)}$.
Take three vector fields $(A_1,A_2,A_3)\in 
T\mathbb{L}_{{\rm reg}}(\Delta) 
\times
T\mathbb{L}_{{\rm reg}}(\Delta) 
\times T^0\mathbb{L}_{{\rm reg}}(\Delta).
$
By the result on VMHS, we see that 
$\nabla_{A_3}\omega_0$
can be regarded as a $(1,0)$-form on $C_a$,  
and that  
although  
$\nabla_{A_1}\nabla_{A_2}\omega_0$ may not be in $\mathcal{W}_1$, 
we can associate a $1$-form $(\nabla_{A_1}\nabla_{A_2}\omega_0)'$
on $C_a$ (Lemma \ref{prop:6-1}).
We define 
$$
\mathrm{Yuk}(A_1,A_2;A_3)=\sqrt{-1}\int_{C_a} 
(\nabla_{A_1}\nabla_{A_2}\omega_0)'
\wedge \nabla_{A_3} \omega_0~.
$$
It is also possible to define the Yukawa coupling using the family of open
threefolds. In fact they are the same
up to multiplication by a nonzero constant.
We also have a similar definition for the quotient family
(\S \ref{sec:QuotientFamily}). 

In addition to the above geometric definition, 
we give an algebraic description of the Yukawa coupling 
via a certain pairing considered by Batyrev \cite{Batyrev} 
(cf. \S \ref{sec:pairing}, \ref{sec:yukawaRF}).  
We also derive the differential equations for them 
(Proposition \ref{prop:Yukawa-equation1}, Lemma \ref{prop:diffyukawa1}).
These results enables us to compute 
the Yukawa couplings at least in   
the examples shown in Figure \ref{fig:Hirzebruch}
(cf. Example \ref{example:p2yukawa}, \S 8).
They agree with the known results 
\cite{KlemmZaslow, FJ, ABK, BT, HaghihatKlemmRauch, AlimLaengeMayr}.
We also see that they are mapped to  the local A-model Yukawa couplings
by the mirror maps (cf. Example \ref{P2Ayukawa}, \S 8).

%%%%%%%%%%%%%%%%%%%%%%%%%%%%%%%%%%%%%%%%%%%%%%%
\subsection{Local B-model at higher genera}
%%%%%%%%%%%%%%%%%%%%%%%%%%%%%%%%%%%%%%%%%%%%%%%
If we are to pursue further the analogy between 
mirror symmetry and local mirror symmetry to higher genera,
the first thing 
to do is to formulate 
an analogue of the so-called 
special K\"ahler geometry.
It is a K\"ahler metric on 
the moduli $B$ 
of complex deformations of a Calabi--Yau threefold $X^{\vee}$
whose curvature
satisfies a certain equation 
called the special geometry relation.
In the setting of the local B-model,
we define a Hermitian metric on the rank one subbundle
$T^0\moduli$ of $T\moduli$
and derive an equation
similar to the special geometry relation (Lemma \ref{prop:specialgeometry}).

Next, we consider 
Bershadsky--Cecotti--Ooguri--Vafa's (BCOV's) holomorphic 
anomaly equation \cite{BCOV}.
It is 
a partial differential equation 
for the B-model topological string amplitudes $F_g$'s\footnote{
Its mathematical definition is yet unknown for $g\geq 2$.}
which involves the K\"ahler metric, its K\"ahler potential and the Yukawa coupling.
By making use of the result on the VMHS of $H^2(\T^2,C^{\circ}_a)$
(or $H^3(Z_a^{\circ})$), we propose how to adapt
the holomorphic anomaly equation to the setting of  the local B-model
(eqs. \eqref{eq:HAE1}, \eqref{eq:HAE2}). 
We also explain it from 
Witten's geometric quantization approach \cite{Witten}.

In the examples shown in Figure \ref{fig:Hirzebruch},
we checked that 
the solutions of this holomorphic anomaly equation
with appropriate holomorphic ambiguities
and with the holomorphic limit
give the correct local GW invariants for $g=1,2$
at least for small degrees.

\begin{remark}
It is known
that, in the local setting,
the K\"ahler potential drops out 
from BCOV's holomorphic anomaly equation,   
and consequently the equation is solved 
by a certain Feynman rule with only one type of 
propagators $S^{i,j}$ with two indices
\cite{KlemmZaslow, HosonoHAE, ABK,  HaghihatKlemmRauch, AlimLaengeMayr}.
Moreover, it is also known that essentially only one direction in 
$\mathcal{M}(\Delta)$ corresponding to the moduli of 
the elliptic curve $C_a$ is relevant. 
Our description of BCOV's holomorphic anomaly equation 
is based on these results.
\end{remark}
%%%%%%%%%%%%%%%%%%%%%%%%%%%%%%%%%%%%%%%%%%%%%%%
\subsection{Plan of the paper}
%%%%%%%%%%%%%%%%%%%%%%%%%%%%%%%%%%%%%%%%%%%%%%%
In \S \ref{section:Preliminaries}, we recall the definition
of the mixed Hodge structure.
In \S \ref{section:Polyhedron}, we 
define
the vector space $\mathcal{R}_{F_a}$  following Batyrev \cite{Batyrev}
and recall Gel'fand--Kapranov--Zelevinsky's
$A$-hypergeometric system \cite{GKZ1,GKZ2}.
In \S \ref{section:relative},  we give descriptions due to 
Batyrev \cite{Batyrev} and Stienstra \cite{Stienstra} of 
(V)MHS on
the relative cohomology $H^n(\T^n,V_a^{\circ})$
of the pair of the $n$-dimensional algebraic torus $\T^n$
and an affine hypersurface $V_a^{\circ}$
in terms of $\mathcal{R}_{F_a}$ (Theorem \ref{theorem:MHS}). 
In \S \ref{section:threefold}, 
we state the result on the MHS on
the cohomology $H^3(Z_a^{\circ})$ of the open threefold $Z_a^{\circ}$
and its relationship to that on $H^2(\T^2,C_a^{\circ})$ 
(Theorem \ref{thm:relationship}).
The details for the calculation of $H^3(Z_a^{\circ})$ 
are given in \S \ref{section:Threefold}.
In \S \ref{section:Yukawa}, we define the Yukawa coupling 
and study its properties. 
In \S \ref{section:HAE}
we propose a holomorphic anomaly equation for the local B-model.

We note that polyhedra dealt with in
\S \ref{section:Polyhedron} and \S \ref{section:relative}
are convex integral polyhedra,
while in \S \ref{section:threefold}, \S \ref{section:Yukawa} and \S \ref{section:HAE},
only $2$-dimensional reflexive polyhedra are considered.

The examples treated in this article 
are listed in Figure \ref{fig:Hirzebruch}. 
These will be sometimes called the cases
of $\mathbb{P}^2$,
$\mathbb{F}_0$, $\mathbb{F}_1$, $\mathbb{F}_2$
according to their local A-model toric surfaces.
The $\mathbb{P}^2$ case appears in the course of the paper.
The others are summarized in \S \ref{section:Examples}.
%%%%%%%%%%%%%%%%%%%%%%%%%%%%%%%%%%%%%%%%%%%%%%%
\subsection{Notations}
%%%%%%%%%%%%%%%%%%%%%%%%%%%%%%%%%%%%%%%%%%%%%%%
Throughout the paper, we use the following notations. 
$\T^n$ denotes the $n$-dimensional algebraic torus
$(\mathbb{C}^*)^n$. 
For $m=(m_1,\ldots,m_n)\in \mathbb{Z}^n$,
$t^m$ stands for the Laurent monomial
$t_1^{m_1}\cdots t_n^{m_n}\in \mathbb{C}[t_1^{\pm 1},\ldots,t_n^{\pm 1}]$.
For a variable $x$, $\theta_x$ is the 
logarithmic derivative $x{\partial_x}$. 
%%%%%%%%%%%%%%%%%%%%%%%%%%%%%%%%%%%%%%%%%%%%%%%
\subsection{Acknowledgments}
%%%%%%%%%%%%%%%%%%%%%%%%%%%%%%%%%%%%%%%%%%%%%%%
This work was started during Y.K.'s visit at 
Institut des Hautes \'Etudes Scientifiques. She thanks the institute 
for hospitality and support.
She also thanks Maxim Kontsevich, Kyoji Saito, Akihiro Tsuchiya
for valuable comments.
The authors thank Andrea Brini for kindly letting them know
some references on the Yukawa coupling.
Research of Y.K. is partly supported by
Grant-in-Aid for Scientific Research (Start-up 20840024).
S.M. is supported by JSPS/EPDI/IH\'ES
fellowship (2007/2009) and Max-Planck-Instutut f\"ur Mathematik.
%%%%%%%%%%%%%%%%%%%%%%%%%%%%%%%%%%%%%%%%%%%%%%%%%%%
%%%%%%%%%%%%%%%%%%%%%%%%%%%%%%%%%%%%%%%%%%%%%%%%%%%%%%%%%%%%%%%%%%%%%%%%%%%%%%%%%%%%%%%%%%%%%%%%%%%%%%
\section{Preliminaries on the mixed Hodge structures}\label{section:Preliminaries}
%%%%%%%%%%%%%%%%%%%%%%%%%%%%%%%%%%%%%%%%%%%%%%%%%%%%%%%%%%%%%%%%%%%%%%%%%%%%%%%%%%%%%%%%%%%%%%
In this section we recall the mixed Hodge structure 
of Deligne \cite{Deligne2, Deligne3}. 
See also \cite{Voisin, Voisin2} \cite{PS}.
%%%%%%%%%%%%%%%%%%%%%%%%%%%%%%%%%%%%%%%%%%%%%%%
%%%%%%%%%%%%%%%%%%%%%%%%%%%%%%%%%%%%%%%%%%%%%%%

A mixed Hodge structure (MHS) $H$ is the triple  
$H=(H_{\mathbb{Z}}, \mathcal{W}_{\bullet}, \mathcal{F}^{\bullet})$, 
where $H_{\mathbb{Z}}$ is a finitely generated abelian group, 
$\mathcal{W}_{\bullet}$ is an increasing filtration 
(called {\it the weight filtration})
on $H_{\mathbb{Q}}=H_{\mathbb{Z}}\otimes \mathbb{Q}$,  
and $\mathcal{F}^{\bullet}$ is a decreasing filtration 
(called {\it the Hodge filtration})
on $H_{\mathbb{C}}=H_{\mathbb{Z}}\otimes \mathbb{C}$ 
such that for each graded quotient 
$$
\text{Gr}_n^{\mathcal{W}}(H_{\mathbb{Q}})
:=\mathcal{W}_n / \mathcal{W}_{n-1} \, ,
$$  
with respect to 
the weight filtration $\mathcal{W}_{\bullet}$, 
the Hodge filtration $\mathcal{F}^{\bullet}$ induces a decomposition 
$$\text{Gr}_n^{\mathcal{W}}(H_{\mathbb{C}})=\bigoplus_{p+q=n}H^{p, q}\, ,
$$ 
with ${\overline{H^{p, q}}}=H^{q, p}$, where 
$H^{p,q}:=\text{Gr}^p_{\mathcal{F}}\text{Gr}_{p+q}^{\mathcal{W}}(H_{\mathbb{C}})$ 
and the bar denotes the complex conjugation. 
We say that the weight $m \in \mathbb{Z}$ 
occurs in $H$
if $\text{Gr}_m^{\mathcal{W}}\neq 0$, and that 
$H$ is a pure Hodge structure of weight $m$ 
if $m$ is the only weight which occurs in $H$.
By the classical Hodge theory, if $X$ is a compact K\"ahler manifold, 
then $H^k(X)$ carries a pure Hodge structure of weight $k$.

Let $V^{\circ}$ be a smooth open algebraic variety of dimension $n$. 
By Deligne \cite{Deligne2, Deligne3}, there is a canonical MHS on $H^k(V^{\circ})$.  
The weights of $H^k(V^{\circ})$ may occur 
in the range $[k, 2k]$ if $k\leq n$ 
and in $[k, 2n]$ if $k\geq n$.  
The construction goes as follows. 
Take a smooth compactification $V$ of $V^{\circ}$ such that 
the divisor $D=V\setminus V^{\circ}$ is simple normal crossing, 
and consider  
meromorphic
differential forms on $V$ 
which may have logarithmic 
poles along the divisor $D$.  
Then the Hodge filtration is given by the degree of logarithmic forms while 
the weight filtration is given by the pole order. 
The constructed MHS does not depend on the chosen compactification $V$ 
and is functorial, i.e., any morphism $f:V^{\circ} \to U^{\circ}$ 
of varieties induces a morphism 
$f^*: H^*(U^{\circ})\to H^*(V^{\circ})$ of MHS's. 

Let $\iota: V^{\circ} \hookrightarrow U^{\circ}$ be an immersion between 
two smooth open algebraic varieties. 
By \cite{Deligne3}, 
there exists a canonical MHS on the relative cohomology 
$H^k(U^{\circ}, V^{\circ})$. 
The construction is similar to the one above 
(cf.  \cite[Ch. 5]{PS}, \cite[Ch. 8]{Voisin2}). 
The long exact sequence 
\begin{equation}\label{eq:pair}
\cdots \overset{\iota^*}{\longrightarrow} H^{k-1}(V^{\circ}) \longrightarrow H^{k}(U^{\circ},V^{\circ})
\longrightarrow H^{k}(U^{\circ}) 
\overset{\iota^*}{\longrightarrow} H^{k}(V^{\circ}) \longrightarrow \cdots\, , 
\end{equation}
is an exact sequence of MHS's. 
The weights of $H^k(U^{\circ}, V^{\circ})$ may occur 
in $[k-1, 2k]$.

The $m$-th Tate structure $T(m)$ is 
the pure Hodge structure of weight $-2m$ 
on the lattice $(2\pi\sqrt{-1})^{m}\mathbb{Z}\subset \mathbb{C}$ 
which is of type $(-m,-m)$, i.e.,  
$T(m)_{\mathbb{C}}=T(m)^{-m, -m}$.

\begin{example}\label{ex:tate}
The MHS on $H^m(\T^n)$ is $T(-m)^{\oplus \binom nm}$ for $0\leq m\leq n$. 
See \cite[Example 3.9]{Batyrev}.
\end{example} 

\begin{example}\label{ex:curve}
Let $C$ be a smooth projective curve and 
$C^{\circ}=C\setminus D$ be an affine curve, 
where $D=\{p_1, \cdots, p_m\}$ is a set of distinct $m$-points on $C$. 
We describe the MHS on $H^1(C^{\circ})$.  
The weight filtration $\mathcal{W}_{\bullet}$ 
and the Hodge filtration $\mathcal{F}^{\bullet}$ 
are: 
$$
0\subset \mathcal{W}_1= H^1(C)\subset \mathcal{W}_2=H^1(C^{\circ})\, ,
\quad
0 \subset \mathcal{F}^1= H^0(\Omega^1_C(\log D)) \subset \mathcal{F}^0=H^1(C^{\circ})\, ,
$$
where $\Omega^1_C(\log D)$ is the sheaf of logarithmic $1$-forms on $(C, D)$.
The ``Hodge decomposition'' of 
${\rm Gr}_1^{\mathcal{W}}\left(H^1(C^{\circ})\right)
=H^{1,0}\oplus H^{0,1}
$ is the same as that of $H^1(C)$ :
$$
H^{1,0}=H^{1,0}(C)\, ,
\quad
H^{0,1}=H^{0,1}(C)\, .
$$ 
That of ${\rm Gr}_2^{\mathcal{W}}\left(H^1(C^{\circ})\right)
=H^{2,0}\oplus H^{1,1}\oplus H^{0,2}$ is,
$$
H^{1,1}={H^0(\Omega^1_C(\log D))} \big/ {H^0(\Omega_C^1)}\, ,
\quad
H^{2.0}=H^{0,2}=0\, .$$
Hence the ``Hodge numbers'' of $H^1(C^{\circ})$ are:
\begin{equation}\nonumber
\begin{array}{r|rccc}
       &p=0&1 &2\\\hline
q=0&     &g &0\\
1     & g&m-1&\\
2     & 0&       &\\ 
\end{array}~~~~,
\end{equation}
where $g$ is the genus of $C$.
The MHS on $H^1(C^{\circ})$ is an  
extension of $T(-1)^{\oplus(m-1)}$ by 
$H^1(C)$ in the sense of \cite{Carlson}:
$$
0\rightarrow H^1(C) \rightarrow H^1(C^{\circ})\rightarrow
T(-1)^{\oplus (m-1)}
\rightarrow 0\, .
$$
\end{example} 
%%%%%%%%%%%%%%%%%%%%%%%%%%%%%%%%%%%%%%%%%%%%%%%%%%%%%%%%%%%%%%%%%%
\section{Polyhedron, Jacobian ring, $\mathcal{R}_F$ and $A$-hypergeometric system}
\label{section:Polyhedron}
%%%%%%%%%%%%%%%%%%%%%%%%%%%%%%%%%%%%%%%%%%%%%%%%%%%%%%%%%%%%%%%%%%%%%%%%%%%%%%%%%%%%%%%%%%%%%%
A convex integral polyhedron $\Delta\subset  \mathbb{R}^n$ 
is the convex hull of some finite set in $\mathbb{Z}^n$.
The set of integral points in $\Delta$ is denoted by
$A(\Delta)$ and 
its cardinality by $l(\Delta):=\#A(\Delta)$.
%%%%%%%%%%%%%%%%%%%%%%%%%%%%%%%%%%%%%%%%%%%%%%%
\subsection{$\Delta$-regularity}
Let $\Delta$ be an $n$-dimensional integral convex polyhedron.
Equip the ring $\mathbb{C}[t_0,t_1^{\pm 1},\ldots, t_n^{\pm 1}]$ 
with the grading given by $\mathrm{det}\,t_0^k t^m =k$.
Define $\ring_{\Delta}$ to be its subring:
$$
\ring_{\Delta} = \bigoplus_{k\geq 0} \ring_{\Delta}^k~,
\qquad
\ring_{\Delta}^k=\bigoplus_{m\in \Delta(k)}\mathbb{C}\, t_0^k t^m~,
$$
where
\begin{equation}\label{def:ktriangle}
\Delta(k):=\Big\{m\in\mathbb{R}^n \mid \frac{m}{k}\in \Delta\Big\}~(k\geq 1)~,\quad
\Delta(0):=\{0\}\subset \mathbb{R}^n~.
\end{equation}

Recall that the Newton polyhedron of a Laurent polynomial 
$$
F=\sum_{m}a_m t^m\in \mathbb{C}[t_1^{\pm 1},\ldots,t_n^{\pm 1}]
$$
is the convex hull 
of $\{ m\in \mathbb{Z}^n \mid a_m\neq 0\}$
in $\mathbb{R}^n$. Denote by $\mathbb{L}(\Delta)$  
the space of Laurent polynomials whose Newton polyhedra are $\Delta$.

\begin{definition}\label{def:regular}
A Laurent polynomial $F$
is said to be $\Delta$-regular if $F\in \mathbb{L}(\Delta)$ and,  
for every $l$-dimensional 
face $\Delta'\subset \Delta$ ($0<l\leq n$), the equations
$$
F^{\Delta'}:=\sum_{m\in\Delta'\cap \mathbb{Z}^n}a_m t^m~=0~,
\quad
\frac{\partial F^{\Delta'}}{\partial t_1}=0~,\ldots, 
\frac{\partial F^{\Delta'}}{\partial t_n}=0~, 
$$
have no common solutions in $\mathbb{T}^n$.
Denote by $\mathbb{L}_{{\rm reg}}(\Delta)$
the space of $\Delta$-regular Laurent polynomials. 
\end{definition}

\begin{example}
Let $\Delta\subset\mathbb{R}^2$ be the 
polyhedron \#1 in Figure \ref{fig:Hirzebruch}, 
which is the convex hull of $\{ (1,0), (0,1), (-1,-1)\}$.  
Let $F \in \mathbb{L}(\Delta)$ which is of the form:
\begin{equation}\label{def:Fex}
F
=a_1t_1+a_2t_2+\frac{a_3}{t_1t_2}+a_0~,
\quad 
(a_0,a_1,a_2,a_3\in \mathbb{C})~.
\end{equation} 
We wrote $a_{(1,0)}=a_1$, $a_{(0,1)}=a_2$, $a_{(-1,-1)}=a_3$
for simplicity.
Then we have 
\begin{equation}\nonumber 
F\in \mathbb{L}_{{\rm reg}}(\Delta)
\, \Longleftrightarrow \, 
\frac{a_0^3}{a_1a_2a_3}+27\neq 0~,\,
a_1a_2 a_3\neq 0~.
\end{equation}
\end{example}
%%%%%%%%%%%%%%%%%%%%%%%%%%%%%%%%%%%%%%%%%%%%%%%%%%%%%%%%%%%%%%%%%%%%%%%%%%%%%%%%%%%%%%%%%%%%%%%%%%%%%%
\subsection{Jacobian ring, $\mathcal{R}_F$  and filtrations}
For $F\in \mathbb{L}(\Delta)$,
let $\mathcal{D}_i$ ($i=0,\ldots,n$) be the following
differential operators
acting on $\ring_{\Delta}$:
\begin{equation}\label{def:mathcalD}
\mathcal{D}_{0}:=\theta_{t_0}+t_0 F~,\quad
\mathcal{D}_{i}:=\theta_{t_i}+t_0 \theta_{t_i}F~(i=1,\ldots,n)~.
\end{equation}
\begin{definition}
Define $\mathbb{C}$-vector spaces 
$\mathcal{R}_F$ and $\mathcal{R}^+_{F}$ by 
\begin{equation}
\mathcal{R}_F:=\ring_{\Delta}\Big/
\sum_{i=0}^n \mathcal{D}_i \ring_{\Delta}~,\quad 
\mathcal{R}^+_{F}
:=
\mathbf{S}_{\Delta}^+ \left / 
\sum_{i=0}^n \mathcal{D}_i \mathbf{S}_{\Delta} \right. \, , 
\end{equation}
where $\ring_{\Delta}^+=\sum_{k\geq 1}\ring_{\Delta}^k$.
\end{definition}
Obviously, $\mathcal{R}_F=\mathcal{R}_F^+\oplus \ring_{\Delta}^0$.

We consider two filtrations on the vector spaces $\mathcal{R}_F$. 
The $\mathcal{E}$-filtration on $\ring_{\Delta}$ is a decreasing filtration
\begin{equation}\nonumber 
\mathcal{E} :\cdots
\supset \cdots\supset \mathcal{E}^{-k}\supset \cdots 
\mathcal{E}^{-1}\supset \mathcal{E}^{0}\supset\cdots
\end{equation}
where $\mathcal{E}^{-k}$ is the subspace spanned by all monomials
of the degree $\leq k$.
This induces filtrations on $\mathcal{R}_F$ and $\mathcal{R}^+_{F}$ which are denoted by
$\mathcal{E}$ and $\mathcal{E}_+$ respectively.
It holds that $\mathcal{E}^{-n}=\mathcal{R}_{F}$.

\begin{definition}
Let $J_F$ be the ideal in $\mathbf{S}(\Delta)$ generated by
$t_0F, t_0 \theta_{t_1}F,\ldots, t_0\theta_{t_n}F$. 
The Jacobian ring $R_F$ is defined as $\ring_{\Delta}/J_F$.
Denote by $R_F^i$ the $i$-th homogeneous piece of 
$R_F$.
\end{definition}
The  graded quotient  of $\mathcal{R}_{F}$ 
with respect to the $\mathcal{E}$-filtration is given by
the Jacobian ring: 
$$
\mathrm{Gr}^{-i}_{\mathcal{E}}\mathcal{R}_F=R_F^i\, .
$$

Denote by $
I_{\Delta}^{(j)} (0\leq j\leq n+1)$ 
the homogeneous
ideals in $\ring_{\Delta}$ generated as  $\mathbb{C}$-subspaces
by all monomials $t_0^kt^m$ where  
$k\geq 1$ and $m\in \Delta(k)$
which does not belong to any face of codimension $j$.
We set $I_{\Delta}^{(n+2)}=\ring_{\Delta}$.
These form an increasing chain of ideals in $\ring_{\Delta}$:
\begin{equation}\label{eq:idealI}
0=I_{\Delta}^{(0)}\subset I_{\Delta}^{(1)}\subset I_{\Delta}^{(2)}
\subset \cdots \subset I_{\Delta}^{(n+1)}=\ring_{\Delta}^+
\subset I_{\Delta}^{(n+2)}=\ring_{\Delta}\, .
\end{equation}
Let $\mathcal{I}_j\subset \mathcal{R}_F$ be the image of $I_{\Delta}^{(j)}$.
These subspaces define
an increasing filtration $\mathcal{I}$ on
$\mathcal{R}_F$: 
\begin{equation} \nonumber 
0=\mathcal{I}_0\subset \mathcal{I}_1\subset \mathcal{I}_2\subset\cdots
\subset \mathcal{I}_{n+1}=\mathcal{R}_F^+
\subset 
\mathcal{I}_{n+2}=\mathcal{R}_F~.
\end{equation}
Later we will see that $\mathcal{R}_F$ is isomorphic to 
the cohomology mentioned in \S \ref{sec:intro2}.  
The $\mathcal{I}$-(resp. $\mathcal{E}$-)filtration 
describes the weight (resp. Hodge) filtration 
of the MHS on it. 

\begin{example}
Let $\Delta$  be the 
polyhedron $\#$1 in Figure \ref{fig:Hirzebruch}. 
Assume that $F \in \mathbb{L}_{\rm reg}(\Delta)$. 
Then we have
\begin{equation}\nonumber
\mathcal{R}_F\cong \mathbb{C}\,1\oplus\mathbb{C}\,t_0\oplus
\mathbb{C}\,t_0^2~.
\end{equation}
The $\mathcal{I}$- and the $\mathcal{E}$-filtrations are
\begin{equation}\nonumber
\mathcal{I}_3=\mathcal{I}_2=\mathcal{I}_1\cong \mathbb{C}\,t_0\oplus\mathbb{C}\,t_0^2~,\quad
\mathcal{I}_4=\mathcal{R}_F~.
\end{equation}
\begin{equation}\nonumber
\mathcal{E}^0=\mathbb{C}\,1~,\quad
\mathcal{E}^{-1}=\mathbb{C}\,1\oplus\mathbb{C}\,t_0~,\quad
\mathcal{E}^{-2}=\mathcal{R}_F~.
\end{equation}
\end{example}
%%%%%%%%%%%%%%%%%%%%%%%%%%%%%%%%%%%%%%%%%%%%%%%%%%%%%%%%%%%%%%%%%%%%%%%%%%%%%%%%%%%%%%%%%%%%%%
\subsection{Derivations with respect to parameters}
Let ${a}=(a_m)_{m\in A(\Delta)}$ be algebraically independent coefficients.
Consider the $\mathbb{C}[a]$-module
 $\ring_{\Delta}[a]:=\ring_{\Delta}\otimes 
\mathbb{C}[a]$. 
Let
\begin{equation}
\mathcal{R}_F[a]:=\ring_{\Delta}[a]\Big/
\Big(\sum_{i=0}^n \mathcal{D}_i \ring_{\Delta}[a]\Big)~.
\end{equation}
Define the action of differential operators
$\mathcal{D}_{a_m}$ ($m\in A(\Delta)$) on $\ring_{\Delta}[a]$
by
\begin{equation}\nonumber 
\mathcal{D}_{a_m}=\frac{\partial}{\partial a_m}
+t_0 t^m~.
\end{equation}
Since this action commutes with that of 
$\mathcal{D}_{i}$ ($i=0,1,\ldots,n$),
it induces an action of $\mathcal{D}_{a_m}$ on $\mathcal{R}_F[a]$.

We shall see that the operator $\mathcal{D}_{a_m}$ corresponds to 
the Gauss--Manin connection
$\nabla_{a_m}$ on the cohomology of our interest 
(cf. \S \ref{sec:GM}, \S \ref{section:threefold}). 
Note that
$\mathcal{D}_{a_m}$ preserves 
the $\mathcal{I}$-filtration: 
$\mathcal{D}_{a_m}\mathcal{I}^{j}\subset \mathcal{I}^{j}$.
This  corresponds to the fact that the weight filtration
is preserved by the variation of MHS's.
Note also that
$\mathcal{D}_{a_m}$ decreases  $\mathcal{E}$-filtration by one:
$\mathcal{D}_{a_m} \mathcal{E}^{-k}\subset \mathcal{E}^{-k-1}$.
This corresponds 
to the Griffiths transversality \cite{Usui}.

%%%%%%%%%%%%%%%%%%%%%%%%%%%%%%%%%%%%%%%%%%%%%%%%%%%%%%%%%%%%%%%%%%%%%%%%%%%%%%%%%%%%%%%%%%%%%%
\subsection{$A$-hypergeometric system }\label{sec:Ahyper}%
%%%%%%%%%%%%%%%%%%%%%%%%%%%%%%%%%%%%%%
We briefly recall
the $A$-hypergeometric system of Gel'fand--Kapranov--Zelevinsky
\cite{GKZ1} \cite{GKZ2}
in a form suitable to our situation.
Let $\Delta$ be an $n$-dimensional integral convex polyhedron.
For $\Delta$, the lattice of relations is defined by
\begin{equation}\label{eq:lattice-relations}
L(\Delta):=\Big\{
l=(l_m)_{m\in A(\Delta)}\in \mathbb{Z}^{l(\Delta)}\mid
\sum_{m\in A(\Delta)} l_m m=0 ~,~
\sum_{m\in A(\Delta)}l_m=0~
\Big\}~.
\end{equation}
The $A$-hypergeometric system associated to $\Delta$ (with parameters 
$(0, \ldots , 0) \in \mathbb{C}^{n+1}$) is the following system of 
linear differential equations for $\Phi(a)$: 
\begin{equation}\label{Ahyper}
\hyper_i\Phi(a)=0~ (i=0,1,\ldots,n)~,\quad
\square_{\,l}\Phi(a)=0~(l\in L(\Delta))~,
\end{equation}
where
\begin{equation}\nonumber
\begin{split}
&\hyper_0=\sum_{m\in A(\Delta)}\theta_{a_m}~,
\quad
\hyper_i=\sum_{m\in A(\Delta)}m_i\theta_{a_m}
\quad(1\leq i\leq n)~,
\\
&\square_{\,l}=\prod_{\begin{subarray}{c}m\in A(\Delta);\\
 l_m>0\end{subarray}}
\partial_{a_m}^{\,l_m}
 -\prod_{\begin{subarray}{c}m\in A(\Delta);\\l_m<0\end{subarray}}
\partial_{a_m}^{-l_m}~.
\end{split}
\end{equation}
The number of independent solutions
is equal to the volume\footnote{
Here the volume is normalized so that the fundamental simplex in $\mathbb{R}^n$ 
has volume one.}
of the polyhedron $\Delta$
\cite{GKZ1}. 

\begin{example}\label{example:P2Ahyper}
In the case when $\Delta$ is the 
polyhedron $\#$1 in Figure \ref{fig:Hirzebruch},
the lattice of relations $L(\Delta)$ has rank one
and generated by $(-3,1,1,1)$.
For simplicity we write $a_{(1,0)}=a_1$, $a_{(0,1)}=a_2$, $a_{(-1,-1)}=a_3$.
The $A$-hypergeometric system is 
\begin{equation} \nonumber
\begin{split}
&(\theta_{a_1}-\theta_{a_3})\Phi(a)=0~,\quad
(\theta_{a_2}-\theta_{a_3})\Phi( a)=0~,\\
&(\theta_{a_0}+\theta_{a_1}+\theta_{a_2}+\theta_{a_3})\Phi( a)=0~,\\
&(\partial_{a_1}\partial_{a_2}\partial_{a_3}-\partial_{a_0}^3)\Phi(a)=0~.
\end{split}
\end{equation}
It is equivalent to $\Phi(a)=f(z)$, $z=\frac{a_1a_2a_3}{a_0^3}$ and
\begin{equation}\nonumber 
\big[\theta_{z}^3+3{z}\,\theta_{z}(3\theta_{z}+1)(3\theta_{z}+2)\big]f(z)=0~.
\end{equation}
Solutions about $z=0$ are obtained in \cite[eq.(6.22)]{CKYZ}:
\begin{equation}\label{eq:PFsolution}
\begin{split}
&\varpi(z;0)=1~,\quad
t:=\partial_{\rho}\varpi(z;\rho)|_{\rho=0}=\log z+3H(z)~,
\\
&\partial_S F=\partial_{\rho}^2\varpi(z;\rho)|_{\rho=0}=(\log z)^2+\cdots
\end{split}
\end{equation}
where
$$
\varpi(z;\rho)=\sum_{n\geq 0}\frac{(3\rho)_{3n}}
{(1+\rho)_n^3}(-1)^{n}z^{n+\rho}~,\quad
H(z)=\sum_{n\geq 1} \frac{(3n-1)!}{n!^3}(-z)^n~.
$$
Here  $(\alpha)_n$ denotes the Pochhammer symbol:
$(\alpha)_n
=(\alpha)(\alpha+1)\cdots(\alpha+n-1)$ for $n\geq 1$,
$(\alpha)_n=1$ for $n=0$. 
\end{example}

\begin{proposition}\label{prop:Ahyper}
1. For each $F\in \mathbb{L}_{{\rm reg}}(\Delta)$, $\mathcal{R}_F$ is spanned by
$\mathcal{D}_{a_{m_1}}\cdots \mathcal{D}_{a_{m_k}}\,1$
$(0\leq k\leq n$, $m_1,\ldots, m_k\in A(\Delta))$.
\\ 
2. 
In
$\mathcal{R}_F[a]$, the element $1$ satisfies the
$A$-hypergeometric system \eqref{Ahyper} 
with $\partial_{a_m}$ $(m\in A(\Delta))$ replaced by
$\mathcal{D}_{a_m}$.
\end{proposition}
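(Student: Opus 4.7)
The proof splits cleanly into a direct verification (Part 2) and a spanning argument (Part 1), both resting on the same computational identity. First I would establish by induction on $k$ the key identity
\[
\mathcal{D}_{a_{m_1}}\cdots\mathcal{D}_{a_{m_k}}\cdot 1 \;=\; t_0^{k}\,t^{m_1+\cdots+m_k}
\]
in $\mathbf{S}_\Delta[a]$, hence also in $\mathcal{R}_F[a]$. The induction is immediate because at every intermediate step the expression is a single monomial in $t_0,t_1^{\pm1},\ldots,t_n^{\pm1}$ independent of the $a_m$'s, so the $\partial/\partial a_{m_j}$ part of $\mathcal{D}_{a_{m_j}}$ kills it and only the multiplicative part $t_0 t^{m_j}$ contributes.

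Using this identity, Part 2 is a direct check. After the replacement $\partial_{a_m}\to \mathcal{D}_{a_m}$, the Euler operator $\mathcal{T}_0$ applied to $1$ gives $\sum_m a_m t_0 t^m = t_0 F$, which is exactly $\mathcal{D}_0\cdot 1$ and so vanishes in $\mathcal{R}_F[a]$; similarly $\mathcal{T}_i\cdot 1 = t_0\theta_{t_i}F = \mathcal{D}_i\cdot 1 \equiv 0$ for $i\geq 1$. For the box operator $\square_l$ with $l\in L(\Delta)$, both $\prod_{l_m>0}\mathcal{D}_{a_m}^{l_m}\cdot 1$ and $\prod_{l_m<0}\mathcal{D}_{a_m}^{-l_m}\cdot 1$ produce the same monomial $t_0^{k}\,t^{v}$ with $k=\sum_{l_m>0}l_m$ and $v=\sum_{l_m>0}l_m\, m$, thanks to the defining conditions $\sum l_m=0$ and $\sum l_m m=0$ of $L(\Delta)$; hence their difference, $\square_l\cdot 1$, vanishes.

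Part 1 is the harder direction. The identity shows that the claimed spanning set consists of all monomials $t_0^k t^v$ with $v$ in the $k$-fold Minkowski sumset of $A(\Delta)$ and $0\leq k\leq n$. The problem reduces to (i) $\mathcal{R}_F=\mathcal{E}^{-n}$ (no contributions from degrees $>n$), which follows from $\Delta$-regularity and the finiteness of the Jacobian ring, and (ii) each graded piece $R_F^k=\mathrm{Gr}^{-k}_{\mathcal{E}}\mathcal{R}_F$ for $1\leq k\leq n$ is spanned by $k$-fold products of elements of $R_F^1$. The main obstacle is (ii): for a general integral polyhedron not every lattice point of $k\Delta$ decomposes as a sum of $k$ points of $A(\Delta)$, so the semigroup ring $\mathbf{S}_\Delta$ itself need not be generated in degree one. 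My plan is to exploit the relation $\mathcal{D}_0\cdot t_0^{k-1}t^{v'}\equiv 0$, which reads
\[
\sum_{m\in A(\Delta)} a_m\,t_0^{k}\,t^{v'+m}\;\equiv\;-(k-1)\,t_0^{k-1}t^{v'}\,,
\]
together with the analogous identities coming from $\mathcal{D}_i$ for $i\geq 1$, and use them to rewrite, inductively, any offending monomial $t_0^k t^v$ with $v\notin k\cdot A(\Delta)$ as a combination of sums-of-$A(\Delta)$ monomials plus monomials of strictly lower $t_0$-degree. That the resulting systems of linear relations are non-degenerate, so the rewriting actually terminates, is precisely where the $\Delta$-regularity of $F$ (equivalently, the Koszul-type exactness of the ideal $J_F$) enters.
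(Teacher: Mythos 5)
Your key identity $\mathcal{D}_{a_{m_1}}\cdots\mathcal{D}_{a_{m_k}}1=t_0^k\,t^{m_1+\cdots+m_k}$ and your Part 2 coincide with the paper's proof: the Euler operators applied to $1$ give exactly $\mathcal{D}_i\cdot 1$, and the two halves of each box operator produce the same monomial because $\sum_m l_m=0$ and $\sum_m l_m m=0$. For Part 1 the paper's argument is a one-liner: the monomials $t_0^k t^{m_1+\cdots+m_k}$ span all of $\mathbf{S}_{\Delta}$, and $\mathcal{R}_F=\mathcal{E}^{-n}$, so the images of those with $k\leq n$ span $\mathcal{R}_F$. The first claim is precisely the integer decomposition property you flag, and for the polytopes the paper actually uses downstream ($n=2$, indeed $2$-dimensional reflexive) it is automatic: every lattice point of $k\Delta$ for a lattice polygon $\Delta$ is a sum of $k$ lattice points of $\Delta$. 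In that case no rewriting is needed and your reduction already closes the proof.

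The genuine gap is in your proposed completion for general $n$. The non-degeneracy you invoke (``the resulting systems of linear relations are non-degenerate, so the rewriting actually terminates'') is not proved, and it fails in dimension $\geq 3$. Take $\Delta=\mathrm{conv}\{(0,0,0),(1,1,0),(1,0,1),(0,1,1)\}$ and a generic $F=a_0+a_1t_1t_2+a_2t_1t_3+a_3t_2t_3$, which is $\Delta$-regular. Here $A(\Delta)$ consists only of the four vertices, the four elements $t_0F,\,t_0\theta_{t_1}F,\,t_0\theta_{t_2}F,\,t_0\theta_{t_3}F$ already span $\mathbf{S}_{\Delta}^1$, so $R_F^1=0$ and every degree-one monomial is killed in $\mathcal{R}_F$; consequently every $\mathcal{D}_{a_{m_1}}\cdots\mathcal{D}_{a_{m_k}}1$ with $k\geq 1$ vanishes in $\mathcal{R}_F$. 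Yet $\dim R_F^2=1$, generated by $t_0^2t_1t_2t_3$, since $(1,1,1)\in 2\Delta$ is not a sum of two points of $A(\Delta)$ --- and it is not even of the form $v'+m$ with $v'\in\Delta\cap\mathbb{Z}^3$ and $m\in A(\Delta)$, so your relation $\mathcal{D}_0\cdot t_0^{k-1}t^{v'}\equiv 0$ never reaches it. No rewriting scheme based on the $\mathcal{D}_i$ can therefore succeed, and $\Delta$-regularity does not rescue the statement. The honest conclusion is that Part 1 (and the paper's one-line proof of it) implicitly assumes $\Delta$ is integrally closed in degrees $\leq n$, which holds for $n\leq 2$; you should either add that hypothesis or restrict to the $2$-dimensional case, rather than attempt the rewriting argument.
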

\begin{proof}
1. This follows because
$\ring_{\Delta}$ is spanned by monomials obtained by
successive applications of $\mathcal{D}_{a_m}$ to $1$
and because $\mathcal{R}_{F}=\mathcal{E}^{-n}$.
\\
2. 
In the ring $\ring_{\Delta}[a]$, it holds that
\begin{equation}
\begin{split}
&
\Big(\hyper_i|_{\theta_{a_m}\to a_m\mathcal{D}_{a_m}} \Big)\,1
=\mathcal{D}_i\,1 \quad (0\leq i\leq n)~,
\\&
\Big(\square_{\,l}|_{\partial_{a_m}\to \mathcal{D}_{a_m}}\Big)\,1=
\prod_{m: l_m>0}(t^m)^{l_m}-
\prod_{m:l_m<0}(t^m)^{-l_m}=0~.
\end{split}
\end{equation}
\end{proof}
%%%%%%%%%%%%%%%%%%%%%%%%%%%%%%%%%%%%%%%%%%%%%%%%%%%%%%%%%%%%%%%%%%%%%%%%%%%%%%%%%%%%%%%%%%%%%%%%%%%%%%%%%%%%%%%%%%%%%%%%%%%%%%%%%%%%%%%%%%%%%
\subsection{Reflexive polyhedra}\label{sec:reflexive}
Recall from \cite[\S 4]{Batyrev-mirror} the following
\begin{definition} 
An $n$-dimensional convex integral 
polyhedron $\Delta \subset \mathbb{R}^n$ is reflexive  
if $0\in \Delta$ and the distance between $0$ and the hypersurface generated by 
each codimension-one face $\Delta'$ is equal to $1$, i.e., for each codimension-one
face $\Delta'$ of $\Delta$, there exists an integral primitive vector $v_{\Delta'}\in \mathbb{Z}^n$ 
such that $\Delta'=\{m\in \Delta\mid \langle v_{\Delta'}, m\rangle=1\}$.
\end{definition}

In the case when $n=2$,  
it is known that there are exactly sixteen $2$-dimensional 
reflexive polyhedra (see \cite[Fig.1]{CKYZ}).
Let $\Delta$ be a $2$-dimensional reflexive polyhedron 
and $F \in \mathbb{L}_{\rm reg}(\Delta)$. 
Then it is known that
\begin{equation}\nonumber 
\dim \mathrm{Gr}_{\mathcal{E}}^{-k}\mathcal{R}_F=\dim R_F^k=\begin{cases} 1 &(k=0,2)\, ,\\
l(\Delta)-3 &(k=1)\, ,\\
0&(k\geq 3).\end{cases}
\end{equation}
See  Theorem 4.8 in \cite{Batyrev}. 
The $\mathcal{I}$- and the $\mathcal{E}$-filtrations 
on the vector space $\mathcal{R}_{F}$ are described as follows. 
Let $A'(\Delta)=A(\Delta)\setminus \{0, m^{(1)}, m^{(2)},m^{(3)} \}$
where $m^{(1)},m^{(2)},m^{(3)}$  are
any three vertices of $\Delta$.
Then we have
\begin{equation}\label{reflexiveRF}
\begin{split}
\mathcal{R}_{F}=\mathcal{I}_4&=\mathcal{E}^{-2}
\cong \mathbb{C}1\oplus
\bigoplus_{m\in A'(\Delta)}\mathbb{C}t_0t^m \oplus 
\mathbb{C}t_0\oplus \mathbb{C}t_0^2~,
\\
\mathcal{I}_1&\cong \mathbb{C}t_0\oplus \mathbb{C}t_0^2~,
\quad
\mathcal{I}_3\cong\mathcal{I}_1\oplus \bigoplus_{m\in A'(\Delta)}\mathbb{C}t_0t^m~,
\\
\mathcal{E}^{0}&=\mathbb{C}1~,\quad
\mathcal{E}^{-1}\cong\mathcal{E}^0\oplus 
\bigoplus_{m\in A'(\Delta)}\mathbb{C}t_0t^m\oplus \mathbb{C}t_0~.
\end{split}
\end{equation}
As to $\mathcal{I}_2$, it depends on the polyhedron $\Delta$.
For example, $\mathcal{I}_2=\mathcal{I}_1$ for the
polyhedra $\#2,\#3$ in Figure \ref{fig:Hirzebruch}
while $\mathcal{I}_2=\mathcal{I}_3$ for the polyhedra $\# 4$.
See \S \ref{section:Examples}.

For a $2$-dimensional reflexive polyhedron $\Delta$,
there are $l(\Delta)-1$ independent solutions to the $A$-hypergeometric system 
associated to $\Delta$. Explicit expressions for them can be found 
in \cite[eq.(6.22)]{CKYZ}.

%%%%%%%%%%%%%%%%%%%%%%%%%%%%%%%%%%%%%%%%%%%%%%%
%%%%%%%%%%%%%%%%%%%%%%%%%%%%%%%%%%%%%%%%%%%%%%%%%%%%%%%%%%%%%%%%%%%%%%%%%%%%%%%%%%%%%%%%%%%%%%%%%%%%%%
\section{Mixed Hodge structures on $H^n(\mathbb{T}^n, V_a^{\circ})$}
\label{section:relative}
%%%%%%%%%%%%%%%%%%%%%%%%%%%%%%%%%%%%%%%%%%%%%%%
Let $\Delta \subset \mathbb{R}^n$ be an 
$n$-dimensional convex integral polyhedron 
and $F_a 
=\sum a_m t^m
\in \mathbb{L}_{\mathrm{reg}}(\Delta)$. 
We denote by $V_a^{\circ}$ the smooth affine hypersurface in $\mathbb{T}^n$ 
defined by $F_a$. 
We state the result on 
the (V)MHS on $H^n(\mathbb{T}^n, V_a^{\circ})$
due to Batyrev \cite{Batyrev} and Stienstra \cite{Stienstra}. 
We remark that $H^k(\mathbb{T}^n, V_a^{\circ})=0$ if $k \neq n$ 
(cf. \cite[Theorem 3.4]{Batyrev}).
%%%%%%%%%%%%%%%%%%%%%%%%%%%%%%%%%%%%%%%%%%%%%%%%%%%

The cokernel of the pull-back 
$H^{n-1}(\mathbb{T}^n) \to H^{n-1}(V_a^{\circ})$ 
is called the primitive part of the cohomology of $V_a^{\circ}$ 
and denoted by $PH^{n-1}(V_a^{\circ})$. 
From the long exact sequence (\ref{eq:pair}),  
we obtain the following short exact sequence of MHS's:
\begin{equation}\label{eq:ext}
0 \longrightarrow PH^{n-1}(V_a^{\circ})
\longrightarrow H^n(\mathbb{T}^n, V_a^{\circ})
\longrightarrow H^n(\mathbb{T}^n)
\longrightarrow 0\, .
\end{equation}
Recall that the MHS on $H^n(\mathbb{T}^n)$ is the Tate structure $T(-n)$
(cf. Example \ref{ex:tate}). 
Therefore $H^n(\mathbb{T}^n, V_a^{\circ})$ is an extension of 
$T(-n)$ by $PH^{n-1}(V_a^{\circ})$. 
%%%%%%%%%%%%%%%%%%%%%%%%%%%%%%%%%%%%%%%%%%%%%%%%%%%%%%%%%%%%%%%%%%%%%%%%%%%%%%%%%%%%%%%%%%%%%%
\subsection{MHS on the primitive part $PH^{n-1}(V_a^{\circ})$}
Let $R^+: S_{\Delta}^+ \to \Gamma\Omega^{n-1}_{V_a^{\circ}}$ be the linear map given by
$$R^+(t_0^k t^m) ={\mathrm{Res}}_{V_a^{\circ}}
\left( \frac{(-1)^{k}(k-1)! \cdot t^m}{F^k} \omega_0\right)\,, 
\quad 
\omega_0:=
\frac{dt_1}{t_1}\wedge \cdots \wedge \frac{dt_n}{t_n}\,.
$$
\begin{theorem}[Batyrev]\label{thm:batyrev}
(i) $R^+$ induces an isomorphism 
\begin{equation} \nonumber 
\rho^+: 
\mathcal{R}^+_{F_a}
\overset{\cong}{\longrightarrow} PH^{n-1}(V_a^{\circ})\,.
\end{equation}
(ii) 
Let $\mathcal{W}_{\bullet}^+$
be the weight filtration on $PH^{n-1}(V_a^{\circ})$. 
Then, for $0< i \leq n-1$, we have 
$$
\rho^+(\mathcal{I}_i)=\mathcal{W}_{n-2+i}^+\, .
$$
(iii) 
Let $\mathcal{F}^{\bullet}_{+}$ be the Hodge filtration on $PH^{n-1}(V_a^{\circ})$.
Then, for $0\leq i \leq n-1$, we have
$$\rho^+(\mathcal{E}^{i-n}_+)=\mathcal{F}^i_+\, .$$
\end{theorem}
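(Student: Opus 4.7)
The plan is to realize $\rho^+$ as a rescaled Poincar\'e residue and then to exploit the Griffiths--Dolgachev--Batyrev dictionary between Jacobian-type quotients and cohomology of smooth affine hypersurfaces in toric varieties. By the Gysin sequence for $V_a^{\circ} \hookrightarrow \mathbb{T}^n$, the primitive part $PH^{n-1}(V_a^{\circ})$ is identified with the cokernel of $H^n(\mathbb{T}^n) \to H^n(\mathbb{T}^n \setminus V_a^{\circ})$, and the latter is computed by meromorphic $n$-forms on $\mathbb{T}^n$ with poles of arbitrary order along $V_a^{\circ}$. Up to the sign $(-1)^k(k-1)!$, the map $R^+$ sends $t_0^k t^m$ to the class of $t^m\, \omega_0/F_a^k$, so $\rho^+$ is a rescaled residue of a polar form of pole order $k$.

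For the well-definedness in (i), one checks directly that $R^+(\mathcal{D}_0\,t_0^k t^m)=0$ in cohomology (a telescoping cancellation between the monomial part and the $F$ part) and that $R^+(\mathcal{D}_i\,t_0^k t^m)$, $1\leq i\leq n$, is exact, using the standard identity obtained by taking $d$ of $t^m F_a^{-k}$ contracted against $\omega_0$ and invoking Griffiths' pole-reduction. Hence $R^+$ descends to $\rho^+$ on $\mathcal{R}^+_{F_a}$. Surjectivity follows because every class in $H^n(\mathbb{T}^n\setminus V_a^{\circ})$ admits a representative $g\,\omega_0/F_a^k$ with $g$ a Laurent polynomial whose Newton polyhedron lies in $k\Delta$. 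Injectivity is a dimension count: the Hodge--Deligne numbers of $V_a^{\circ}$ are computed combinatorially from $\Delta$ by Danilov--Khovanskii for $\Delta$-regular $F_a$, and match the dimensions of the graded pieces of $\mathcal{R}^+_{F_a}$ with respect to the $\mathcal{E}_+$-filtration.

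Part (iii) is the classical Griffiths filtration-by-pole-order: a class with a representative of pole order $\leq n-i$ along $V_a^{\circ}$ lies in $\mathcal{F}^i_+$. Since $\mathcal{E}^{i-n}_+$ is, by definition, spanned by the images of monomials $t_0^k t^m$ with $k\leq n-i$, the inclusion $\rho^+(\mathcal{E}^{i-n}_+) \subset \mathcal{F}^i_+$ is immediate from the shape of $R^+$. Equality then follows from the same Hodge--Deligne dimension check together with the surjectivity of Griffiths' reduction of pole modulo exact forms, which guarantees that every element of $\mathcal{F}^i_+$ is represented by a form of pole order at most $n-i$.

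Part (ii) is the heart of the argument and, in my view, the principal difficulty. Fix a smooth projective toric compactification $\mathbb{P}$ of $\mathbb{T}^n$ refining the normal fan of $\Delta$, and let $\overline{V}_a$ be the closure of $V_a^{\circ}$ in $\mathbb{P}$, chosen so that $D:=\overline{V}_a\setminus V_a^{\circ}$ is simple normal crossing. Deligne's weight filtration on $PH^{n-1}(V_a^{\circ})$ is then encoded by the pole-order filtration on $\Omega^{\bullet}_{\overline{V}_a}(\log D)$. The crucial observation is that the residue of $t^m\,\omega_0/F_a^k$ along the toric boundary component attached to a face $\Delta'\subset \Delta$ is controlled by whether $m/k$ lies on $\Delta'$ or not: monomials $t_0^k t^m$ supported in faces of codimension $\geq j$ yield classes with at most $j$ simultaneous logarithmic poles along $D$, and hence lie in $\mathcal{W}^+_{n-2+j}$. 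The hard part is to pin down this compatibility precisely at the level of the logarithmic de Rham complex, by induction on the stratification of $\mathbb{P}$ by toric orbits, and then to close the argument by comparing $\dim \mathrm{Gr}^{\mathcal{I}}_i \mathcal{R}^+_{F_a}$ with $\dim \mathrm{Gr}^{\mathcal{W}^+}_{n-2+i} PH^{n-1}(V_a^{\circ})$ via the Danilov--Khovanskii formula, which forces the two filtrations to coincide.
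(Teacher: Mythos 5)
The paper itself contains no proof of this theorem: it is imported verbatim from Batyrev's Duke paper \cite{Batyrev} and only stated as known input, so the real comparison is with Batyrev's original argument. Your outline is indeed a reconstruction of that argument — residue representatives $t^m\omega_0/F_a^k$, Griffiths pole-order reduction for the Hodge filtration, Danilov--Khovanskii for the dimension counts, and a toric compactification for the weight filtration — and at the level of strategy it is the right one. However, two of your steps are wrong as stated, and the core of part (ii) is deferred rather than argued.

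First, in (i) the identification $PH^{n-1}(V_a^{\circ})\cong \mathrm{coker}\bigl(H^n(\mathbb{T}^n)\to H^n(\mathbb{T}^n\setminus V_a^{\circ})\bigr)$ is false. The Gysin sequence gives a surjection $\mathrm{Res}:H^n(\mathbb{T}^n\setminus V_a^{\circ})\twoheadrightarrow H^{n-1}(V_a^{\circ})$ (because $H^{n+1}(\mathbb{T}^n)=0$), so that cokernel surjects onto the \emph{full} $H^{n-1}(V_a^{\circ})$, not its primitive part; one must further quotient by the classes whose residues lie in $\iota^*H^{n-1}(\mathbb{T}^n)$. A sanity check: for $n=1$ and $\Delta=[0,d]$ the cokernel has dimension $d$ while $PH^0(V_a^{\circ})$ has dimension $d-1$. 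This extra quotient is exactly what the operator $\mathcal{D}_0$ accounts for in the definition of $\mathcal{R}^+_{F_a}$, and if the bookkeeping is off by this one-dimensional (per weight) discrepancy, your injectivity-by-dimension-count does not close. Second, in (ii) you have the face/weight correspondence reversed: by definition $I_{\Delta}^{(j)}$ is spanned by monomials $t_0^kt^m$ with $m/k$ \emph{not} lying on any codimension-$j$ face, so $\mathcal{I}_1$ consists of the \emph{interior} monomials and maps onto the \emph{lowest} weight piece $\mathcal{W}^+_{n-1}$, the image of the cohomology of a smooth compactification (cf. \eqref{reflexiveRF}, where $\mathcal{I}_1\cong\mathbb{C}t_0\oplus\mathbb{C}t_0^2$ corresponds to $H^1(C_a)$); your sentence asserts that monomials on high-codimension faces give low weight, which is the opposite. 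Finally, the statement you call ``the hard part'' of (ii) — establishing the inclusion $\rho^+(\mathcal{I}_i)\subseteq\mathcal{W}^+_{n-2+i}$ at the level of the logarithmic de Rham complex of the compactification — is essentially the entire content of (ii); the Danilov--Khovanskii dimension comparison only upgrades that inclusion to an equality once it has actually been proved, so as written the argument for (ii) is not a proof.
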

%%%%%%%%%%%%%%%%%%%%%%%%%%%%%%%%%%%%%%%%%%%%%%%%%%%%%%%%%%%%%%%%%%%%%%%%%%%%%%%%%%%%%%%%%%%%%%
\subsection{MHS on the middle relative cohomology 
$H^n(\mathbb{T}^n, V_a^{\circ})$}
%%%%%%%%%%%%%%%%%%%%%%%%%%%%%%%%%%%%%%%%%%%%%%%
Let $R^0 : S_{\Delta}^0 \to \Gamma\Omega_{\mathbb{T}^n}^n$ 
be the linear map given by $R^0(1)=\omega_0$.
Consider the map 
$R:=R^+ \oplus R^0 : 
\mathbf{S}_{\Delta} \to \Gamma\Omega_{\mathbb{T}^n}^n \oplus  \Gamma\Omega^{n-1}_{V_a^{\circ}}$.
Then the following theorem 
follows from Theorem \ref{thm:batyrev} (cf. \cite[Theorem 7]{Stienstra}).

\begin{theorem}[Batyrev, Stienstra]\label{theorem:MHS}
(i) $R$ induces an isomorphism 
\begin{equation}\nonumber 
\rho: 
\mathcal{R}_{F_a}
\overset{\cong}{\longrightarrow} H^{n}(\mathbb{T}^n, V_a^{\circ})\,.
\end{equation}
(ii) 
Let $\mathcal{W}_{\bullet}$
be the weight filtration on $H^{n}(\mathbb{T}^n, V_a^{\circ})$. 
Then we have 
$$
\rho(\mathcal{I}_i)=\mathcal{W}_{n-2+i}\quad(0< i \leq n-1)~, 
\quad 
\rho(\mathcal{I}_{n+1})= \mathcal{W}_{2n-2}=\mathcal{W}_{2n-1}~,
\quad
H^n(\T^n,V_a^{\circ})=\mathcal{W}_{2n}~.
$$
(iii) 
Let $\mathcal{F}^{\bullet}$ be the Hodge filtration on $H^{n}(\mathbb{T}^n, V_a^{\circ})$.
Then, for $0\leq i \leq n$, we have
$$\rho(\mathcal{E}^{i-n})=\mathcal{F}^i\, .$$
\end{theorem}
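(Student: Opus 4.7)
The plan is to deduce Theorem \ref{theorem:MHS} from Batyrev's Theorem \ref{thm:batyrev} by combining the five-lemma with strictness of morphisms of MHS applied to the short exact sequence \eqref{eq:ext}.

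First, I would identify the map $\rho^0 : \mathbf{S}_\Delta^0 \to H^n(\T^n)$ induced by $R^0$. Since $\mathbf{S}_\Delta^0 = \mathbb{C}\cdot 1$ and $\rho^0(1) = [\omega_0]$ is a generator of $H^n(\T^n) \cong T(-n)$ (Example \ref{ex:tate}), the map $\rho^0$ is an isomorphism. The decomposition $\mathcal{R}_{F_a} = \mathcal{R}^+_{F_a} \oplus \mathbf{S}_\Delta^0$ fits into a split short exact sequence, which I would compare with \eqref{eq:ext} via $\rho = \rho^+ \oplus \rho^0$. Commutativity of the left square is immediate from $R = R^+ \oplus R^0$. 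Commutativity of the right square amounts to checking that the class of the pair $(\omega_0, 0)$ in $H^n(\T^n, V_a^\circ)$ is sent to $[\omega_0] \in H^n(\T^n)$ by the forgetful map in \eqref{eq:pair}; this is immediate from the cone description of relative cohomology. Part (i) then follows from the five-lemma, since $\rho^+$ is an isomorphism by Theorem \ref{thm:batyrev} and $\rho^0$ is.

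For part (ii), observe that $I_\Delta^{(j)} \subset \mathbf{S}_\Delta^+$ for $j \leq n+1$, so $\mathcal{I}_i \subset \mathcal{R}^+_{F_a}$ for $0 < i \leq n+1$ and $\rho(\mathcal{I}_i) = \rho^+(\mathcal{I}_i)$. Since \eqref{eq:ext} is exact in the category of MHS's and $H^n(\T^n) = T(-n)$ is pure of weight $2n$, strictness of the weight filtration yields $\mathcal{W}_k H^n(\T^n, V_a^\circ) = \mathcal{W}_k^+ PH^{n-1}(V_a^\circ)$ for all $k \leq 2n-1$, together with $\mathcal{W}_{2n} H^n(\T^n, V_a^\circ) = H^n(\T^n, V_a^\circ)$. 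For $0 < i \leq n-1$, combining this with Batyrev's formula $\rho^+(\mathcal{I}_i) = \mathcal{W}_{n-2+i}^+$ gives the first claim. For $i = n+1$, $\mathcal{I}_{n+1} = \mathcal{R}^+_{F_a}$, so $\rho(\mathcal{I}_{n+1}) = PH^{n-1}(V_a^\circ) = \mathcal{W}_{2n-2}^+$ since the weights of $PH^{n-1}(V_a^\circ)$ lie in $[n-1, 2n-2]$; strictness then gives $\mathcal{W}_{2n-2} = \mathcal{W}_{2n-1}$.

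For part (iii), writing $\mathcal{E}^{i-n} = \mathbf{S}_\Delta^0 \oplus \mathcal{E}_+^{i-n}$ for $0 \leq i \leq n$, the inclusion $\rho(\mathcal{E}^{i-n}) \subset \mathcal{F}^i$ follows from $[\omega_0] \in \mathcal{F}^n \subset \mathcal{F}^i$ and from Batyrev's identity $\rho^+(\mathcal{E}_+^{i-n}) = \mathcal{F}_+^i$. The reverse inclusion is then a dimension count: strictness of $\mathcal{F}^\bullet$ with respect to \eqref{eq:ext} gives $\dim \mathcal{F}^i H^n(\T^n, V_a^\circ) = \dim \mathcal{F}_+^i + \dim \mathcal{F}^i H^n(\T^n)$, the last summand equaling $1$ for $i \leq n$ by the Hodge type $(n,n)$ of $T(-n)$, and this matches $\dim \mathcal{E}^{i-n} = \dim \mathcal{E}_+^{i-n} + 1$. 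The main obstacle is not really hard but rather conceptual: it consists in verifying the right-square commutativity by tracing the concrete representative $(\omega_0, 0)$ through the long exact sequence of the pair, after which everything reduces cleanly to Batyrev's theorem via strictness and the five-lemma.
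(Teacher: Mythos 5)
Your argument is correct and follows exactly the route the paper intends: the paper's ``proof'' is the single assertion that Theorem \ref{theorem:MHS} follows from Theorem \ref{thm:batyrev} (citing Stienstra), and your deduction via the split sequence $\mathcal{R}_{F_a}=\mathcal{R}^+_{F_a}\oplus\mathbf{S}_\Delta^0$, the exact sequence \eqref{eq:ext}, the five-lemma, and strictness of MHS morphisms for both filtrations is the natural way to fill in that one-line claim. All the individual steps (the purity of $T(-n)$ forcing $\mathcal{W}_k=\mathcal{W}_k^+$ for $k<2n$, the weight range $[n-1,2n-2]$ of $PH^{n-1}(V_a^\circ)$, and the dimension count for $\mathcal{F}^\bullet$) check out.
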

%%%%%%%%%%%%%%%%%%%%%%%%%%%%%%%%%%%%%%%%%%%%%%%%%%%%%%%%%%%%%%%%%%%%%%%%%%%%%%%%%%%%%%%%%%%%%%
\subsection{Gauss--Manin connection on $H^n(\mathbb{T}^n, V_a^{\circ})$}
\label{sec:GM}
%%%%%%%%%%%%%%%%%%%%%%%%%%%%%%%%%%%%%%%%%%%%%%%
Consider the variation of MHS on  $H^n(\mathbb{T}^n, V_a^{\circ})$
over $\mathbb{L}_{\mathrm{reg}}(\Delta)$. 
It was studied by Stienstra \cite[\S 6]{Stienstra}.
\begin{lemma}[Stienstra]\label{prop:GMrelative}
The Gauss--Manin connection $\nabla_{\frac{\partial}{\partial a_i}}$ on 
$H^n(\mathbb{T}^n, V_a^{\circ})$ corresponds to 
the operator $\mathcal{D}_{a_i}$ on $\mathcal{R}_F[\mathbf{a}]$.  
\end{lemma}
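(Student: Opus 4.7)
The plan is to verify the identity $\nabla_{\partial/\partial a_i}\circ\rho = \rho\circ\mathcal{D}_{a_i}$ by evaluating both sides on a set of $\mathbb{C}[a]$-module generators of $\mathcal{R}_{F_a}[a]$, namely the monomials $t_0^k t^m \in \ring_\Delta$, and then extending by linearity. Since $\mathcal{D}_{a_i}=\partial/\partial a_i+t_0t^{m_i}$, there are two essentially different cases: $s=t_0^kt^m$ with $k\geq 1$ (landing in $PH^{n-1}(V_a^\circ)$) and $s=1$ (landing on the Tate summand).

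For $s=t_0^kt^m$ with $k\geq 1$, the class $\rho(s)\in PH^{n-1}(V_a^\circ)$ is represented by $R^+(s)=\mathrm{Res}_{V_a^\circ}\!\bigl(\tfrac{(-1)^k(k-1)!\,t^m}{F_a^k}\omega_0\bigr)$. I would use the fact that the Gauss--Manin connection on the family of smooth affine hypersurfaces commutes with the Leray residue applied to rational $n$-forms on $\mathbb{T}^n$ with poles on $V_a^\circ$, so that the connection is computed on such representatives by $\partial/\partial a_i$ applied to the integrand. Using $\partial F_a/\partial a_i = t^{m_i}$, the chain rule gives
$$\frac{\partial}{\partial a_i}\!\left(\frac{(-1)^k(k-1)!\,t^m}{F_a^k}\omega_0\right)=\frac{(-1)^{k+1}k!\,t^{m+m_i}}{F_a^{k+1}}\omega_0,$$
which is exactly the integrand defining $R^+(t_0^{k+1}t^{m+m_i})=R^+(t_0t^{m_i}\cdot s)$. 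Since $\partial s/\partial a_i=0$ in $\ring_\Delta[a]$, this is $\rho(\mathcal{D}_{a_i}s)$, as required.

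For $s=1$, we have $\rho(1)=[(\omega_0,0)]\in H^n(\mathbb{T}^n,V_a^\circ)$. Here $\omega_0$ is independent of $a$, but the pair $(\mathbb{T}^n,V_a^\circ)$ itself moves, so $\nabla_{\partial/\partial a_i}\rho(1)$ need not vanish and must land (by Theorem \ref{theorem:MHS}(ii) and Griffiths transversality) in $\rho(\mathcal{I}_{n+1})=\rho(\ring_\Delta^+)$. I would compute it by passing to the Leray model: represent $[(\omega_0,0)]$ via the meromorphic form $\omega_0/F_a$ with a simple pole along $V_a^\circ$, whose Poincar\'e--Leray residue on $V_a^\circ$ gives a representative of the second component of the pair. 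Parameter differentiation then acts nontrivially, yielding $\partial(\omega_0/F_a)/\partial a_i=-t^{m_i}\omega_0/F_a^2$; taking residue produces $R^+(t_0t^{m_i})=\rho(\mathcal{D}_{a_i}(1))$, with the sign and factorial matching the conventions of $R^+$. Finally, one checks that both $\mathcal{D}_{a_i}$ and $\nabla_{\partial/\partial a_i}$ descend to the quotient $\mathcal{R}_{F_a}[a]$; for $\mathcal{D}_{a_i}$ this is the commutation $[\mathcal{D}_{a_i},\mathcal{D}_j]=0$, immediate from the definitions.

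The main obstacle is the case $s=1$, because one must correctly account for the moving cycle in the Gauss--Manin computation on a relative cohomology group. The cleanest rigorous route is to phrase the isomorphism $\rho$ in the Koszul/hypercohomology model that Batyrev and Stienstra use to prove Theorem \ref{theorem:MHS}: in that setup, the total complex computing $H^n(\mathbb{T}^n,V_a^\circ)$ already carries a canonical flat $\mathbb{C}[a]$-linear connection whose action on $\ring_\Delta[a]$ is manifestly $\mathcal{D}_{a_i}$, so the required compatibility becomes a formal consequence of the construction and the two explicit calculations above are readings of this statement on natural representatives.
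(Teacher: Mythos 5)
Your treatment of the generators $t_0^kt^m$ with $k\geq 1$ is fine — it is exactly Batyrev's computation on $PH^{n-1}(V_a^{\circ})$, which the paper simply cites — but the computation you offer for the crucial case $s=1$ is wrong, and that case is the whole content of the lemma beyond Batyrev's result. First, $[(\omega_0,0)]\in H^n(\T^n,V_a^{\circ})$ is not ``represented by'' $\omega_0/F_a$: the Leray residue of $\omega_0/F_a$ is $\tfrac{\omega_0}{dF_a}=-R^+(t_0)$, a \emph{nonzero} class in $PH^{n-1}(V_a^{\circ})$, so the pair you would obtain is $(\omega_0,\beta)$ with $\beta$ non-exact, which is a different relative class. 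Second, even granting such a representation, differentiating the integrand gives $-t^{m_i}\omega_0/F_a^2$, a form with a pole of order two; under Batyrev's conventions its residue class is $-\rho(t_0^2t^{m_i})$, not the required $\rho(\mathcal{D}_{a_i}1)=\rho(t_0t^{m_i})$, and these two classes are genuinely different in $\mathcal{R}_{F_a}$ (the relation $\mathcal{D}_0(t_0t^{m_i})\equiv 0$ identifies $t_0t^{m_i}$ with $-\sum_m a_m t_0^2t^{m+m_i}$, not with $\pm t_0^2t^{m_i}$). So your heuristic produces an answer off by one in pole order.

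The mechanism the paper actually uses is the one you correctly identify as the obstacle but do not implement: differentiate the period of the \emph{unchanged} form $\omega_0$ over a \emph{moving} relative cycle $\Gamma_a$. Because $\partial\Gamma_a$ is constrained to lie on $V_a^{\circ}$, the derivative is a pure transport (Gelfand--Leray) boundary term,
\begin{equation*}
\frac{\partial}{\partial a_i}\int_{\Gamma_a}\omega_0
=-\int_{\partial\Gamma_a}\frac{\partial F_a}{\partial a_i}\,\frac{\omega_0}{dF_a}
=\int_{\partial\Gamma_a}R^+(t_0t^{m_i})\,,
\end{equation*}
which is exactly the pairing of $\Gamma_a$ with $\rho(t_0t^{m_i})$ and gives the first-order pole directly. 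Your closing fallback — work in the twisted de Rham/Koszul model where the connection is manifestly $\mathcal{D}_{a_i}$ — is legitimate and is in fact Stienstra's original proof (which the paper mentions and deliberately replaces), but as written it is an appeal to a construction you have not set up rather than an argument, so it cannot rescue the erroneous explicit computation.
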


Stienstra proved this by considering the de Rham complex 
$(\Omega^{\bullet}(\T^n),d+dF_a\wedge )$
\cite[\S 6]{Stienstra}.
Here we give a different proof.
This is a generalization of Takahashi's argument \cite[Lemma 1.8]{NTakahashi}.
\begin{proof}
Since $F_a$ is $\Delta$-regular, there exists 
a holomorphic $(n-1)$-form $\psi_a$ in an open neighborhood of 
$V_a^{\circ}$ in $\mathbb{T}^n$ such that $\omega_0=dF_a \wedge \psi_a$. 
The restriction of $\psi_a$ to $V_a^{\circ}$ is equal to $\text{Res}_{V_a^{\circ}} \frac{\omega_0}{F_a}$ 
and is denoted by $\frac{\omega_0}{dF_a}$. 
It is called the Gelfand--Leray form of $\omega_0$ (cf. \cite[Ch. 10]{AGV}).  

Let $\Gamma_a \in H_n(\mathbb{T}^n, V_a^{\circ})$.  
Then one can show that  
$$
\frac{\partial}{\partial a_i}
\int_{\Gamma_a} \omega_0 
= - \int_{\partial \Gamma_a} 
\frac{\partial F_a}{\partial a_i} \frac{\omega_0}{dF_a} \, .
$$
Namely, we have
$\nabla_{\frac{\partial}{\partial a_i}} \rho(1)=\rho\left(\mathcal{D}_{a_i}1\right)$. 
Since Batyrev \cite{Batyrev} has shown that the Gauss--Manin connection 
$\nabla_{\frac{\partial}{\partial a_i}}$ on $PH^{n-1}(V_a^{\circ})$ 
corresponds to $\mathcal{D}_{a_i}$ under $\rho^+$,  
the lemma follows.
\end{proof}

Lemma \ref{prop:GMrelative} and Proposition \ref{prop:Ahyper}
imply the following 
\begin{corollary}[Stienstra]
1. $H^n(\mathbb{T}^n, V_a^{\circ})$ is spanned by 
$\nabla_{a_{m_1}}\cdots\nabla_{a_{m_k}}\omega_0$
$(0\leq k\leq n$, $m_1,\ldots,m_k\in A(\Delta))$.
\\
2. 
$\omega_0$ satisfies the $A$-hypergeometric system \eqref{Ahyper}
with $\partial_{a_i}$ replaced by $\GM_{\partial_{a_i}}$.
\\
3. 
The period integrals $\int_{\Gamma_a} \omega_0$ 
of the relative cohomology $H^n(\mathbb{T}^n, V_a^{\circ})$ 
satisfies the $A(\Delta)$-hypergeometric system 
\eqref{Ahyper}.
Conversely, a solution of the $A$-hypergeometric system
\eqref{Ahyper} is a period integral.
\end{corollary}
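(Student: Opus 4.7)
The plan is to obtain each part by transporting statements about $\mathcal{R}_{F_a}[\mathbf{a}]$ through the isomorphism $\rho$ of Theorem \ref{theorem:MHS}, using Lemma \ref{prop:GMrelative} to convert algebraic operators $\mathcal{D}_{a_m}$ into Gauss--Manin derivatives $\nabla_{\partial_{a_m}}$. Since $\rho(1)=[\omega_0]$ by construction of $R^0$, the isomorphism sends $\mathcal{D}_{a_{m_1}}\cdots\mathcal{D}_{a_{m_k}}\,1$ to $\nabla_{\partial_{a_{m_1}}}\cdots\nabla_{\partial_{a_{m_k}}}\omega_0$. Hence part~1 follows immediately from Proposition \ref{prop:Ahyper}(1), and part~2 follows from Proposition \ref{prop:Ahyper}(2), since $\rho$ is an isomorphism and both $\mathcal{T}_i$ and $\square_{\,l}$ act polynomially in the operators $\mathcal{D}_{a_m}$.

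For the first half of part~3, I would use the compatibility of period integration with the Gauss--Manin connection: for a flat family of relative cycles $\Gamma_a\in H_n(\mathbb{T}^n,V_a^{\circ})$, and any polynomial differential operator $P(\partial_{a_m})$, one has
\begin{equation*}
P(\partial_{a_m})\int_{\Gamma_a}\omega_0 \;=\; \int_{\Gamma_a} P(\nabla_{\partial_{a_m}})\,\omega_0.
\end{equation*}
Applying this with $P$ equal to $\mathcal{T}_i$ and $\square_{\,l}$ and invoking part~2 gives the claim.

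For the converse, the strategy is a dimension count. Let $N$ denote the number of independent solutions to the $A$-hypergeometric system \eqref{Ahyper}; by Gel'fand--Kapranov--Zelevinsky's result \cite{GKZ1} this equals the normalized volume of $\Delta$, which in turn coincides with $\dim_{\mathbb{C}}\mathcal{R}_{F_a}=\dim_{\mathbb{C}} H^n(\mathbb{T}^n,V_a^{\circ})$ by Batyrev's theorem and Theorem \ref{theorem:MHS}(i). The de Rham/Betti pairing gives a perfect pairing between $H^n(\mathbb{T}^n,V_a^{\circ})$ and $H_n(\mathbb{T}^n,V_a^{\circ})$, so choosing a basis $\Gamma_a^{(1)},\ldots,\Gamma_a^{(N)}$ of relative homology produces $N$ period integrals $\int_{\Gamma_a^{(j)}}\omega_0$. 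These are solutions of \eqref{Ahyper} by the first half of part~3, and they are linearly independent as functions of $a$ because the pairing is non-degenerate and $\omega_0$ together with its Gauss--Manin derivatives spans the whole relative cohomology (part~1). Hence they form a basis of the solution space, so every solution is a period.

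The main obstacle is the compatibility identity for differentiation under the integral sign in the relative-cohomology setting, because the cycles $\Gamma_a$ have boundary in the moving hypersurface $V_a^{\circ}$; this is exactly the Gelfand--Leray/boundary computation already used in the proof of Lemma \ref{prop:GMrelative}, so it reduces to an argument of the same type. Everything else is formal once the isomorphism $\rho$ and the operator correspondence are in place.
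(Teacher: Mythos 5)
Your proposal is correct and follows essentially the same route as the paper, which offers no separate argument beyond the remark that the corollary is implied by Lemma \ref{prop:GMrelative} and Proposition \ref{prop:Ahyper}; transporting those two results through $\rho$ is exactly the intended deduction. Your dimension count for the converse in part 3 (rank of the $A$-hypergeometric system $=$ normalized volume of $\Delta$ $=\dim\mathcal{R}_{F_a}=\dim H^n(\mathbb{T}^n,V_a^{\circ})$, with independence of the periods forced by part 1 and the nondegeneracy of the de Rham/Betti pairing) fills in the step the paper leaves implicit, and it is sound.
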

%%%%%%%%%%%%%%%%%%%%%%%%%%%%%%%%%%%%%%%%%%%%%%%%%%%%%%%%%%%%%%%%%%%%%%%%%%%%%%%%%%%%%%%%%%%%%%%%%%%%%%
\section{Mixed Hodge structure on $H^3(Z^{\circ}_a)$}
\label{section:threefold}
%%%%%%%%%%%%%%%%%%%%%%%%%%%%%%%%%%%%%%%%%%%%%%%%%%%%%%%%%%%%%%%%%%%%%%%%
Throughout the section,  $\Delta$ is a $2$-dimensional reflexive polyhedron 
and $F_a\in \mathbb{L}_{\rm reg}(\Delta)$ is a $\Delta$-regular Laurent polynomial.
%%%%%%%%%%%%%%%%%%%%%%%%%%%%%%%%%%%%%%%%%%%%%%%%%%%%%%%%%%%%%%%%%%%%%%%%%%%%%%%%%%%%%%%%%%%%%%
\subsection{MHS on the cohomology of the threefold}
Consider the affine threefold $Z^{\circ}_a$ defined by
\begin{equation}\label{eq:defZ}
Z^{\circ}_a=\{(t,x,y)\in \T^2 \times \mathbb{C}^2\mid
F_a(t)+xy=0\}\, .
\end{equation}
We compute $H^3(Z^{\circ}_a)$ and its (V)MHS
following Batyrev \cite{Batyrev}.
Let us briefly state the results. Details are relegated to 
\S \ref{section:Threefold}.

The Poincar\'e residue map
$\mathrm{Res}: H^4(\T^2\times \mathbb{C}^2\setminus Z^{\circ}_a)
\stackrel{\cong}{\rightarrow} H^3(Z^{\circ}_a)$
is an isomorphism (see eq. \eqref{eq:PoincareResidueMap}). 
By Grothendieck \cite{Grothendieck},
$H^{\bullet}(\T^2\times \mathbb{C}^2\setminus Z_a^{\circ})$ 
is isomorphic to the cohomology of
the global de Rham complex 
$(\Gamma\Omega^{\bullet}_{\T^2\times \mathbb{C}^2}(*Z_a^{\circ}),d)$
of meromorphic differential forms on $\T^2\times\mathbb{C}^2$ with poles of
arbitrary order on $Z^{\circ}_a$.
We can show that the  homomorphism:
$$R': 
\ring_{\Delta}\to \Gamma\Omega^4_{\T^2\times\mathbb{C}^2}(*Z_a^{\circ})~;\quad
t_0^kt^m\mapsto 
\frac{(-1)^kk!\,t^m}{(F_a+xy)^{k+1}}
\frac{dt_1}{t_1}\frac{dt_2}{t_2}dxdy~,
$$
induces an isomorphism
$
\mathcal{R}_{F_a}\stackrel{\cong}{\rightarrow} H^4(\T^2\times \mathbb{C}^2\setminus Z^{\circ}_a).
$
Together with the residue map,
we obtain an isomorphism $\rho':\mathcal{R}_{F_a}\overset{\cong}{\rightarrow} H^3(Z^{\circ}_a)$.
The Gauss--Manin connection $\nabla_{\partial_{a_m}}$ on $H^3(Z^{\circ}_a)$
corresponds to a differentiation by $a_m$ on $\Gamma\Omega^4_{\T^2\times\mathbb{C}^2}(*Z^{\circ}_a)$, which in turn corresponds to
the derivation $\mathcal{D}_{a_m}$ on $\mathcal{R}_{F_a}$ (\S \ref{sec:A6}). 

To compute the weight and the Hodge filtrations,
we compactify $Z_a^{\circ}$ as a smooth hypersurface in a
toric variety (\S \ref{sec:compact}). Then we can work out 
calculation similar to \cite[\S 6,\S 8]{Batyrev}.
(Since our $Z^{\circ}_a$ is a hypersurface in $\T^2\times \mathbb{C}^2$,
not in $\T^4$,
we need some modifications. 
Especially we need Mavlyutov's results
on Hodge numbers of semiample hypersurfaces in a toric varieties \cite{Mavlyutov}.)
It turns out that 
the weight and the Hodge filtrations are given by
the $\mathcal{I}$ and the $\mathcal{E}$-filtrations on $\mathcal{R}_{F_a}$.
The result on MHS of $H^3(Z^{\circ}_a)$ is summarized as follows (Theorems
\ref{prop:Hodge}, \ref{prop:weight2}).
\begin{equation}
\begin{split}
&H^3(Z^{\circ}_a)=\mathcal{W}_6=
\mathcal{F}^0=\mathcal{F}^1\cong \mathcal{R}_{F_a}~,
\\
&\mathcal{W}_3\cong \mathcal{I}_1~,
\quad
\mathcal{W}_4=\mathcal{W}_5\cong \mathcal{I}_3~,
\\
&\mathcal{F}^2\cong \mathcal{E}^{-1}~,\quad
\mathcal{F}^3\cong \mathcal{E}^{0}~.
\end{split}
\end{equation}
%%%%%%%%%%%%%%%%%%%%%%%%%%%%%%%%%%%%%%%%%%%%%%%%%%%%%%%%%%%%%%%%%%%%%%%%%%%%%%%%%%%%%%%%%%%%%%%%%%%%%%
\subsection{Relationship to the relative cohomology}
Let $C_a^{\circ}$ be the affine curve in $\T^2$ defined by $F_a$.
Since $\Delta$ is reflexive, it is an affine elliptic curve 
obtained by deleting $l(\Delta)-1$ points from an elliptic curve $C_a$. 
The MHS on the primitive part 
$PH^1(C_a^{\circ})$ is an extension of 
$T(-1)^{\oplus (l(\Delta)-4)}$ by $H^1(C_a)$: 
$$
0\rightarrow H^1(C_a) \rightarrow PH^1(C_a^{\circ})
\rightarrow T(-1)^{\oplus(l(\Delta)-4)}
\rightarrow 0
\, .
$$
This follows from the definition of primitive part and 
the description of $H^1(C_a^{\circ})$ given in Example \ref{ex:curve}.
The MHS on the relative cohomology $H^2(\T^2, C_a^{\circ})$
is an extension \eqref{eq:ext} of $H^2(\T^2)=T(-2)$ by 
$PH^1(C_a)$ which turns out to be the trivial one
(cf. Theorem \ref{theorem:MHS}).

Let 
$\rho: \mathcal{R}_{F_a}\overset{\cong}{\rightarrow} H^2(\T^2, C_a^{\circ})$ 
be the isomorphism in Theorem \ref{theorem:MHS} and 
$\omega_0=\rho(1)\in H^2(\T^2, C_a^{\circ})$.
Let 
$$
\omega_a=\rho' (1)
=\Big[\mathrm{Res}\frac{1}{F_a+xy}
\frac{dt_1}{t_1}\frac{dt_2}{t_2}{dx}{dy}\Big] \in H^3(Z_a^{\circ})\,.
$$

\begin{theorem}\label{thm:relationship} 
The composition of isomorphisms
$H^3(Z^{\circ}_a)
\overset{\rho'^{-1}}{\rightarrow} \mathcal{R}_{F_a}
\overset{\rho}{\rightarrow} H^2(\mathbb{T}^2, C_a^{\circ})$
gives
an isomorphism 
$$ \rho \circ \rho'^{-1} : H^3(Z^{\circ}_a) \overset{\cong}{\rightarrow} H^2(\mathbb{T}^2, C_a^{\circ})$$
of $\mathbb{C}$-vector spaces which sends $\omega_a$ to $\omega_0$.
The filtrations correspond as follows: 
$$
\mathcal{F}^{i+1}H^3(Z^{\circ}_a) \overset{\cong}{\rightarrow}  
\mathcal{E}^{i-2} \overset{\cong}{\rightarrow} 
\mathcal{F}^iH^2(\mathbb{T}^2, C_a^{\circ})
\quad (i=0,1, 2)\, ,
$$
$$
\mathcal{W}_{3}H^3(Z^{\circ}_a) \overset{\cong}{\rightarrow}  \mathcal{I}_{1} 
\overset{\cong}{\rightarrow}\mathcal{W}_{1}H^2(\mathbb{T}^2, C_a^{\circ})\, ,
$$
$$
\mathcal{W}_{4}H^3(Z^{\circ}_a) =\mathcal{W}_{5}H^3(Z^{\circ}_a) 
\overset{\cong}{\rightarrow}  \mathcal{I}_{3} 
\overset{\cong}{\rightarrow}
\mathcal{W}_{2}H^2(\mathbb{T}^2, C_a^{\circ})=
\mathcal{W}_{3}H^2(\mathbb{T}^2, C_a^{\circ})\, .
$$
Moreover, $\rho\circ\rho'^{-1}$ is compatible with the Gauss--Manin
connections.
\end{theorem}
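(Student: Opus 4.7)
The plan is to observe that the theorem is essentially a formal consequence of the two algebraic descriptions that have been set up: Theorem \ref{theorem:MHS} identifies $H^2(\mathbb{T}^2, C_a^{\circ})$ with $\mathcal{R}_{F_a}$ via $\rho$, and the summary stated just before the theorem (to be proved in \S \ref{section:Threefold}) identifies $H^3(Z_a^{\circ})$ with the same space $\mathcal{R}_{F_a}$ via $\rho'$. Both identifications send $1 \in \mathcal{R}_{F_a}$ to the natural generator ($\omega_0$ and $\omega_a$ respectively), and both intertwine the Gauss--Manin connection with the operator $\mathcal{D}_{a_m}$. The Yukawa coupling machinery developed later needs precisely the compatibility between the two sides, so the role of this theorem is to bundle these already-established facts into a single statement.

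First, I would define the map of the theorem as $\rho \circ \rho'^{-1}$ and note that it is an isomorphism of $\mathbb{C}$-vector spaces, being a composition of isomorphisms. The fact that it sends $\omega_a$ to $\omega_0$ is immediate from the definitions $\omega_a = \rho'(1)$ and $\omega_0 = \rho(1)$. Next, I would specialize Theorem \ref{theorem:MHS} to $n=2$. This gives the filtration correspondences on the target side:
\begin{equation*}
\rho(\mathcal{E}^{i-2}) = \mathcal{F}^i H^2(\mathbb{T}^2, C_a^{\circ}) \quad (0 \leq i \leq 2),
\end{equation*}
\begin{equation*}
\rho(\mathcal{I}_1) = \mathcal{W}_1 H^2(\mathbb{T}^2, C_a^{\circ}), \qquad \rho(\mathcal{I}_3) = \mathcal{W}_2 H^2(\mathbb{T}^2, C_a^{\circ}) = \mathcal{W}_3 H^2(\mathbb{T}^2, C_a^{\circ}).
\end{equation*}
On the source side, the summary of results on $H^3(Z_a^{\circ})$ gives
\begin{equation*}
\rho'(\mathcal{E}^{i-1}) = \mathcal{F}^{i+1} H^3(Z_a^{\circ}), \qquad \rho'(\mathcal{I}_1) = \mathcal{W}_3 H^3(Z_a^{\circ}), \qquad \rho'(\mathcal{I}_3) = \mathcal{W}_4 H^3(Z_a^{\circ}) = \mathcal{W}_5 H^3(Z_a^{\circ}).
\end{equation*}
Comparing these identifications, each filtration step of $H^3(Z_a^{\circ})$ corresponds to the same subspace of $\mathcal{R}_{F_a}$ as the indicated filtration step of $H^2(\mathbb{T}^2, C_a^{\circ})$, so $\rho \circ \rho'^{-1}$ carries one to the other.

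Finally, I would dispose of the Gauss--Manin compatibility. Lemma \ref{prop:GMrelative} already shows that $\nabla_{\partial/\partial a_m}$ on $H^2(\mathbb{T}^2, C_a^{\circ})$ corresponds to $\mathcal{D}_{a_m}$ on $\mathcal{R}_{F_a}[a]$ through $\rho$. For $\rho'$, the analogous statement is exactly what is claimed in \S \ref{sec:A6} (via differentiation under the residue sign applied to $R'(t_0^k t^m)$, which produces precisely the action of $\mathcal{D}_{a_m}$). Granting these two facts, the composition $\rho \circ \rho'^{-1}$ intertwines the two Gauss--Manin connections tautologically.

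The only real content to check is therefore the assertion about $H^3(Z_a^{\circ})$, and this is deferred to \S \ref{section:Threefold}; in particular the hard work lies in the compactification of $Z_a^{\circ}$ as a semiample hypersurface in a toric variety and the use of Mavlyutov's Hodge-number computations to pin down the weight and Hodge filtrations on the $\mathcal{R}_{F_a}$ side. Once that calculation is in hand, the present theorem is just a bookkeeping exercise: the two geometric realizations factor through a common algebraic model, and the theorem records this factorization together with the consequences for the generator, the two filtrations, and the connection.
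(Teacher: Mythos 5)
Your proposal is correct and follows essentially the same route as the paper: Theorem \ref{thm:relationship} is indeed obtained by composing the two identifications of $H^2(\T^2,C_a^{\circ})$ and $H^3(Z_a^{\circ})$ with the common algebraic model $\mathcal{R}_{F_a}$ (Theorem \ref{theorem:MHS} on the curve side; Theorems \ref{prop:Hodge} and \ref{prop:weight2} together with \S \ref{sec:A6} on the threefold side, which is where all the real work lives), and then reading off the generator, filtration, and Gauss--Manin correspondences. One small slip: on the source side the Hodge filtration correspondence is $\rho'(\mathcal{E}^{i-2})=\mathcal{F}^{i+1}H^3(Z_a^{\circ})$ (i.e. $\mathcal{F}^{1}\cong\mathcal{E}^{-2}$, $\mathcal{F}^{2}\cong\mathcal{E}^{-1}$, $\mathcal{F}^{3}\cong\mathcal{E}^{0}$), not $\rho'(\mathcal{E}^{i-1})=\mathcal{F}^{i+1}H^3(Z_a^{\circ})$; as written your formula would make the composite preserve the Hodge level rather than shift it by one, contradicting the statement, so the index should be corrected before the comparison step.
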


Note that $\mathcal{W}_1H^2(\T^2, C_a^{\circ})
=\mathcal{W}_1PH^1(C_a^{\circ})
\cong H^1(C_a)$. 
Therefore, it inherits a nondegenerate pairing. 
The same is true for $\mathcal{W}_3H^3(Z_a^{\circ})$, since it is isomorphic 
to the cohomology $H^3(Z_a)$ of a certain smooth compactification 
$Z_a$ of $Z_a^{\circ}$ (cf. \S  \ref{sec:compact})%
\footnote{
The divisor 
$Z_a\setminus Z_a^{\circ}$ is not smooth but simple normal crossing.
The pull-back $H^3(Z_a)\to \mathcal{W}_3H^3(Z_a^{\circ})$, 
which is always surjective, turns out to be injective. 
This can be checked by comparing the dimension given in 
Lemma \ref{lem:hodgenumber}
and that in Proposition \ref{prop:weight3}.
}. 
  
%%%%%%%%%%%%%%%%%%%%%%%%%%%%%%%%%%%%%%%%%%%%%%%%%%%%%%%%%%%%%%%%%%%%%%%%%%%%%%%%%%%%%%%%%%%%%%
\section{Analogue of Yukawa coupling}\label{section:Yukawa}
%%%%%%%%%%%%%%%%%%%%%%%%%%%%%%%%%%%%%%%%%%%%%%%%%%%
In this section, $\Delta$ is a $2$-dimensional reflexive polyhedron
unless otherwise specified. 
%%%%%%%%%%%%%%%%%%%%%%%%%%%%%%%%%%%%%%%%%%%%%%%%%%%%%%%%%%%%%%%%%%
\subsection{Definition of Yukawa coupling via  
affine curves or threefolds}
%%%%%%%%%%%%%%%%%%%%%%%%%%%%%%%%%%%%%%%%%%%%%%%%%%%%%%%
Let $\Delta$ be a $2$-dimensional reflexive polyhedron.
Let $T^0\mathbb{L}_{{\rm reg}}(\Delta)$ be
the subbundle of the holomorphic tangent bundle  
$T\mathbb{L}_{{\rm reg}}(\Delta)$
of $\mathbb{L}_{{\rm reg}}(\Delta)$
generated by $\partial_{a_0}$.
Consider the family of affine elliptic curves
$p:\mathcal{Z}\to \mathbb{L}_{{\rm reg}}(\Delta)$:
$$
\mathcal{Z}=\{(a,t)\in \mathbb{L}_{{\rm reg}}(\Delta)\times \mathbb{T}^2\mid
F_a(t)=0\}~.
$$
Let $C_a $ be the smooth compactification
of the affine curve $C^{\circ}_a:=p^{-1}(a)$.
Note that we have
$\mathrm{Gr}^0_{\mathcal{F}}H^2(\T^2,C^{\circ}_a)=
\mathrm{Gr}^0_{\mathcal{F}} \mathcal{W}_1 H^2(\T^2,C_a^{\circ})
$.

\begin{lemma}\label{prop:6-1}
For any $\alpha\in H^2(\T^2, C^{\circ}_a)$,
there exists $\alpha' \in \mathcal{W}_1H^2(\T^2,C^{\circ}_a)(\cong H^1(C_a))$
such that
$
[\alpha]=[\alpha']
$
in $\mathrm{Gr}^0_{\mathcal{F}}H^2(\T^2,C^{\circ}_a)=
\mathrm{Gr}^0_{\mathcal{F}} \mathcal{W}_1 H^2(\T^2,C_a^{\circ})$.
\end{lemma}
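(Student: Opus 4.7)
The plan is to work entirely inside the ring model, using the isomorphism $\rho:\mathcal{R}_{F_a}\xrightarrow{\cong} H^2(\T^2,C_a^\circ)$ of Theorem \ref{theorem:MHS}, which for $n=2$ translates the Hodge filtration into the $\mathcal{E}$-filtration shifted by $-2$ and the weight filtration into the $\mathcal{I}$-filtration. All the work then reduces to reading off the explicit decomposition \eqref{reflexiveRF}.

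First I would identify the target. Since $\mathcal{F}^0\cong\mathcal{E}^{-2}=\mathcal{R}_{F_a}$ and $\mathcal{F}^1\cong\mathcal{E}^{-1}=\mathbb{C}\cdot 1\oplus\bigoplus_{m\in A'(\Delta)}\mathbb{C}\, t_0 t^m\oplus\mathbb{C}\, t_0$, the quotient $\mathrm{Gr}^0_\mathcal{F}H^2(\T^2,C_a^\circ)\cong\mathcal{E}^{-2}/\mathcal{E}^{-1}$ is one-dimensional, represented by the class of $t_0^2$. The crucial observation is that this unique generator $t_0^2$ already lies in $\mathcal{I}_1$, hence corresponds to an element of $\mathcal{W}_1 H^2(\T^2,C_a^\circ)$.

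With this in hand the construction of $\alpha'$ is immediate. Given $\alpha\in H^2(\T^2,C_a^\circ)$, pull it back to $\mathcal{R}_{F_a}$ and expand it in the direct-sum basis of \eqref{reflexiveRF} as $\alpha=\alpha_{\leq 1}+c\, t_0^2$ with $\alpha_{\leq 1}\in\mathcal{E}^{-1}=\mathcal{F}^1$ and $c\in\mathbb{C}$. Set $\alpha':=c\, t_0^2$. Then $\alpha'\in\mathcal{I}_1\cap\mathcal{E}^{-2}$, so it lies in $\mathcal{W}_1\cap\mathcal{F}^0$, and $\alpha-\alpha'=\alpha_{\leq 1}\in\mathcal{F}^1$, which gives $[\alpha]=[\alpha']$ in $\mathrm{Gr}^0_\mathcal{F}$, as required.

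The preliminary assertion $\mathrm{Gr}^0_\mathcal{F}H^2(\T^2,C_a^\circ)=\mathrm{Gr}^0_\mathcal{F}\mathcal{W}_1 H^2(\T^2,C_a^\circ)$ in the statement of the lemma is then built into the argument: the representative $\alpha'$ sits in $\mathcal{W}_1\cap\mathcal{F}^0$, so the natural map $\mathrm{Gr}^0_\mathcal{F}\mathcal{W}_1\to\mathrm{Gr}^0_\mathcal{F}H^2$ is surjective; injectivity follows because $\mathcal{E}^{-1}\cap\mathcal{I}_1=\mathbb{C}\, t_0$ by \eqref{reflexiveRF}, so $t_0^2$ still represents a nonzero class in $\mathcal{I}_1/(\mathcal{E}^{-1}\cap\mathcal{I}_1)$. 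There is no genuine obstacle: the whole proof is a direct unpacking of \eqref{reflexiveRF}, the moral content being simply that among the monomial generators of $\mathcal{R}_{F_a}$ the unique one of Hodge level $0$, namely $t_0^2$, happens to be of weight-filtration level $1$.
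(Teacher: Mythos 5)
Your proof is correct and is essentially the paper's own argument: both expand $\alpha$ in the monomial decomposition \eqref{reflexiveRF} via $\rho$ and peel off the $t_0^2$-component, which lies in $\mathcal{I}_1\cong\mathcal{W}_1$ while the remainder lies in $\mathcal{E}^{-1}\cong\mathcal{F}^1$. The only cosmetic difference is that the paper takes $\alpha'=\alpha_{2,0}\rho(t_0^2)+c\,\rho(t_0)$ with $c$ arbitrary (to foreshadow that the subsequent pairing is independent of this choice), whereas you fix $c=0$; this does not affect the validity of the existence statement.
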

\begin{proof}
By \eqref{reflexiveRF},
$\alpha$ is written as 
$$
\alpha=\alpha_{2,0}\rho(t_0^2)+\sum_{m\in A'(\Delta)}\alpha_{1,m} \rho(t_0t^m)+
\alpha_{1,0}\rho(t_0)+\alpha_{0,0}\omega_0~ .
$$
Take $\alpha'=\alpha_{2,0}\rho(t_0^2)+c \rho(t_0)$, where $c \in \mathbb{C}$
is arbitrary.
\end{proof}

The pairing 
$$
H^2(\T^2,C^{\circ}_a)\times \mathcal{F}^1 \mathcal{W}_1 H^2(\T^2,C^{\circ}_a)
\rightarrow \mathbb{C}\, ; \quad
(\alpha,\beta)\mapsto\int_{C_a}\alpha'\wedge \beta
$$ 
is independent of the choice of $\alpha'$. 
Recall that $\nabla_{a_0}\omega_0 \in \mathcal{F}^1 \mathcal{W}_1
H^2(\T^2,C^{\circ}_a)$.

\begin{definition}\label{definition:Yukawa2}
For $k\geq 1$,
we define a map
$$\mathrm{Yuk}^{(k)}:
\underbrace{T\mathbb{L}_{{\rm reg}}(\Delta) 
\times \cdots\times
T\mathbb{L}_{{\rm reg}}(\Delta)}_{(k-1)\text{ times }}
\times T^0\mathbb{L}_{{\rm reg}}(\Delta)
\to \mathcal{O}_{\mathbb{L}_{{\rm reg}}(\Delta)}
$$
by 
$$
\mathrm{Yuk}^{(k)}(A_1,\ldots,A_{k-1};A_k)
=\int_{C_a} (\nabla_{A_1}\cdots \nabla_{A_{k-1}} \omega_0)'\wedge
\nabla_{A_k} \omega_0~.
$$
We call $\mathrm{Yuk}^{(3)}$ the Yukawa coupling and 
denote it by $\mathrm{Yuk}$.
\end{definition}

\begin{remark} 
$
\mathrm{Yuk}^{(1)}=\mathrm{Yuk}^{(2)}=0$
by Griffiths' transversality.
For $k\geq 4$, 
$\mathrm{Yuk}^{(k)}(A_1,\ldots, A_{k-1};A_k)$ is 
$\mathcal{O}_{\mathbb{L}_{{\rm reg}}(\Delta)}$-linear
in $A_1,A_k$ and 
$\mathbb{C}$-linear in $A_2,\ldots, A_{k-1}$. 
For $k=3$,  
$\mathrm{Yuk}^{(3)}$ is $\mathcal{O}_{\mathbb{L}_{{\rm reg}}(\Delta)}$-multilinear.
\end{remark}

\begin{remark}
Instead of the relative cohomology $H^2(\T^2,C^{\circ}_a)$,
we can 
use the cohomology $H^3(Z^{\circ}_a)$  of 
the open threefold $Z^{\circ}_a$ defined in \eqref{eq:defZ}, 
provided that the levels of Hodge and
weight filtrations are shifted
according to Theorem \ref{thm:relationship}
and that the integration on $C_a$ 
is replaced by that on the compact threefold $Z_a$ defined in \S  \ref{sec:compact}.
\end{remark}
%%%%%%%%%%%%%%%%%%%%%%%%%%%%%%%%%%%%%%%%%%%%%%%%%%%%%%%%%%%%%%%%%
\subsection{Batyrev's paring}\label{sec:pairing}
%%%%%%%%%%%%%%%%%%%%%%%%%%%%%%%%%%%%%%%%%%%%%%%%%%%%%%%%%%%%%%%%%%
We would like to give an algebraic description 
of the Yukawa coupling in terms of the Jacobian ring ${R}_{F_a}$. 
For that purpose, we recall Batyrev's pairing \cite[\S 9]{Batyrev}.
Let $\Delta$ 
be an integral  convex $n$-dimensional polyhedron
and  
$F_a\in \mathbb{L}_{{\rm reg}}(\Delta)
$ a $\Delta$-regular Laurent polynomial.
Denote by $D_{F_a}$ the quotient 
$$D_{F_a}:=
I_{\Delta}^{(1)}\Big/
 (t_0F_a,t_0\theta_{t_1}F_a,\ldots, t_0 \theta_{t_n}F_a )
   \cdot I_{\Delta}^{(1)}~.
$$
It is a graded $R_{F_a}$-module 
consisting of the homogeneous pieces $D_{F_a}^i$ ($1\leq i\leq n+1$).
We have $D_{F_a}^{n+1}\cong \mathbb{C}~.$
The multiplicative structure of $R_{F_a}$-module
defines a nondegenerate pairing
$$
\langle~~,~~\rangle:
R_{F_a}^i\times D_{F_a}^{n+1-i}\rightarrow D_{F_a}^{n+1}
\cong \mathbb{C}~.
$$
Let $H_{F_a}$ be the image of the homomorphism
$
D_{F_a}\to R_{F_a}$ induced by the inclusion $I_{\Delta}^{(1)}\hookrightarrow \ring_{\Delta}$.
Then the above pairing induces a nondegenerate pairing
\begin{equation}\nonumber
\{~~,~~\}:H_{F_a}^i\times H_{F_a}^{n+1-i}\to D_{F_a}^{n+1} \cong \mathbb{C};~~
\{\alpha,\beta\}:=\langle \alpha,\beta' \rangle\, ,
\end{equation}
where $\beta'\in D_{F_a}^{n+1-i}$ 
is an element such that its image
by the homomorphism $D_{F_a}^{n+1-i} \to H_{F_a}^{n+1-i}$
is $\beta$.
%%%%%%%%%%%%%%%%%%%%%%%%%%%%%%%%%%%%%%%%%%%%%%%%%%%%%%%%%%%%%%%%%%%%%%%%%%%%%%%%%%%%%%%%%%%%%%%%%%%%%%%%%%%%%%%%%%%%%%
\subsection{Yukawa coupling in terms of Batyrev's pairing}
\label{sec:yukawaRF}
Now we come back  
to the case when $\Delta$ is a $2$-dimensional
reflexive polyhedron. In this case, we have $D_{F_a}\cong t_0 R_{F_a}$.
We
explain that the Yukawa coupling defined in Definition \ref{definition:Yukawa2}
is essentially Batyrev's pairing together with 
a choice (concerning the dependence on the parameter $a$) 
of the isomorphism 
$$
\xi_a: D_{F_a}^3\rightarrow \mathbb{C}~.
$$

First identify $\mathcal{I}_1$
with $H_{F_a}$ so that it is compatible with
the Hodge decomposition $H^1(C_a)=H^{1,0}(C_a)\oplus H^{0,1}(C_a)$
under the isomorphism $\rho:\mathcal{R}_{F_a}\to H^2(\T^2,C_a^{\circ})$.
Then Batyrev's pairing 
\begin{equation}\label{HFpairing}
\{~~,~~\}:~H_{F_a}^2\times H_{F_a}^1 \to D_{F_a}^3\cong \mathbb{C},
\end{equation}
induces an antisymmetric pairing
$\langle~~,~~ \rangle_{\mathcal{I}_1}$
on $\mathcal{I}_1$.
Although we do not have an explicit description of
such decomposition $\mathcal{I}_1= H_{F_a}^1\oplus H_{F_a}^2$,
the fact that $H_{F_a}^1$ and $H_{F_a}^2$ are one-dimensional
makes it possible to find $\langle ~~,~~\rangle_{\mathcal{I}_1}$%
\footnote{
An
isomorphism $\mathcal{I}_1\to 
H_{F_a}^1\oplus H_{F_a}^2$ compatible with the graded quotient
is given by
$\alpha_{1,0} t_0+\alpha_{2,0} t_0^2\mapsto 
 (\alpha_{1,0}-u)t_0\oplus \alpha_{2,0} t_0^2$
with some $u$.
The induced antisymmetric pairing on $\mathcal{I}_1$
turns out to be independent of $u$.
}.
It is given by
$$
\langle \alpha_{1,0} t_0+\alpha_{2,0} t_0^2\,,\,
\beta_{1,0} t_0+\beta_{2,0} t_0^2\rangle_{\mathcal{I}_1}
=(-\alpha_{1,0}\beta_{2,0}+\alpha_{2,0}\beta_{1,0})\, \xi_a(t_0^3)~.
$$
Our choice of
$\xi_a$ is as follows.
\begin{proposition}\label{prop:flat}
There exists a map $\xi_a: D_{F_a}^3\rightarrow \mathbb{C}$ which 
is holomorphic in $a \in \mathbb{L}_{{\rm reg}}(\Delta)$ 
and satisfies the following condition: 
\begin{equation}\label{eq:xi-condition}
\langle\mathcal{D}_{a_m}\alpha,\beta\rangle_{\mathcal{I}_1}+
\langle\alpha,\mathcal{D}_{a_m}\beta\rangle_{\mathcal{I}_1}=
\partial_{a_m}\langle\alpha,\beta \rangle_{\mathcal{I}_1}~.
\end{equation}
\end{proposition}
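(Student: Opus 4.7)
The strategy is to identify the algebraic pairing $\langle\cdot,\cdot\rangle_{\mathcal{I}_1}$, for the right choice of $\xi_a$, with the geometric intersection pairing on $H^1(C_a)$ transported via the isomorphism $\mathcal{I}_1 \cong \mathcal{W}_1 H^2(\mathbb{T}^2, C_a^{\circ}) \cong H^1(C_a)$ of Theorem \ref{thm:relationship}. The intersection pairing is the polarization of the weight-$1$ variation of pure Hodge structure on $\mathcal{W}_1 H^2(\mathbb{T}^2, C_a^{\circ})$, hence is flat under the Gauss--Manin connection; combined with Lemma \ref{prop:GMrelative}, which identifies $\mathcal{D}_{a_m}$ with $\nabla_{\partial/\partial a_m}$, this flatness is exactly the content of \eqref{eq:xi-condition}.

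Concretely, I would first transport the intersection pairing from $H^1(C_a)$ to an antisymmetric nondegenerate bilinear form $B_a$ on $\mathcal{I}_1$. Since $\nabla_{a_m}$ preserves the weight filtration on the VMHS and $B_a$ is the polarization of the weight-$1$ graded piece, one has the flat-pairing identity
$$\partial_{a_m} B_a(\alpha,\beta) = B_a(\nabla_{a_m}\alpha,\beta) + B_a(\alpha,\nabla_{a_m}\beta),$$
for sections $\alpha,\beta$ of $\mathcal{I}_1$. By Lemma \ref{prop:GMrelative}, this becomes
$$\partial_{a_m} B_a(\alpha,\beta) = B_a(\mathcal{D}_{a_m}\alpha,\beta) + B_a(\alpha,\mathcal{D}_{a_m}\beta).$$
Thus it suffices to find a holomorphic $\xi_a: D_{F_a}^3 \to \mathbb{C}$ for which $\langle\cdot,\cdot\rangle_{\mathcal{I}_1} = B_a$.

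To produce $\xi_a$, I would exploit the fact that $\mathcal{I}_1 \cong H_{F_a}^1 \oplus H_{F_a}^2$ with each summand one-dimensional, so any antisymmetric nondegenerate pairing on $\mathcal{I}_1$ valued in the one-dimensional space $D_{F_a}^3$ is unique up to a scalar. The Batyrev pairing $\{-,-\}$ gives one such (after antisymmetric extension), and $B_a$ gives another via an arbitrarily chosen trivialization of $D_{F_a}^3$. They therefore differ by a unique scalar, which defines $\xi_a$: pick any pair $\alpha \in H_{F_a}^2$, $\beta \in H_{F_a}^1$ with $\{\alpha,\beta\} \neq 0$ and set $\xi_a(\{\alpha,\beta\}) = B_a(\alpha,\beta)$. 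Holomorphy of $\xi_a$ follows because the right-hand side is holomorphic in $a$ (as an integral of smoothly varying forms on the family $\mathcal{Z}\to \mathbb{L}_{\mathrm{reg}}(\Delta)$), and the denominator $\{\alpha,\beta\}$ varies holomorphically and never vanishes in a neighborhood. With this $\xi_a$, the identity \eqref{eq:xi-condition} is just the flatness of $B_a$ translated via the isomorphism.

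The main obstacle is justifying the flatness of $B_a$ in the mixed-Hodge context: one must verify that the polarization on $\mathrm{Gr}_1^{\mathcal{W}} H^2(\mathbb{T}^2, C_a^{\circ})$ coincides with the intersection pairing on $H^1(C_a)$ (via the isomorphism noted after Theorem \ref{thm:relationship}) and that the Gauss--Manin connection on this sub-quotient agrees with the one on $H^1(C_a)$. These are consequences of the admissibility of the VMHS constructed in \S\ref{section:relative} together with the explicit identification $\mathcal{W}_1 \cong H^1(C_a)$; once they are in hand, the flatness of the polarization under Gauss--Manin is the classical fact for families of elliptic curves.
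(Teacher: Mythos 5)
Your argument is correct, but it is not the route the paper takes. The paper's proof is purely algebraic: writing $t_0^2t^m=\alpha_m t_0+\beta_m t_0^2$ and $t_0^3=\gamma t_0+\delta t_0^2$ in $\mathcal{I}_1$, it shows that \eqref{eq:xi-condition} is equivalent to the explicit first-order system \eqref{eq:xi-condition2}, namely $\partial_{a_m}\xi_a(t_0^3)=-(2\alpha_m+\delta\beta_m+\partial_{a_0}\beta_m)\,\xi_a(t_0^3)$, and then verifies the integrability condition (equality of the mixed partials of the coefficients) directly from the relations $\mathcal{D}_{a_n}t_0^2t^m-\mathcal{D}_{a_m}t_0^2t^n=0$ and $\mathcal{D}_{a_n}t_0^3t^m-\mathcal{D}_{a_m}t_0^3t^n=0$ in $\mathcal{I}_1$. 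You instead transport the intersection form on $H^1(C_a)$ to $\mathcal{I}_1$ via $\rho$, invoke the flatness of the topologically defined cup product under the Gauss--Manin connection, and pin down $\xi_a$ using the fact that antisymmetric forms on the two-dimensional space $\mathcal{I}_1$ valued in the one-dimensional $D_{F_a}^3$ form a line. This is essentially the content of Remark \ref{remark:xi-condition}, which the authors state immediately \emph{after} the proposition as an equivalent reformulation rather than as its proof. Your route is conceptually cleaner and explains why the statement is true; the points you must (and do) supply are that the connection induced on the flat subbundle $\mathcal{W}_1H^2(\mathbb{T}^2,C_a^{\circ})\cong H^1(C_a)$ agrees with the Gauss--Manin connection of the family of compact curves (functoriality), and the compatibility $\rho\circ\mathcal{D}_{a_m}=\nabla_{a_m}\circ\rho$ of Lemma \ref{prop:GMrelative}. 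What the paper's computation buys in exchange is self-containedness and, more importantly, the explicit differential equation \eqref{eq:xi-condition2} for $\xi_a(t_0^3)$, which is what is actually solved in the examples (e.g. $\xi_a(t_0^3)\propto (27a_1a_2a_3+a_0^3)^{-1}$ for the polyhedron $\#1$); your construction characterizes $\xi_a$ as a period-type quantity rather than as the solution of an explicit ODE.
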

\begin{proof}
Define 
$\alpha_m,\, \beta_m\in \mathbb{C}(a)$ ($m\in A(\Delta)$) 
and $\gamma,\, \delta\in \mathbb{C}(a)$ by
the following relations in $\mathcal{I}_1$:
$$
t_0^2t^m=\alpha_m t_0+\beta_m t_0^2\,,\quad
t_0^3=\gamma t_0+\delta t_0^2\, .
$$
Then the condition \eqref{eq:xi-condition} 
is equivalent to
\begin{equation} \label{eq:xi-condition2}
\partial_{a_m}\xi_a(t_0^3) =-(2\alpha_m+\delta \beta_m+\partial_{a_0}\beta_m)\xi_a (t_0^3)~.
\end{equation}
The existence of a solution $\xi_a(t_0^3)$ to this equation is ensured by the equation
$$
\partial_{a_n}(2\alpha_m+\delta \beta_m+\partial_{a_0}\beta_m)
=\partial_{a_m}(2\alpha_n+\delta \beta_n+\partial_{a_0}\beta_n),
$$
which follows from
the relations in $\mathcal{I}_1$:
$$
\mathcal{D}_{a_n}t_0^2t^m-\mathcal{D}_{a_m}t_0^2t^n=0\, ,\quad
\mathcal{D}_{a_n}t_0^3t^m-\mathcal{D}_{a_m}t_0^3t^n=0\, .
$$
\end{proof}
\begin{remark}\label{remark:xi-condition}
The condition \eqref{eq:xi-condition} 
is equivalent to the following equation for the 
intersection product on $H^1(C_a)$
under the isomorphism $\rho:\mathcal{R}_{F_a}\to H^2(\T^2,C^{\circ}_a)$:
$$
\int_{C_a} \nabla_{a_m}\alpha\wedge \beta
+\int_{C_a}\alpha \wedge \nabla_{a_m}\beta=
\partial_{a_m}\int_{C_a}\alpha\wedge \beta~,
$$
which is well-known in the context of variations of 
polarized Hodge structures. 
\end{remark}
\begin{example}
For the polyhedron $\#$1 in Figure \ref{fig:Hirzebruch},
solving \eqref{eq:xi-condition2},
we obtain
$$
\xi_a(t_0^3)=\frac{1}{27a_1a_2a_3+a_0^3}\times \text{a nonzero constant}~.
$$
\end{example}

Batyrev's pairing \eqref{HFpairing}
together with 
the quotient map $\mathcal{R}_{F_a}\to H_{F_a}^2=R_{F_a}^2=
\mathcal{E}^{-2}/\mathcal{E}^{-1}$ 
induces a pairing
$$
(~~,~~): \mathcal{R}_{F_a}\times H_{F_a}^1\to D_{F_a}^3
\stackrel{\xi_a}{\cong} \mathbb{C}~.
$$
Then, by Remark  \ref{remark:xi-condition},
we have the equation
\begin{equation}\label{eq:equality}
\mathrm{Yuk}^{(k)}(A_1,\ldots, A_{k-1};A_{k})=
(\mathcal{D}_{A_1}\cdots
\mathcal{D}_{A_{k-1}}1 ,\mathcal{D}_{A_k} 1)
\times \text{ a nonzero constant }.
\end{equation}
Here
$\mathcal{D}_A$ is the shorthand notation for
$$
\mathcal{D}_{A}:=\sum_{m\in A(\Delta)}A_m\mathcal{D}_{a_m}~$$
where
$A=\sum_{m\in A(\Delta)}A_m\partial_{a_m}$ is a vector field
on $\mathbb{L}_{{\rm reg}}(\Delta)$.

\begin{example}\label{p2yukawa1}
Let $\Delta$ be the polyhedron $\#$1 in Figure \ref{fig:Hirzebruch}. 
By \eqref{eq:equality}, 
the Yukawa coupling 
$\mathrm{Yuk}(\partial_{a_0},\partial_{a_0};\partial_{a_0})$
is equal to
$
(\mathcal{D}_{a_0},\mathcal{D}_{a_0}1\,,\,\mathcal{D}_{a_0}1)=\xi_a(t_0^3)
$
up to non-zero multiplicative constant. Compare with Example \ref{example:p2yukawa} below.
\end{example}
%%%%%%%%%%%%%%%%%%%%%%%%%%%%%%%%%%%%%%%%%%%%%%%%%%%%%%%%%%%%%%%%%%
\subsection{Yukawa coupling and the $A$-hypergeometric system}
%%%%%%%%%%%%%%%%%%%%%%%%%%%%%%%%%%%%%%%%%%%%%%%%%%%%%%%%%%%%%%%%%
Recall 
the $A$-hypergeometric system introduced in \S \ref{sec:Ahyper}. 
The following proposition enables us to compute the Yukawa coupling
by the $A$-hypergeometric system. (See also Lemma \ref{prop:diffyukawa1} in 
the next subsection.)

\begin{proposition}\label{prop:Yukawa-equation1}
1.  For $k\geq 3$ and $m_1,\ldots,m_{k-1}\in A(\Delta)$,
\begin{equation}\label{eq:yukawa1}
\mathcal{T}_i
\mathrm{Yuk}^{(k)}(\theta_{a_{m_1}},\ldots,\theta_{a_{m_{k-1}}};
\theta_{a_0})=0 \quad (i=0,1,2,3).
\end{equation}
2. For a vector $l=(l_m)_{m\in A(\Delta)}\in L(\Delta)$,
let $k$ be the order of the differential operator $\square_l$.
Let us write $\square_l$ as
$$\square_l=\partial_{a_{m_1}}\cdots\partial_{a_{m_k}}-
\partial_{a_{n_1}}\cdots\partial_{a_{n_k}}~.
$$
Then we have
\begin{equation} \nonumber 
\mathrm{Yuk}^{(k+1)}(\partial_{a_{m_1}},\ldots,\partial_{a_{m_k}};\partial_{a_0})-
\mathrm{Yuk}^{(k+1)}(\partial_{a_{n_1}},\ldots,\partial_{a_{n_k}};\partial_{a_0})=0~.
\end{equation}
Moreover, for $j_1,\ldots,j_h\in A(\Delta)$, we have
$$
\mathrm{Yuk}^{(k+h+1)}(\partial_{a_{j_1}},\ldots,\partial_{a_{j_h}},
\partial_{a_{m_1}},\ldots,\partial_{a_{m_k}};\partial_{a_0})-
\mathrm{Yuk}^{(k+h+1)}(\partial_{a_{j_1}},\ldots,\partial_{a_{j_h}},\partial_{a_{n_1}},\ldots,\partial_{a_{n_k}};\partial_{a_0})=0~.
$$ 
3. For $m,n\in A(\Delta)$,
\begin{equation}\nonumber 
\partial_{a_m}\mathrm{Yuk}^{(3)}(\partial_{a_0},\partial_{a_n};\partial_{a_0})
+\partial_{a_n}\mathrm{Yuk}^{(3)}(\partial_{a_0},\partial_{a_m};\partial_{a_0})
=2\mathrm{Yuk}^{(4)}(\partial_{a_0},\partial_{a_m},\partial_{a_n};\partial_{a_0})~
\footnote{
This equation is  analogous to the case of the
compact Calabi--Yau threefold. See \cite{HKTY1}.
}
.
\end{equation}
\end{proposition}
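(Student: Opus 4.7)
The plan is to reduce each of the three identities to an algebraic computation in $\mathcal{R}_{F_a}[a]$ via the description \eqref{eq:equality} of the Yukawa coupling. The proof rests on three facts. First, the operators $\mathcal{D}_{a_m}=\partial_{a_m}+t_0 t^m$ on $\mathcal{R}_{F_a}[a]$ pairwise commute, and consequently so do the $a_m\mathcal{D}_{a_m}$. Second, the proof of Proposition \ref{prop:Ahyper}.2 shows that $T_i:=\sum_m m_i\, a_m\mathcal{D}_{a_m}$ satisfies $T_i\,1=\mathcal{D}_i\,1=0$ in $\mathcal{R}_{F_a}[a]$, and analogously $T_0:=\sum_m a_m\mathcal{D}_{a_m}$ kills $1$; the same proposition also gives $\square_l|_{\partial_{a_m}\to\mathcal{D}_{a_m}}\,1=0$. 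Third, the pairing $(\cdot,\cdot)_\xi$ in \eqref{eq:equality} satisfies the Leibniz rule $\partial_{a_m}(\alpha,\beta)_\xi=(\mathcal{D}_{a_m}\alpha,\beta)_\xi+(\alpha,\mathcal{D}_{a_m}\beta)_\xi$; geometrically this is the flatness of the intersection pairing on $H^1(C_a)\cong\mathcal{W}_1H^2(\T^2,C_a^\circ)$ under Gauss--Manin (cf. Remark \ref{remark:xi-condition}).

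For part 1, I use \eqref{eq:equality} together with $\mathcal{D}_{\theta_{a_m}}=a_m\mathcal{D}_{a_m}$ to write
$$\mathrm{Yuk}^{(k)}(\theta_{a_{m_1}},\ldots,\theta_{a_{m_{k-1}}};\theta_{a_0})=\mathrm{const}\cdot\bigl((a_{m_1}\mathcal{D}_{a_{m_1}})\cdots(a_{m_{k-1}}\mathcal{D}_{a_{m_{k-1}}})\,1,\ a_0\mathcal{D}_{a_0}\,1\bigr)_\xi.$$
Applying $\mathcal{T}_i$ via the Leibniz rule distributes as $T_i$ (or $T_0$) acting on each of the two factors. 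By commutativity, $T_i$ slides through the remaining $a_j\mathcal{D}_{a_j}$'s to land on $1$, where it equals zero; both terms vanish. For part 2, the difference of the two Yukawas equals, after applying \eqref{eq:equality} and commuting $\square_l|_{\partial\to\mathcal{D}}$ past $\mathcal{D}_{a_{j_1}}\cdots\mathcal{D}_{a_{j_h}}$,
$$\mathrm{const}\cdot\bigl(\mathcal{D}_{a_{j_1}}\cdots\mathcal{D}_{a_{j_h}}\cdot\square_l|_{\partial_{a_m}\to\mathcal{D}_{a_m}}\,1,\ \mathcal{D}_{a_0}\,1\bigr)_\xi,$$
whose first argument vanishes by fact two (the first assertion is the case $h=0$).

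For part 3, I apply $\partial_{a_m}$ to $\mathrm{Yuk}^{(3)}(\partial_{a_0},\partial_{a_n};\partial_{a_0})=\mathrm{const}\cdot(\mathcal{D}_{a_0}\mathcal{D}_{a_n}\,1,\mathcal{D}_{a_0}\,1)_\xi$ via Leibniz. Commutativity identifies the first resulting term with $\mathrm{Yuk}^{(4)}(\partial_{a_0},\partial_{a_m},\partial_{a_n};\partial_{a_0})$; the second is the cross pairing $(\mathcal{D}_{a_0}\mathcal{D}_{a_n}\,1,\mathcal{D}_{a_0}\mathcal{D}_{a_m}\,1)_\xi$. Symmetrising in $m\leftrightarrow n$, the two cross terms cancel since both arguments $\mathcal{D}_{a_0}\mathcal{D}_{a_m}\,1=t_0^2 t^m$ and $\mathcal{D}_{a_0}\mathcal{D}_{a_n}\,1=t_0^2 t^n$ lie in $\mathcal{I}_1$ (because $\Delta\subset\mathrm{int}(2\Delta)$ by reflexivity), and the induced pairing on $\mathcal{I}_1$ is the antisymmetric form $\langle\cdot,\cdot\rangle_{\mathcal{I}_1}$.

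The main obstacle is upgrading fact three from Proposition \ref{prop:flat}, which establishes Leibniz only for $\langle\cdot,\cdot\rangle_{\mathcal{I}_1}$, to the ambient pairing $(\cdot,\cdot)_\xi:\mathcal{R}_{F_a}\times H_{F_a}^1\to\mathbb{C}$ and the natural extension $\mathcal{R}_{F_a}\times\mathcal{I}_1\to\mathbb{C}$ needed in the cross terms. One handles this by observing that $(\cdot,\cdot)_\xi$ factors through the projection $\mathcal{R}_{F_a}\to H_{F_a}^2$ and, under $\rho$, coincides with the intersection pairing on $H^1(C_a)$ applied to $\mathcal{W}_1$-lifts, whose flatness and antisymmetry (and the harmlessness of the $\mathcal{F}^1\cap\mathcal{W}_1=\mathbb{C}\,t_0$ lift ambiguity, since $t_0\wedge t_0=0$) are standard features of polarised VHS.
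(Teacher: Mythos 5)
Your proof is correct and follows essentially the same route as the paper's: both reduce all three identities to Proposition \ref{prop:Ahyper}(2) (the operators $\mathcal{T}_i$ and $\square_l$ annihilate $1$ in $\mathcal{R}_{F_a}[a]$), the commutativity of the $\mathcal{D}_{a_m}$, and the flatness of the pairing from Remark \ref{remark:xi-condition} and eq.~\eqref{eq:equality}. The paper phrases part 1 via the basis expansion of $\nabla_{\theta_{a_{m_1}}}\cdots\nabla_{\theta_{a_{m_{k-1}}}}\omega_0$ with $\mathcal{T}_i$-invariant coefficients instead of a termwise Leibniz rule, but this is the same computation, and your closing paragraph correctly isolates and disposes of the only genuine subtlety (the ambiguity in the lift $(\cdot)'$), which the paper leaves implicit.
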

\begin{proof} Let $\Theta_{a_m}:=a_m \mathcal{D}_{a_m}$.
\\
1. 
Notice that
for $\nabla_{\theta_{a_{m_1}}}\cdots \nabla_{\theta_{a_{m_{k-1}}}}\omega_0=
\rho(
\Theta_{a_{m_1}}\cdots \Theta_{a_{m_{k-1}}}1)
$
is expressed  in the form (cf. \eqref{reflexiveRF})
$$
\alpha_{2,0} \rho(\Theta_{a_{0}}^2 1 )+
\sum_{m\in A'(\Delta)}\alpha_{1,m} \rho(\Theta_{a_0}\Theta_{a_m}1)+
\alpha_{1,0}\rho(\Theta_{a_0}1)+ \alpha_{0,0}\rho(1)~
$$
where the coefficients satisfy
$$
\mathcal{T}_i\alpha_{2,0}=
\mathcal{T}_i\alpha_{1,0}=
\mathcal{T}_i\alpha_{1,m}=
\mathcal{T}_i\alpha_{0,0}=0~\quad (i=0,1,2).
$$
By Definition \ref{definition:Yukawa2}, we have
$$
\mathrm{Yuk}^{(k)}(\theta_{a_{m_1}},\ldots,\theta_{a_{m_{k-1}}};\theta_{a_0})
=\int_{C_a} \alpha_{2,0} \rho(\Theta_{a_{0}}^2 1 )
\wedge \rho(\Theta_{a_0}1).
$$
Then the statement follows from
Proposition \ref{prop:Ahyper}-2.
\\
The statements 
2 and 3 follow from Proposition \ref{prop:Ahyper}-2 and 
Definition \ref{definition:Yukawa2}.
\end{proof}
%%%%%%%%%%%%%%%%%%%%%%%%%%%%%%%%%%%%%%%%%%%%%%%%%%%%%%%%%%%%%%%%%%%%%%%%%%%%%%%%%%%%%%%%%%%%%%
\subsection{Yukawa coupling for Quotient Family}
\label{sec:QuotientFamily}
%%%%%%%%%%%%%%%%%%%%%%%%%%%%
Consider the action of $\T^3$ on $\mathbb{L}_{{\rm reg}}(\Delta)$:
$$
\T^3\times \mathbb{L}_{{\rm reg}}(\Delta)\to \mathbb{L}_{{\rm reg}}(\Delta)~,\quad
(\lambda_0,\lambda_1,\lambda_2)\cdot F_a(t_1,t_2)
\mapsto \lambda_0 F_a(\lambda_1 t_1,\lambda_2 t_2)~.
$$
Let $\moduli$ be the geometric invariant theory 
quotient of $\mathbb{L}_{{\rm reg}}(\Delta)$ by this action%
\footnote{Any $a\in \mathbb{L}_{{\rm reg}}(\Delta)$ is stable
in the sense of the geometric invariant theory (cf. \cite[Definition 10.5]{Batyrev}).}.
Denote the quotient map by $q:\mathbb{L}_{{\rm reg}}(\Delta)\to\moduli$.

Since $\T^3$ acts  as automorphisms on $\mathcal{Z}$,
we also have a family of affine curves
\begin{equation}\label{quotient-family}
\pi:\mathcal{Z}/\T^3\to \moduli~.
\end{equation}
(Similarly we can construct the quotient family for
the open threefold $Z^{\circ}_a$.)

The differential equation \eqref{eq:yukawa1} implies that
$\mathrm{Yuk}^{(k)}(\theta_{a_{m_1}},\ldots,\theta_{a_{m_{k-1}}};\theta_{a_0})$
depends on the parameter $a$ only through
$\T^3$-invariant combinations. Thus we can define
the Yukawa coupling for the quotient family as follows.
Let $T^0\mathcal{M}(\Delta)$ 
be the subbundle of 
the holomorphic tangent bundle $T\mathcal{M}(\Delta)$ generated by
$q_*\theta_{a_0}$.
\begin{definition}
We define a map
$$\mathrm{Yuk}^{(k)}_{\moduli}:
\underbrace{T\moduli 
\times \cdots \times
T\moduli}_{(k-1)\text{ times }}
\times T^0\moduli
\to \mathcal{O}_{\moduli}
$$
by 
$$
\mathrm{Yuk}^{(k)}_{\moduli}(A_1,\ldots, A_{k-1};A_k)=
\mathrm{Yuk}^{(k)}(A_1',\ldots,A_{k-1}';A_k'),
$$
where $A_i'$ are $\T^3$-invariant vector fields on
$\mathbb{L}_{{\rm reg}}(\Delta)$ such that
$q_*A_i'=A_i$.
The case $k=3$ is called the Yukawa coupling
and denoted by $\mathrm{Yuk}_{\moduli}$.
(We may omit the subscript $\moduli$.)
\end{definition}

In the rest of this subsection,
we rewrite the differential equations for the Yukawa coupling
(Proposition \ref{prop:Yukawa-equation1}) obtained in the previous section 
to the setting of the quotient family.
We fix a local coordinates of $\moduli$ of a particular class:
take a basis $l^{(i)}$ ($1\leq i \leq  l(\Delta)-3)$)
of the lattice of relations $L(\Delta)$.
Then $$
z_i=a^{l^{(i)}}~\qquad (1\leq i\leq l(\Delta)-3)
$$ 
form a local coordinate system on some open subset in $\moduli$.
We use the shorthand notation
\begin{equation}\nonumber
\begin{split}
&\theta_i:=\theta_{z_i}~,\quad
\theta_0:=q_*\theta_{a_0}=\sum_{i=1}^{l(\Delta)-3}
l^{(i)}_0\theta_{i}~,\quad
\nabla_i:=\nabla_{\theta_{z_i}}~,\quad
\nabla_0:=\nabla_{\theta_0}~.
\end{split}
\end{equation}
Let $\mathbf{D}$ be the set of differential operators on (some open set of)
$\moduli$, consisting of 
$$
\theta_{i_1}\cdots \theta_{i_k}\mathcal{L}_l\, , \quad
(k\geq 0,~~ 1\leq i_1,\ldots,i_k\leq l(\Delta)-3,~~ l\in L(\Delta)).
$$ 
Here $\mathcal{L}_l$ is defined by
$$
\mathcal{L}_l=q_* \Big(\prod_{m;l_m>0} a_m^{l_m}\Big) \square_l~.
$$
\begin{example}
In the case of polyhedron $\#$1 (see Example \ref{example:P2Ahyper}), 
we have the coordinate
$z=a^{(-3,1,1,1)}=\frac{a_1a_2a_3}{a_0^3}$ and
$\theta_0:=q_*\theta_{a_0}=-3\theta_z$. Then
\begin{equation}\label{eq:PF-P2}
\mathcal{L}_{(-3,1,1,1)}=\theta_z^3+3z\theta_z(3\theta_z+1)(3\theta_z+2)~,
\end{equation}
and $\mathbf{D}$ is generated by $\theta_z^k \mathcal{L}_{(-3,1,1,1)}$ ($k\geq 0$).
\end{example}

For $0\leq i_1,\ldots,i_k\leq l(\Delta)-3$,
we define 
\begin{equation}\label{def:Yij0}
Y_{i_1\ldots i_k\,;\,0}:=
\mathrm{Yuk}^{(k+1)}(\theta_{i_1},\ldots,\theta_{i_k};
\theta_{0})~.
\end{equation}

Proposition \ref{prop:Yukawa-equation1} implies the following
\begin{lemma} \label{prop:diffyukawa1}
1. 
Let $\mathcal{L}\in \mathbf{D}$
and let $U_{i_1,\ldots,i_k}\in \mathbb{C}(z)$ be
the coefficients of $\theta_{i_1}\ldots\theta_{i_k}$ in $\mathcal{L}$,
i.e.
\begin{equation}\nonumber 
\mathcal{L}=\sum_{k\geq 1}
\sum_{ i_1,\ldots,i_k} U_{i_1,\ldots,i_k}
\theta_{i_1}\cdots \theta_{i_k}~\quad 
(U_{i_1,\ldots,i_k}\in\mathbb{C}(z)).
\end{equation}
Then the Yukawa coupling satisfies
\begin{equation}\nonumber
\sum_{k\geq 2}\,\sum_{i_1,\ldots,i_k}U_{i_1\ldots i_k}  \,
Y_{i_1\ldots i_k;0}=0~.
\end{equation}
2. For $0\leq i,j\leq l(\Delta)-3$,
\begin{equation}\nonumber 
Y_{ij0;0}=\frac{1}{2}(\theta_i Y_{j0;0}+\theta_j Y_{i0;0})~.
\end{equation} 
\end{lemma}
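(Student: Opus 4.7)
The plan is to derive both parts from two ingredients: $\omega_0$ satisfies the $A$-hypergeometric system in the Gauss--Manin sense (the Corollary following Lemma \ref{prop:GMrelative}) and the Leibniz rule of Remark \ref{remark:xi-condition} for the intersection pairing on $H^1(C_a)$. All calculations are carried out with $T^3$-invariant objects on $\mathbb{L}_{{\rm reg}}(\Delta)$ and then descended to $\moduli$; this is legitimate because the $T^3$-action preserves $\omega_0$ and acts by automorphisms on $\mathcal{Z}$, so both the Gauss--Manin connection and the fiberwise intersection pairing descend to the quotient family \eqref{quotient-family}.

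For Part 1, the first step is to observe that every $\mathcal{L}\in\mathbf{D}$ annihilates $\omega_0$. Since $l\in L(\Delta)$, the operator $\bigl(\prod_{m;\,l_m>0}a_m^{l_m}\bigr)\square_l$ is $T^3$-invariant on $\mathbb{L}_{{\rm reg}}(\Delta)$; as $\square_l|_{\partial_{a_m}\to\nabla_{\partial_{a_m}}}\omega_0=0$ by the cited corollary, this operator annihilates $\omega_0$, and after pushdown $\mathcal{L}_l\omega_0=0$ on $\moduli$. Prepending further $\nabla_{\theta_i}$'s preserves this vanishing, so indeed $\mathcal{L}\omega_0=0$ for every $\mathcal{L}\in\mathbf{D}$. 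Expanding $\mathcal{L}=\sum_{k\geq 1}\sum U_{i_1\ldots i_k}\theta_{i_1}\cdots\theta_{i_k}$ and pairing $(\mathcal{L}\omega_0)'=0$ against $\nabla_{\theta_0}\omega_0\in\mathcal{F}^1\mathcal{W}_1$ gives
\[
0=\int_{C_a}(\mathcal{L}\omega_0)'\wedge\nabla_{\theta_0}\omega_0
=\sum_{k\geq 1}\sum_{i_1,\ldots,i_k}U_{i_1\ldots i_k}\,Y_{i_1\ldots i_k;\,0};
\]
the $k=1$ terms drop out because $Y^{(2)}\equiv 0$ by the Remark after Definition \ref{definition:Yukawa2}, yielding the claim.

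For Part 2, the crucial observation is that since $\nabla_{\theta_0}\omega_0\in\mathcal{F}^1\mathcal{W}_1$ and $\nabla$ preserves the weight filtration, all iterated derivatives $\nabla_{\theta_{i_1}}\cdots\nabla_{\theta_{i_r}}\nabla_{\theta_0}\omega_0$ remain in $\mathcal{W}_1\cong H^1(C_a)$; consequently the prime operation in $Y_{j0;0}$, $Y_{i0;0}$ and $Y_{ij0;0}$ is redundant and these couplings are ordinary intersection numbers on the elliptic curve $C_a$. Applying Remark \ref{remark:xi-condition}, extended to the vector field $\theta_i$ by $\mathbb{C}$-linearity, to $Y_{j0;0}=\int_{C_a}\nabla_{\theta_j}\nabla_{\theta_0}\omega_0\wedge\nabla_{\theta_0}\omega_0$, and using commutativity of the coordinate fields $\theta_i,\theta_j,\theta_0$ so that $\nabla_{\theta_i}\nabla_{\theta_j}\nabla_{\theta_0}\omega_0$ is symmetric in the three directions, I would obtain
\[
\theta_iY_{j0;0}=Y_{ij0;0}+\int_{C_a}\nabla_{\theta_j}\nabla_{\theta_0}\omega_0\wedge\nabla_{\theta_i}\nabla_{\theta_0}\omega_0.
\]
Symmetrizing in $i\leftrightarrow j$, the two extra integrals cancel by antisymmetry of the cup product on $H^1(C_a)$ while the Yukawa terms combine to $2Y_{ij0;0}$, producing the stated identity. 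No serious obstacle arises beyond the descent bookkeeping, which has already been handled.
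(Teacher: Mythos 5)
Your proof is correct and follows essentially the same route as the paper: both parts ultimately rest on the fact that $\omega_0$ satisfies the $A$-hypergeometric system under the Gauss--Manin connection (so every $\mathcal{L}\in\mathbf{D}$ annihilates $\omega_0$, killing the terms of order $\geq 2$ after pairing) together with the flatness of the intersection pairing of Remark \ref{remark:xi-condition}, which is exactly the content of Proposition \ref{prop:Yukawa-equation1} from which the paper deduces the lemma. Your treatment of part 2 merely spells out the symmetrization and the redundancy of the prime operation on $\mathcal{W}_1$, details the paper leaves implicit.
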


\begin{example}\label{example:p2yukawa}
Let $\Delta$ be the polyhedron $\#$1 in Figure \ref{fig:Hirzebruch}.
Applying the above Lemma to the differential operator
\eqref{eq:PF-P2}, we obtain the equation
\begin{equation}\label{eq:p2yukawa2} 
(1+27z)\theta_z \mathrm{Yuk}(\theta_z,\theta_z;\theta_z)
+27z \mathrm{Yuk}(\theta_z,\theta_z;\theta_z)=0~,
\end{equation}
which implies
\begin{equation}\nonumber
\mathrm{Yuk}(\theta_z,\theta_z;\theta_z)=-\frac{c}{3(1+27z)}
\end{equation}
where $c$ is some nonzero constant.
This result is the same as 
Example \ref{p2yukawa1}.

\begin{remark}\label{p2wronskian}
Let $t,\partial_S F$ be the solutions \eqref{eq:PFsolution}
of the $A$-hypergeometric 
system  associated to the 
polyhedron $\#$1 in Figure \ref{fig:Hirzebruch}. 
Then we have
\begin{equation}
\mathrm{Yuk}(\partial_t,\partial_t;\partial_t)\propto
\partial_t^2 \partial_S F~.
\end{equation}
This follows from  the multilinearity of $\mathrm{Yuk}$ and the fact that
$$
\wronskian(t,\partial_S F):=\det\begin{pmatrix}
\theta_z^2 t &\theta_z t\\
\theta_z^2\partial_S F &\theta_z \partial_S F
\end{pmatrix}
=-(\theta_z t)^3\cdot \partial_t^2\partial_S F
$$
is proportional to $\mathrm{Yuk}(\theta_z,\theta_z;\theta_z)$
since it
satisfies the same differential equation \eqref{eq:p2yukawa2}. 
\end{remark}
\end{example}
%%%%%%%%%%%%%%%%%%%%%%%%%%%%%%%%%%%%%%%%%%%%%%%
%%%%%%%%%%%%%%%%%%%%%%%%%%%%%%%%%%%%%%%%%%%
\subsection{Comments on Yukawa coupling in the local A-model and local mirror symmetry}
%%%%%%%%%%%%%%%%%%%%%%%%%%%%%%%%%%%%%%%%%%%%%
Let $\Delta$ be a $2$-dimensional reflexive polyhedron.
Consider the $2$-dimensional nonsingular complete 
smooth fan $\Sigma(\Delta^*)$
whose generators of 1-cones are $A(\Delta)\setminus \{0\}$. 
Let $\mathbb{P}_{\Sigma(\Delta^*)}$ 
be the toric surface defined by $\Sigma(\Delta^*)$.
For example, $\mathbb{P}_{\Sigma(\Delta^*)}=\mathbb{P}^2$ if $\Delta$ is the 
polyhedron $\#$1 in Figure \ref{fig:Hirzebruch}.
Take a basis $C_i$ $(1\leq i\leq l(\Delta)-3)$ of 
$H_2(\mathbb{P}_{\Sigma(\Delta^*)},\mathbb{Z})
\cong L(\Delta)$ and
let $J_i$  $(1\leq i\leq l(\Delta)-3)$ be
the dual basis.
Denote by
$t_i$ $(1\leq i\leq l(\Delta)-3)$ the coordinates on 
$H^2(\mathbb{P}_{\Sigma(\Delta^*)})$
associated to this basis.
Let $c_i$ be the coefficients of $J_i$ in $c_1(\mathbb{P}_{\Sigma(\Delta^*)})=
\sum_{i}c_i J_i$ and let $J_i\cdot J_j$ be the intersection numbers.

Let $N_{0,\beta}(\mathbb{P}_{\Sigma(\Delta^*)})$ be the genus zero
local Gromov--Witten invariant of degree $\beta$,
and define $F_{\mathrm{inst}}^{\mathbb{P}_{\Sigma(\Delta^*)}}(t)$ by
$$
F_{\mathrm{inst}}^{\mathbb{P}_{\Sigma(\Delta^*)}}(t)=
\sum_{\beta=\sum d_i C_i}
N_{\beta}^{\mathbb{P}_{\Sigma(\Delta^*)}} e^{\sum d_it_i}~.
$$
Note that $\dim H^2(\mathbb{P}_{\Sigma(\Delta^*)})=\dim 
{R}_{F_a}^1=l(\Delta)-3$.
Let $t_i(z)$ be solutions 
of the $A$-hypergeometric system with a single logarithm,
so-called the mirror maps,
and let $\partial_SF$ be a solution with double logarithms. 
(See \cite[eq.(6.22)]{CKYZ} for definitions of 
$t_i,\partial_S F$.
$\Pi_i$ there is $t_i$ here.)
Local mirror symmetry \cite{CKYZ} says that, 
under 
an appropriate identification between $t_i$'s and $t_i(z)$'s,
$\partial_SF$ is 
related to the local Gromov--Witten invariants by
$$
\partial_S F=\sum_{i,j=1}^{l(\Delta)-3}\frac{J_i\cdot J_j}{2}t_it_j
-\sum_{i=1}^{l(\Delta)-3}c_i \partial_{t_i}  
F_{\mathrm{inst}}^{\mathbb{P}_{\Sigma(\Delta^*)}}(t)~.
$$

Let $T^0H^2(\mathbb{P}_{\Sigma(\Delta^*)})$ be 
the one-dimensional subspace of $TH^2(\mathbb{P}_{\Sigma(\Delta^*)})$
spanned by $\sum_{i}c_i\partial_{t_i}$.
The local A-model Yukawa coupling  $\mathrm{Yuk}_A$
may be defined as 
a multilinear map from
$
TH^2(\mathbb{P}_{\Sigma(\Delta^*)})
\times TH^2(\mathbb{P}_{\Sigma(\Delta^*)})
\times T^0H^2(\mathbb{P}_{\Sigma(\Delta^*)})
$ 
to
$\mathcal{O}_{ H^2(\mathbb{P}_{\Sigma(\Delta^*)}) }$
given by
\begin{equation}\nonumber
\mathrm{Yuk}_A\Big(\partial_{t_i},\partial_{t_j};
\sum_{l=1}^{l(\Delta)-3}c_l \partial_{t_l}
\Big)
=\partial_{t_i}\partial_{t_j}\partial_SF~.
\end{equation}

\begin{example}\label{P2Ayukawa}
Let $\Delta$ be the polyhedron $\#$1 in Figure \ref{fig:Hirzebruch}.
As in
Remark \ref{p2wronskian},
the local A-model Yukawa coupling $\mathrm{Yuk}_A$ is proportional to
the local B-model Yukawa coupling $\mathrm{Yuk}$.
To get the equality, we set $c=1$ 
in Example \ref{example:p2yukawa}. 
\end{example}

We also see that for the other polyhedra in Figure \ref{fig:Hirzebruch}, 
the Yukawa couplings
coincide with the local A-model Yukawa couplings $\mathrm{Yuk}_A$
under the mirror maps $t_1,\, t_2$. See \S \ref{section:Examples}.
%%%%%%%%%%%%%%%%%%%%%%%%%%%%%%%%%%%%%%%%%%%%%%%%%%%%%%%%%%%%%%%%%%%%%%%%%%%%%%%%%%%%%%%%%%%%%%
%%%%%%%%%%%%%%%%%%%%%%%%%%%%%%%%%%%%%%%%%%%%%%%
\section{Holomorphic anomaly equation}\label{section:HAE}
%%%%%%%%%%%%%%%%%%%%%%%%%%%%%%%%%%%%%%%%%%%%%%%
%%%%%%%%%%%%%%%%%%%%%%%%%%%%%%%%%%%%%%%%%%%%%%%%%%%%%%
\subsection{Analogue of Special K\"ahler Geometry}
%%%%%%%%%%%%%%%%%%%%%%%%%%%%%%%%%%%%%%%%%%%%%%%%%%
We propose an analogue of the special geometry relation
for $\moduli$. 
Consider the quotient family $\pi:\mathcal{Z}/\T^3\to \moduli$.
We use the same notations
$z_i,\theta_0,\theta_i,\nabla_0,\nabla_i$ 
as in \S \ref{sec:QuotientFamily}.
Let 
\begin{equation}\label{def:phi}
\phi:=\nabla_{0}\omega_0 \in H^1(C_z)~.
\end{equation}

As in \eqref{def:Yij0}, we set 
$$
Y_{i\,0;0}=\sqrt{-1}\int_{C_z}\nabla_i \phi\wedge \phi
\quad
(0\leq i\leq l(\Delta)-3)~.
$$
We also set
$$
G_{0\overline{0}}:=-\sqrt{-1}\int_{C_z} \phi\wedge \overline{\phi}~.
$$
This defines a Hermitian metric on 
$T^0\mathcal{M}(\Delta)$ such that the norm of $\theta_0$
is $G_{0\overline{0}}$.

By the definition of $G_{0\overline{0}}$, 
$Y_{00;0}$ and $Y_{i\,0;0}$, we have the following
\begin{lemma}\label{prop:specialgeometry}
\begin{equation}
\begin{split}\nonumber
(1) \quad &
\nabla_i\phi=\frac{\theta_i G_{0\overline{0}}}{G_{0\overline{0}}}
\phi
+\frac{Y_{i0;0}}{G_{0\overline{0}}}\overline{\phi}~,\\
(2)\quad & 
\overline{\theta}_j \frac{\theta_iG_{0\overline{0}}}{G_{0\overline{0}}}=
-\frac{Y_{i0;0}\overline{Y}_{j0;0}}{G_{0\overline{0}}^2}~.
\end{split}
\end{equation}
Let
\begin{equation}\nonumber
\kappa:=\theta_0\frac{\theta_0G_{0\overline{0}}}{G_{0\overline{0}}}+
  \Big(\frac{\theta_0 G_{0\overline{0}}}{G_{0\overline{0}}}\Big)^2 
  -\frac{\theta_0 Y_{00;0}}{Y_{00;0}}
\frac{\theta_{0}G_{0\overline{0}}}{G_{0\overline{0}}}~.
\end{equation}
Then
\begin{equation}\begin{split}\nonumber
(3)\quad &\overline{\theta}_j\kappa =0  \qquad (1\leq j\leq l(\Delta)-3)~,
\\
(4) \quad & 
\nabla_0^2 \phi=\kappa \phi+\frac{\theta_0 Y_{00;0}}{Y_{00;0}}\nabla_0 \phi~.
\end{split}
\end{equation}
\end{lemma}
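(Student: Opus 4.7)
The plan is to work in the Hodge decomposition $H^1(C_z) = \mathbb{C}\phi \oplus \mathbb{C}\overline{\phi}$, which is available because $C_z$ is an elliptic curve and $\phi \in H^{1,0}(C_z)$ is a nonzero $(1,0)$-form, and to exploit the flatness of the Gauss--Manin connection. The crucial observation is that, since $\phi$ corresponds (via the isomorphism $\mathcal{W}_1 H^2(\mathbb{T}^2, C_z^\circ) \cong H^1(C_z)$ of Theorem \ref{thm:relationship}) to a holomorphic section of the Hodge subbundle $\mathcal{F}^1 \cap \mathcal{W}_1$, we have $\overline{\nabla}_j \phi = 0$; dually, $\nabla_i \overline{\phi} = 0$. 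Moreover, $\nabla$ preserves the weight filtration, so $\nabla_i \phi$ and $\nabla_0^2 \phi$ remain in $\mathcal{W}_1 \cong H^1(C_z)$ and admit expansions in the basis $\{\phi, \overline{\phi}\}$.

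For (1), write $\nabla_i \phi = A_i \phi + B_i \overline{\phi}$. Differentiating $G_{0\overline{0}} = -\sqrt{-1}\int_{C_z} \phi \wedge \overline{\phi}$ by $\theta_i$ and using $\nabla_i \overline{\phi} = 0$ gives $\theta_i G_{0\overline{0}} = -\sqrt{-1} A_i \int \phi \wedge \overline{\phi} = A_i G_{0\overline{0}}$, while $\phi \wedge \phi = 0$ yields $Y_{i0;0} = \sqrt{-1}\int \nabla_i \phi \wedge \phi = \sqrt{-1} B_i \int \overline{\phi} \wedge \phi = B_i G_{0\overline{0}}$; this identifies the coefficients as claimed.

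For (2), apply $\overline{\nabla}_j$ to (1). By flatness $[\nabla_i, \overline{\nabla}_j] = 0$ and $\overline{\nabla}_j \phi = 0$, the left-hand side $\overline{\nabla}_j \nabla_i \phi = \nabla_i \overline{\nabla}_j \phi$ vanishes. On the right, using $\overline{\nabla}_j \phi = 0$ and the complex conjugate of (1), $\overline{\nabla}_j \overline{\phi} = \overline{A}_j \overline{\phi} + \overline{B}_j \phi$, comparison of coefficients of $\phi$ and $\overline{\phi}$ gives $\overline{\theta}_j A_i + B_i \overline{B}_j = 0$, which is exactly (2). For (4), apply $\nabla_0$ to the $i=0$ case of (1) and use $\nabla_0 \overline{\phi} = 0$ to obtain $\nabla_0^2 \phi = (\theta_0 A_0 + A_0^2) \phi + (\theta_0 B_0 + A_0 B_0) \overline{\phi}$; substitute $\overline{\phi} = (\nabla_0 \phi - A_0 \phi)/B_0$ and simplify using the identity $\theta_0 B_0 / B_0 = \theta_0 Y_{00;0}/Y_{00;0} - A_0$, which follows from $B_0 = Y_{00;0}/G_{0\overline{0}}$. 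The coefficient of $\nabla_0 \phi$ then becomes $\theta_0 Y_{00;0}/Y_{00;0}$, and the coefficient of $\phi$ equals precisely $\kappa$.

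Finally, (3) follows by applying $\overline{\nabla}_j$ to (4). The left-hand side $\overline{\nabla}_j \nabla_0^2 \phi = \nabla_0^2 \overline{\nabla}_j \phi$ vanishes by flatness and $\overline{\nabla}_j \phi = 0$; on the right, using $\overline{\nabla}_j \phi = 0$ and $\overline{\nabla}_j \nabla_0 \phi = \nabla_0 \overline{\nabla}_j \phi = 0$, only derivatives of the coefficients survive, yielding $0 = (\overline{\theta}_j \kappa) \phi + (\overline{\theta}_j \mu) \nabla_0 \phi$ with $\mu = \theta_0 Y_{00;0}/Y_{00;0}$. Since $B_0 = Y_{00;0}/G_{0\overline{0}} \neq 0$, the vectors $\phi$ and $\nabla_0 \phi = A_0 \phi + B_0 \overline{\phi}$ are linearly independent, forcing $\overline{\theta}_j \kappa = 0$. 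The main technical point to verify carefully is the identity $\nabla_i \overline{\phi} = 0$ (and its conjugate $\overline{\nabla}_j \phi = 0$): it reflects the holomorphic structure on the Hodge bundle, in the sense that in a flat local frame $\phi$ has holomorphic components while $\overline{\phi}$ has antiholomorphic ones, so a holomorphic derivative annihilates the latter. Combined with the preservation of $\mathcal{W}_1$ by $\nabla$, this ensures that all expansions are genuine equalities in $H^1(C_z)$ and that the integration-by-parts manipulations are justified.
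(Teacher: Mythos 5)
Your proof is correct and takes the approach the paper intends: the authors give no argument beyond ``by the definition of $G_{0\overline{0}}$, $Y_{00;0}$ and $Y_{i0;0}$,'' and your computation --- expanding $\nabla_i\phi$ in the frame $\{\phi,\overline{\phi}\}$ of $\mathcal{W}_1\cong H^1(C_z)$, then using holomorphicity $\overline{\nabla}_j\phi=0$, flatness of the Gauss--Manin connection, and its compatibility with the intersection pairing --- is exactly the standard special-geometry argument being invoked. The one hypothesis you use implicitly beyond these, namely $Y_{00;0}\neq 0$ (needed for the linear independence of $\phi$ and $\nabla_0\phi$ in step (3)), is already built into the definition of $\kappa$, so there is no gap.
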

The second equation is analogous to the special geometry equation \cite{BCOV}.
The third equation is an analogue of \cite[eq.(3.2)]{YamaguchiYau}.

\begin{example}
Let $\Delta$ be the polyhedron $\#$1 in Figure \ref{fig:Hirzebruch}.
By comparing the fourth equation of the above lemma and
the differential operator \eqref{eq:PF-P2},
we have $$
Y_{00;0}=\frac{9}{1+27z}~,\qquad
\kappa= -\frac{54z}{1+27z}\, .
$$
\end{example}
%%%%%%%%%%%%%%%%%%%%%%%%%%%%%%%%%%%%%%%%%%%%%%%%%%%%%%%%%%%%%%%%%%%%%%%%%%%%%%%%%%%%%%%%%%%%%%
%%%%%%%%%%%%%%%%%%%%%%%%%%%%%%%%%%%%%
\subsection{Proposal of local holomorphic anomaly equation}
%%%%%%%%%%%%%%%%%%%%%%%%%%%%%%%%%%%%%
We propose how to adapt BCOV's 
holomorphic anomaly equation \cite{BCOV} to the local B-model.
Let $\tilde{C}_n^g$ ($g,n\geq 0$)
be the $n$-point  B-model topological string amplitude of genus $g$.
For $2g-2+n\leq 0$, we set
\begin{equation}
\tilde{C}_0^0=\tilde{C}_1^0=\tilde{C}_2^0=0~,\quad
\tilde{C}_0^1=0~.
\end{equation}
For $2g-2+n\geq 1$, we put
$$
\tilde{C}_{n+1}^g=\Big(\theta_0-n\frac{\theta_0 G_{0\overline{0}}}{G_{0\overline{0}}}\Big)
\tilde{C}_n^g~.
$$
For $(g,n)=(0,3)$, let
\begin{equation}\label{eq:HAE0}
\tilde{C}_3^0=Y_{00;0}~.
\end{equation}
As a holomorphic anomaly equation for
 $(g,n)=(1,1)$, we propose
\begin{equation}\label{eq:HAE1}
\overline{\theta}_j\tilde{C}_1^1=-\frac{1}{2}\overline{\theta}_j 
\frac{\theta_0 G_{0\overline{0}}}{G_{0\overline{0}}}~,
\quad\text{  which implies that  }\quad
\tilde{C}_1^1=-\frac{1}{2}\frac{\theta_0 G_{\overline{0}}}{G_{0\overline{0}}}
+f^1_1(z)~.
\end{equation}
For $(g,n)=(g,0)$ $(g\geq 2)$, we propose
\begin{equation}\label{eq:HAE2}
\overline{\theta}_j \tilde{C}_0^g=
\frac{\overline{Y}_{j0;0}}{2 G_{0\overline{0}}^2}
\big( \tilde{C}_2^{g-1}+\sum_{h_1+h_2=g} \tilde{C}_1^{h_1}\tilde{C}_1^{h_2}
\big)~.
\end{equation}

For $g\geq 2$, $\tilde{C}_0^g$ can be solved by
the Feynman diagram method as in \cite{BCOV} 
or Yamaguchi--Yau's polynomial method as in \cite{YamaguchiYau}.
%%%%%%%%%%%%%%%%%%%%%%%%%%%%%%%%%%%%%%%%%%%%%%%%%%%%%%%%%%%%%%%%%%%%%%%%%%%%%%%%%%%%%%%%%%%%%%
\subsection{Solution by Feynman diagram \cite{BCOV}}
Define the propagator  $S^{00}$ by the differential equation
$\overline{\theta}_j S^{00}=\frac{\overline{Y}_{j0;0}}{G_{0\overline{0}}^2}
$.
It is easily solved by Lemma \ref{prop:specialgeometry}-(2):
$$
S^{00}=-\frac{1}{Y_{00;0}}\frac{\theta_0 G_{0\overline{0}}}{G_{0\overline{0}}}+
f_s(z)~,
$$
where $f_s(z)$ is a meromorphic function in $z$.
Put $\Delta_{00}:=-1/S^{00}$.
Then assuming \eqref{eq:HAE0}, \eqref{eq:HAE1} and \eqref{eq:HAE2}, we can show that
$$
\overline{\theta}_j
\exp\Big[
-\frac{1}{2\lambda^2}\Delta_{00}x^2+\frac{1}{2}\log \frac{\Delta_{00}}{\lambda^2}
+\sum_{n,g\geq 0}\frac{\lambda^{2g-2}}{n!}\tilde{C}_n^g x^n
\Big]=0~.
$$
This implies that $\tilde{C}_0^g$ ($g\geq 2$) can be 
computed as a sum over Feynman diagrams of genus $g$.
The difference from the one given in \cite{BCOV} is that 
there is only one propagator, $S^{00}$.
%%%%%%%%%%%%%%%%%%%%%%%%%%%%%%%%%%%%%%%%%%%%%%%%%%%%%%%%%%%%%%%%%%%%%%%%%%%%%%%%%%%%%%%%%%%%%%
\subsection{Solution by Yamaguchi--Yau's method \cite{YamaguchiYau}}
Let 
$$
A=\frac{\theta_0 G_{0\overline{0}}}{G_{0\overline{0}}}~.
$$
From the above Feynmann diagram method and
the fact that $\theta_0 A\in \mathbb{C}(z)[A]$
(see Lemma \ref{prop:specialgeometry}-(3)), 
it follows that
$\tilde{C}_n^g$ is a polynomial of degree $3g-3+n$ in $\mathbb{C}(z)[A]$.
Moreover, it satisfies
\begin{equation}
\frac{\partial \tilde{C}_0^g}{\partial A}=
-\frac{1}{2Y_{00;0}}\Big(
\tilde{C}_2^{g-1}+\sum_{h_1+h_2=g}\tilde{C}_1^{h_1}\tilde{C}_1^{h_2}
\Big)~.
\end{equation}

\begin{example}
For $(g,n)=(1,1),(1,2)$ and $(2,0)$, we have
\begin{equation}
\begin{split}
\tilde{C}_1^1&=-\frac{1}{2}A+f_1^1(z)~,\quad
\tilde{C}_2^1=A^2+A\Big(-\frac{\theta_0 Y_{00;0}}{2Y_{00;0}}-f_1^1\Big)
-\frac{\kappa}{2}+\theta_0 f_1^1~,
\\
\tilde{C}_0^2&=-\frac{1}{2Y_{00;0}}
\Big[
\frac{5}{12}A^3-\Big(\frac{\theta_0 Y_{00;0}}{4Y_{00;0}}+f_1^1\Big)A^2
+\Big(-\frac{\kappa}{2}+\theta_0 f_1^1+(f_1^1)^2\Big)A
\Big]+f_2(z)~.
\end{split}
\end{equation}
\end{example}

\begin{example}
Let $\Delta$ be the polyhedron $\#$1 in Figure \ref{fig:Hirzebruch}.
We checked that $\tilde{C}_1^1,\tilde{C}_0^2$ 
give the correct local GW invariants of $\mathbb{P}^2$ at least in small degrees.
The holomorphic ambiguities are
\begin{equation}\nonumber
\begin{split}
f_1^1(z)&=\frac{1+54z}{4(1+27z)}~,\qquad
f_2(z)=\frac{\frac{3}{40}z+\frac{783}{80}z^2+\frac{3645}{8}z^3}{(1+27z)^2}~.
\end{split}
\end{equation}
The holomorphic limit is
$$
G_{0\overline{0}}\rightarrow \theta_z t~.
$$
\end{example}

%%%%%%%%%%%%%%%%%%%%%%%%%%%%%%%%%%%%%%%%%%%%%%%%%%%%%%%%%%%%%%%%%%%%%%%%%%%%%%%%%%%%%%%%%%%%%
\subsection{Witten's geometric quantization approach}
%%%%%%%%%%%%%%%%%%%%%%%%%%%%%%%%%%%%%%%%%%%%%%%%%%%%%%%%%%%%%%%%%%%%%%%%%%%%%%%
First recall Witten's geometric quantization and its
implication for holomorphic anomaly equation \cite{Witten}.
Let $W=\mathbb{R}^{2N}$ be a vector space 
equipped with the standard symplectic form and
let $L\to W$ be a complex line bundle whose connection $1$-form
is the canonical $1$-form.
Let
$\mathcal{M}$ be the space of complex structures on $W$.
To each complex structure $J\in \mathcal{M}$,
associate the holomorphic polarization $\mathcal{H}_J$
which is a subspace of the space of square integrable 
sections $\Gamma(W, L)$ consisting of 
``holomorphic'' ones.
Then an infinite dimensional bundle $\mathcal{H}\to \mathcal{M}$ is obtained.
Witten found a projectively flat connection on $\mathcal{H}$.
His claim is that if this is applied to
the case where $W=H^3(X^{\vee},\mathbb{R})$ is the cohomology of a
Calabi--Yau threefold $X^{\vee}$,
then BCOV's holomorphic anomaly equation appears as 
the condition for the flatness of a section of $\mathcal{H}$.

We apply Witten's idea to the case when
$W=\mathcal{W}_1 H^1(C_z^{\circ},\mathbb{R})=H^1(C_z,\mathbb{R})$
and $\mathcal{M}=\mathcal{M}(\Delta)$.
(To be precise, $\mathcal{M}(\Delta)$ is not the space of complex structures 
of $W$ but it is larger in general. However, this point does not matter in the following 
argument.)
Take $\phi,\overline{\phi}$ defined in \eqref{def:phi}
as a basis of 
$W_{\mathbb{C}}=\mathcal{W}_1H^1(C^{\circ}_z)=H^1(C_z)$
and let $x,\overline{x}$ be the associated complex coordinates.
$W$ has a symplectic form $\sqrt{-1}G_{0\overline{0}}dx\wedge d\overline{x}$ 
given by the intersection product.
Consider the  
trivial line bundle 
$L=\mathbb{C}\times W$ with the connection
$$
\delta+\frac{1}{2} G_{0\overline{0}}(xd\overline{x}-\overline{x}dx)~.
$$
Here we use $\delta$ to denote the differential on $W$.
Then the holomorphic polarization $\mathcal{H}_z$ ($z\in \mathcal{M}(\Delta)$)
is as follows:
\begin{equation}\nonumber
\begin{split}
\mathcal{H}_z&=\Big\{\Phi\in \Gamma(W,L)\mid 
\Big(\overline{\delta}_{\overline{x}}+\frac{G_{0\overline{0}}}{2}x\Big)\Phi=0
\Big\}
\\
&=\big\{\Phi\in \Gamma(W,L)\mid \Phi=\varphi(x) 
e^{-\frac{G_{0\overline{0}}}{2}x\overline{x}}~ \big\}~.
\end{split}
\end{equation}
Mimicking Witten's result, we can show that 
\begin{equation}\nonumber
\begin{split}
\theta_j \mathcal{H}\subset \mathcal{H}~,\qquad
&\Bigg(\overline{\theta}_j-\frac{\overline{Y}_{j0;0}}{2G_{0\overline{0}}^2}
\Big(\delta_x-\frac{G_{0\overline{0}}}{2}\overline{x}\Big)^2
\Bigg)\mathcal{H}\subset \mathcal{H}~.
\end{split}
\end{equation}
Moreover these make a projectively flat connection on $\mathcal{H}$.

If  we regard 
$$
\exp\Big[\sum_{n,g\geq 0} \frac{\lambda^{2g-2+n}}{n!}\tilde{C}_n^g x^n\Big]
\times e^{-\frac{G_{0\overline{0}}}{2}x \overline{x}}
$$
as a section of $\mathcal{H}$, then the
condition that it is a flat section
results in the following equation:
\begin{equation}\nonumber
\overline{\theta}_j \tilde{C}_n^g
=\frac{\overline{Y}_{j0;0}}{2G_{0\overline{0}}^2}
\Bigg(
\tilde{C}_{n+2}^{g-1}+\sum_{\begin{subarray}{c}h_1+h_2=g,\\0\leq m\leq n
\end{subarray}}\begin{pmatrix}n\\m\end{pmatrix}
\tilde{C}_{m+1}^{h_1}\tilde{C}_{n-m+1}^{h_2}
\Bigg)~.
\end{equation}
%%%%%%%%%%%%%%%%%%%%%%%%%%%%%%%%%%%%%%%%%%%%%%%%%%%%%%%%%%%%%%%%%%%%%%%%%%%%%%%%%%%%%%%%%%%%%%%%%%%%%%%%%%%%%%%%%%%%%%
\section{Examples}\label{section:Examples}
%%%%%%%%%%%%%%%%%%%
In this section, we consider the 
polyhedra \#2, 3, 4 in Figure \ref{fig:Hirzebruch}.
%%%%%%%%%%%%%%%%%%%%%%%%%%%%%%%%%%%%
\subsection{$\mathbb{F}_0$ case}
%%%%%%%%%%%%%%%%%%%%%%%%%%%%%%%%%%
Let $\Delta$ be the polyhedron $\#$2 in Figure \ref{fig:Hirzebruch}:
\begin{equation}\nonumber
\Delta=\text{~
the convex hull of~} \{(1,0),(0,1),(-1,0),(0,-1)\}~.
\end{equation}
\subsubsection*{{\bf $\Delta$-regularity condition}}
The $\Delta$-regularity condition for 
$F\in \mathbb{L}(\Delta)$
is as follows:
\begin{equation}
\begin{split}
&F(t_1,t_2)=a_0+a_1t_1+a_2t_2+\frac{a_3}{t_1}+\frac{a_4}{t_2}~,
\\
&a_1a_2a_3a_4\neq 0~,\quad
(a_0^2-4a_1a_3-4a_2a_4)^2-64a_1a_2a_3a_4\neq 0~.
\end{split}
\end{equation}
\subsubsection*{{\bf $\mathcal{R}_F$ and filtrations}}
We have
\begin{equation}\nonumber
\mathcal{R}_F\cong \mathbb{C}\,1\oplus\mathbb{C}\,t_0\oplus
            \mathbb{C}\,t_0t_1\oplus\mathbb{C}\,t_0^2~.
\end{equation}
The $\mathcal{I}$-filtration 
and the $\mathcal{E}$-filtration are as follows.
\begin{equation}\nonumber
\mathcal{I}_1=\mathcal{I}_2=\mathbb{C}\,t_0\oplus\mathbb{C}\,t_0^2~,\quad
\mathcal{I}_3=\mathcal{I}_1\oplus \mathbb{C}\,t_0t_1~,\quad
\quad\mathcal{I}_4=\mathcal{R}_F~.
\end{equation}
\begin{equation}\nonumber
\mathcal{E}^0=\mathbb{C}\,1~,\quad
\mathcal{E}^{-1}=\mathcal{E}^0\oplus \mathbb{C}\,t_0\oplus\mathbb{C}\,t_0t_1~,
\quad
\mathcal{E}^{-2}=\mathcal{R}_F~.
\end{equation}
%%%%%%%%%%%%%%%%%%%%%%%%%%%%%%%%%%%%%%%%%%%%%%%
\subsubsection*{{\bf MHS}}
%%%%%%%%%%%%%%%%%%%%%%%%%%%%%%%%%%%%%%%%%%%%%%%
By Theorem \ref{theorem:MHS} and \eqref{reflexiveRF},
$$
H^2(\T^2,C^{\circ}_a)=\mathbb{C}\omega_0\oplus PH^1(C^{\circ}_a)~,\quad
PH^1(C^{\circ}_a)=\mathbb{C}\,\rho (t_0)\oplus
            \mathbb{C}\rho(t_0t_1)\oplus\mathbb{C}\rho(t_0^2)~.
$$
\begin{equation}\nonumber
\mathcal{W}_1=\mathbb{C}\rho(t_0)\oplus\mathbb{C}\rho(t_0^2)~,\quad
\mathcal{W}_2=\mathcal{W}_1\oplus \mathbb{C}\rho(t_0t_1)~,\quad
\quad\mathcal{W}_4=H^2(\T^2,\mathbb{C})~.
\end{equation}
\begin{equation}\nonumber
\mathcal{E}^0=\mathbb{C}\omega_0~,\quad
\mathcal{E}^{-1}=\mathcal{E}^0\oplus \mathbb{C}\rho(t_0)
    \oplus\mathbb{C}\rho(t_0t_1)~,
\quad
\mathcal{E}^{-2}=H^2(\T^2,\mathbb{C})~.
\end{equation}
%%%%%%%%%%%%%%%%%%%%%%%%%%%%%%%%%%%%%%%%%%%%%%%
\subsubsection*{{\bf $A$-hypergeometric system}}
%%%%%%%%%%%%%%%%%%%%%%%%%%%%%%%%%%%%%%%%%%%%%%%
The lattice of relations $L(\Delta)$ (defined in \eqref{eq:lattice-relations})
is generated by two vectors
\begin{equation}\nonumber
l^{(1)}=(-2,1,0,1,0)~,\qquad l^{(2)}=(-2,0,1,0,1)~.
\end{equation}
The $A$-hypergeometric system 
is generated by the following differential operators:
\begin{equation}\nonumber 
\begin{split}
&\theta_{a_1}-\theta_{a_3}~,\quad
\theta_{a_2}-\theta_{a_4}~,\quad
\theta_{a_1}+\theta_{a_2}+\theta_{a_3}+\theta_{a_4}+\theta_{a_0}~,
\\
&\partial_{a_1}\partial_{a_3}-\partial_{a_0}^2~,
\quad
\partial_{a_2}\partial_{a_4}-\partial_{a_0}^2~.
\end{split}
\end{equation}

Take 
$$ z_1=a^{l^{(1)}}=\frac{a_{1}a_{3}}{a_0^2}~,\qquad
z_2=a^{l^{(2)}}=\frac{a_{2}a_{4}}{a_0^2}~. 
$$
These are coordinates of an open subset of $\moduli$.
We have
$\theta_0:=q_*\theta_{a_0}=-2\theta_{z_1}-2\theta_{z_2}$. 
With these coordinates, 
the above $A$-hypergeometric system reduces to the following
two differential operators of order $2$:
\begin{equation}\begin{split}\nonumber 
&\mathcal{L}_1=\theta_1^2-z_1(-2\theta_1-2\theta_2)
(-2\theta_1-2\theta_2-1)~,
\\
&\mathcal{L}_2=\theta_2^2-z_2(-2\theta_1-2\theta_2)
(-2\theta_1-2\theta_2-1)~.
\end{split}
\end{equation}
Solutions about $z_1=0,z_2=0$ are as follows.
\begin{equation}
\begin{split} \nonumber 
&\varpi(z;0)=1~,\\
t_1:=\partial_{\rho_1}&\varpi(z;\rho)|_{\rho=0}=\log z_1+2H(z_1,z_2)~,
\\
t_2:=\partial_{\rho_2}&\varpi(z;\rho)|_{\rho=0}=\log z_2+2H(z_1,z_2)~,
\\
\partial_S F:=\partial_{\rho_1}\partial_{\rho_2}
 &\varpi(z;\rho)=\log z_1\log z_2+\cdots,
\end{split}
\end{equation}
where
\begin{equation}\nonumber
\begin{split}
\varpi(z;\rho)&=\sum_{n_1,n_2\geq 0}
\frac{(2\rho_1+2\rho_2)_{2n_1+2n_2}}{(\rho_1+1)_{n_1}^2(\rho_2+1)_{n_2}^2}
z_1^{n_1+\rho_1}z_2^{n_2+\rho_2}~,
\\
H(z_1,z_2)&=\sum_{\begin{subarray}{c}n_1,n_2\geq 0\\
(n_1,n_2)\neq (0,0)\end{subarray}}
\frac{(2n_1+2n_2-1)!}{n_1!^2n_2!^2}z_1^{n_1}z_2^{n_2}~.
\end{split}
\end{equation}
%%%%%%%%%%%%%%%%%%%%%%%%%%%%%%%%%%%%%%%%%%%%%%%
\subsubsection*{{\bf Yukawa coupling}}
%%%%%%%%%%%%%%%%%%%%%%%%%%%%%%%%%%%%%%%%%%%%%%%
In this case, $\mathbf{D}$ is generated by $\mathcal{L}_1,\mathcal{L}_2$.
Applying Lemma \ref{prop:diffyukawa1} 
to $\mathcal{L}_1$, $\mathcal{L}_2$,
$\theta_0 \mathcal{L}_1$, $\theta_0\mathcal{L}_2~$,
we obtain first order partial differential equations for $Y_{i,j:0}$. 
Solving these equations, we obtain: 
\begin{equation}\begin{split}\nonumber
&Y_{0,0;0}=\frac{8c}{d(z_1,z_2)}~,
\\
&Y_{1,1;0}=\frac{8 c z_1}{d(z_1,z_2)}~,\quad
Y_{1,2;0}=\frac{c(1-4z_1-4z_2)}{d(z_1,z_2)}~,\quad
Y_{2,2;0}=\frac{8 cz_2}{d(z_1,z_2)}~,
\end{split}\end{equation} 
where $d(z_1,z_2)=(1-4z_1-4z_2)^2-64z_1z_2$
and $c\in \mathbb{C}$ is a nonzero constant.

%%%%%%%%%%%%%%%%%%%%%%%%%%%%%%%%%%%%%%%%%%%%%%%
\subsubsection*{{\bf Comparison with the local  A-model Yukawa coupling}}
%%%%%%%%%%%%%%%%%%%%%%%%%%%%%%%%%%%%%%%%%%%%%%%
We show that the Yukawa coupling  and 
the local A-model Yukawa coupling coincide under the mirror map:

\begin{equation}\label{F0yukawa2}
\mathrm{Yuk}(\partial_{t_{\alpha}},\partial_{t_{\beta}};
-2\partial_{t_{1}}-2\partial_{t_{2}})
\propto 
\partial_{t_{\alpha}}\partial_{t_{\beta}}\partial_SF~.
\end{equation}
For this purpose, let us define the ``Wronskian'' of $t_1,t_2,\partial_S F$
by
\begin{equation}\label{eq:defwronskian}
\begin{split}
\wronskian_{i_1\ldots i_k}(t_1,t_2,\partial_S F)
&:=\det
\begin{pmatrix}
\theta_{i_1}\cdots\theta_{{i_k}}t_1&
\theta_{1}t_1&\theta_{2}t_1\\
\theta_{i_1}\cdots\theta_{{i_k}}t_2&
\theta_{1}t_2&\theta_{2}t_2\\
\theta_{i_1}\cdots\theta_{{i_k}}\partial_SF&
\theta_{1}\partial_SF&\theta_{2}\partial_SF
\end{pmatrix}
\\
&=
\det\begin{pmatrix}
\theta_1t_1&\theta_2t_1\\
\theta_1t_2&\theta_2t_2
\end{pmatrix}\cdot
\sum_{\alpha,\beta=1}^2
\partial_1t_{\alpha}\cdot\partial_2 t_{\beta} \cdot 
\partial_{t_{\alpha}}\partial_{t_{\beta}}\partial_S F
~~.
\end{split}
\end{equation}
We can  show that  
Lemma \ref{prop:diffyukawa1}  
holds if we replace $\mathrm{Yuk}_{i_1,\ldots,i_k;0}$ with
$\wronskian_{i_1\ldots i_k}(t_1,t_2,\partial_S F)$%.
\footnote{
The first statement follows from
the cofactor expansion of the determinant and 
the fact that $t_1,t_2,\partial_S F$
are solutions of $\mathcal{L}=0$ for $\mathcal{L}\in \mathbf{D}$:
$$
\sum_{i_1,\ldots,i_k} 
U_{i_1\ldots i_k} \mathrm{Wr}_{i_1\ldots i_k}(t_1,t_2,\partial_S F)=
\det\begin{pmatrix}\theta_1 t_2&\theta_2 t_2\\
\theta_1 \partial_S F&\theta_2\partial_S F\end{pmatrix}\mathcal{L}t_1
-\det\begin{pmatrix}
\theta_1 t_1&\theta_2 t_1\\
\theta_1 \partial_S F&\theta_2\partial_S F
\end{pmatrix}\mathcal{L}t_2
+\det \begin{pmatrix}
\theta_1 t_1&\theta_2 t_1\\
\theta_1 t_2&\theta_2 t_2
\end{pmatrix}\mathcal{L}\partial_S F
=0~.
$$
To prove the second statement, we first 
solve $\mathcal{L}_1*=\mathcal{L}_2*=0$
and express $\theta_1^2*,\theta_2^2*$ in terms of 
$\theta_1\theta_2*,\theta_1*,\theta_2*$ 
($*=t_1,t_2,\partial_S F$).
Then if we substitute these into 
$\theta_1 \mathrm{Wr}_{11}(t_1,t_2,\partial_S F)
-\mathrm{Wr}_{110}(t_1,t_2,\partial_S F)$,
terms cancell each other and we obtain zero.
We can prove the other equations similarly.
}%
.
Therefore $\wronskian_{ij}(t_1,t_2,\partial_S F)$
must be proportional to $Y_{ij;0}$.
Then \eqref{F0yukawa2} follows from the multilinearity of $\mathrm{Yuk}$.
%%%%%%%%%%%%%%%%%%%%%%%%%%%%%%%%%%%%%%%%%%%%%%%%%%%%%%%%%%%%%%%%%%%%%%%%%%%%%%%%%%%%%%%%%%%%%%
\subsubsection*{{\bf Holomorphic ambiguities}}
%%%%%%%%%%%%%%%%%%%%%%%%%%%%%%%%%%%%%%%%
The multiplication constant of $Y_{00;0}$ is $c=1$.
From $\mathcal{L}_1,\mathcal{L}_2$, we obtain
$$
\kappa=\frac{8(z_1+z_2-6(z_1^2+z_2^2)+12z_1z_2)}{d(z_1,z_2)}~.
$$
We checked that $\tilde{C}_1^1,\tilde{C}_0^2$ 
give the correct local GW invariants of $\mathbb{F}_0$
for small degrees.
The holomorphic ambiguities are
\begin{equation}\nonumber
\begin{split}
f_1^1(z)&=-\frac{1}{12}\frac{\theta_0 d(z_1,z_2)}{d(z_1,z_2)}+\frac{1}{6}~,
\quad
f_2(z)=\frac{1}{d(z_1,z_2)^2}\Big(\sum_{i,j=0}^5 b_{ij} z_1^iz_2^j\Big)~.
\end{split}
\end{equation}
(The  numerator of $f_2(z)$ is omitted because it is long.)
As the holomorphic limit, we take 
$$
G_{0\overline{0}}\rightarrow 1-\theta_0 H(z_1,z_2).
$$
%%%%%%%%%%%%%%%%%%%%%%%%%%%%%%%%%%%%%%%%%%%%%%%%%%%%%%%%%%%%%%%%%%%%%%%%%%%%%%%%%%%%%%%%%%%%%%
%%%%%%%%%%%%%%%%%%%%%%%%%%%%%%%%%
\subsection{$\mathbb{F}_1$ case}
%%%%%%%%%%%%%%%%%%%%%%%%%%%%%%%
Let $\Delta$ be the polyhedron $\#$3 in Figure \ref{fig:Hirzebruch}:
\begin{equation}\nonumber
\Delta=\text{~
the convex hull of~} \{(1,0),(0,1),(-1,0),(-1,-1)\}~.
\end{equation}
%%%%%%%%%%%%%%%%%%%%%%%%%%%%%%%%%
\subsubsection*{{\bf $\Delta$-regularity}}
%%%%%%%%%%%%%%%%%%%%%%%%%%%%%%%%%
The $\Delta$-regularity condition for 
$F\in \mathbb{L}(\Delta)$
is as follows:
\begin{equation}\nonumber
\begin{split}
&F(t_1,t_2)=a_0+a_1t_1+a_2t_2+\frac{a_3}{t_1}+\frac{a_4}{t_1t_2}~,
\\
&a_1a_2a_3a_4\neq 0~,\quad
a_3(a_0^2-4a_1a_3)^2-a_2a_4(a_0^3-36a_0a_1a_3+27a_1a_2a_4)\neq 0~.
\end{split}
\end{equation}
\subsubsection*{{\bf $\mathcal{R}_F$, $\mathcal{I}$-filtration, $\mathcal{E}$-filtration
and MHS}} 
These are the same as the $\mathbb{F}_0$-case.
%%%%%%%%%%%%%%%%%%%%%%%%%%%%%%%%%%%%%%%%%%%%%%%%%%%%%%%%%%%%%%%%%%%%%%%%%%%%%%%%%%%%%%%%%%%%%%
\subsubsection*{{\bf $A$-hypergeometric system}}
%%%%%%%%%%%%%%%%%%%%%%%%%%%%%%%%%%%%%
The lattice of relations $L(\Delta)$
is generated by two vectors
\begin{equation}\nonumber
l^{(1)}=(-2,1,0,1,0)~,\qquad l^{(2)}=(-1,0,1,-1,1)~.
\end{equation}
The $A$-hypergeometric system 
is generated by the following differential operators:
\begin{equation}\nonumber 
\begin{split}
&\theta_{a_1}-\theta_{a_3}-\theta_{a_4}~,\quad
\theta_{a_2}-\theta_{a_4}~,\quad
\theta_{a_1}+\theta_{a_2}+\theta_{a_3}+\theta_{a_4}+\theta_{a_0}~,
\\
&\partial_{a_1}\partial_{a_3}-\partial_{a_0}^2~,
\quad
\partial_{a_2}\partial_{a_4}-\partial_{a_0}\partial_{a_3}~.
\end{split}
\end{equation}

Take 
$$ z_1=a^{l^{(1)}}=\frac{a_{1}a_{3}}{a_0^2}~,\qquad
z_2=a^{l^{(2)}}=\frac{a_{2}a_{4}}{a_0a_3}~. 
$$
These are coordinates of an open subset of $\moduli$.
We have
$\theta_0:=
q_*\theta_{a_0}=-2\theta_{z_1}-\theta_{z_2}$. 

With these coordinates, 
the $A$-hypergeometric system reduces to the following 
two differential operators of order $2$:
\begin{equation}\begin{split} \nonumber 
&\mathcal{L}_1=\theta_1(\theta_1-\theta_2)-z_1(-2\theta_1-\theta_2)
(-2\theta_1-\theta_2-1)~,
\\
&\mathcal{L}_2=\theta_2^2-z_2(-2\theta_1-\theta_2)(\theta_1-\theta_2)~.
\end{split}
\end{equation}
Solutions about $z_1=0,z_2=0$ are as follows.
\begin{equation}
\begin{split}\nonumber 
&\varpi(z;0)=1~,\\
t_1:=\partial_{\rho_1}&\varpi(z;\rho)|_{\rho=0}=\log z_1+2H(z_1,z_2)~,
\\
t_2:=\partial_{\rho_2}&\varpi(z;\rho)|_{\rho=0}=\log z_2+H(z_1,z_2)~,
\\
\partial_S F:=\Big(\frac{1}{2}&\partial_{\rho_1}^2+\partial_{\rho_1}\partial_{\rho_2}\Big)
 \varpi(z;\rho)~,
\end{split}
\end{equation}
where
\begin{equation}\nonumber
\begin{split}
\varpi(z;\rho)&=\sum_{n_1,n_2\geq 0}
\frac{(2\rho_1+\rho_2)_{2n_1+n_2}}{(\rho_1+1)_{n_1}(\rho_2+1)_{n_2}^2}
\frac{\Gamma(1+\rho_1-\rho_2)}{\Gamma(1+\rho_1-\rho_2+n_1-n_2)}
z_1^{n_1+\rho_1}z_2^{n_2+\rho_2}~,
\\
H(z_1,z_2)&=\sum_{\begin{subarray}{c}n_1,n_2\geq 0\\
n_1\geq n_2\end{subarray}}
\frac{(2n_1+n_2-1)!}{n_1!(n_1-n_2)! n_2!^2}(-1)^{n_2}z_1^{n_1}z_2^{n_2}~.
\end{split}
\end{equation}
Here $\Gamma(x)$ denotes the Gamma function.
%%%%%%%%%%%%%%%%%%%%%%%%%%%%%%%%%%%%%%%%%%%%%%%%%%%%%%%%%%%%%%%%%%%%%%%%%%%%%%%%%%%%%%%%%%%%%%
\subsubsection*{{\bf Yukawa coupling}}
%%%%%%%%%%%%%%%%%%%%%%%%%%%%%
\begin{equation}\begin{split}
&Y_{0,0;0}=\frac{c(8-9z_2)}{d(z_1,z_2)}
~,\quad
Y_{1,1;0}=\frac{c(1+4z_1-z_2-3z_1z_2)}{d(z_1,z_2)}~,
\\&
Y_{1,2;0}=\frac{c(1-4z_1-z_2+6z_1z_2)}{d(z_1,z_2)}~,
\quad
Y_{2,2;0}=-\frac{c(z_2(1+12z_1))}{d(z_1,z_2)}~,
\end{split}\end{equation}
where $d(z_1,z_2)=(1-4z_1)^2-z_2(1-36z_1+27z_1z_2)$
and $c\in \mathbb{C}$ is a nonzero constant.
%%%%%%%%%%%%%%%%%%%%%%%%%%%%%%%%%%%%%%%%%%%%%%%%%%%%%%%%%%%%%%
\subsubsection*{{\bf Comparison with local A-model Yukawa coupling}}
%%%%%%%%%%%%%%%%%%%%%%%%%%%%%%%%%%%%%%%%%%%%%%%%%%%%%%%%%%%%%%
As in the $\mathbb{F}_0$-case, we can show that
$$
\mathrm{Yuk}
(\partial_{t_{\alpha}},\partial_{t_{\beta}};-2\partial_{t_{1}}-\partial_{t_2})
\propto \partial_{t_{\alpha}}\partial_{t_{\beta}}\partial_S F~
\quad (1\leq \alpha,\beta\leq 2)~.
$$
%%%%%%%%%%%%%%%%%%%%%%%%%%%%%%%%%%%%%%%%
\subsubsection*{{\bf Holomorphic ambiguities}}
%%%%%%%%%%%%%%%%%%%%%%%%%%%%%%%%%%%%%%%%
The multiplication constant of $Y_{00;0}$ is $c=1$ and
$$
\kappa=\frac{2z_1(-32+192z_1+282z_2-144 z_1z_2-486 z_2^2+243 z_2^3)}{d(z_1,z_2)}~.
$$
We checked that $\tilde{C}_1^1,\tilde{C}_0^2$ 
give the correct  local GW invariants of $\mathbb{F}_1$ for small degrees.
The holomorphic ambiguities are
\begin{equation}\nonumber
\begin{split}
f_1^1(z)&=-\frac{1}{12}\frac{\theta_0 d(z_1,z_2)}{d(z_1,z_2)}+\frac{1}{6}~,
\quad
f_2(z)=\frac{1}{d(z_1,z_2)^2}\Big(\sum_{i,j=0}^7b_{ij} z_1^iz_2^j\Big)~.
\end{split}
\end{equation}
(The  numerator of $f_2(z)$ is omitted.)
As the holomorphic limit, we take 
$$
G_{0\overline{0}}\rightarrow 1-\theta_0 H(z_1,z_2).
$$
%%%%%%%%%%%%%%%%%%%%%%%%%%%%%%%%%%%%%%%%%%%%%%%%%%%%%%%%%%%%%%%%%%%%%%%%%%%%%%%%%%%%%%%%%%%%%%
%%%%%%%%%%%%%%%%%%%%%%%%%%%%%%%%%
\subsection{$\mathbb{F}_2$-case}\label{sec:F2}
%%%%%%%%%%%%%%%%%%%%%%%%%%%%%%%%%
Let $\Delta$ be the polyhedron $\# 4$ in Figure \ref{fig:Hirzebruch}:
\begin{equation}\nonumber
\Delta=\text{~
the convex hull of~} \{(1,0),(0,1),(-1,0),(-2,-1)\}~.
\end{equation}
This $\Delta$ is different from previous examples 
in that there are one integral point lying on the middle of an edge.
This case has several features different from the previous cases.
%%%%%%%%%%%%%%%%%%%%%%%%%%%%%%%%%%%%%
\subsubsection*{{\bf $\Delta$-regularity}}
%%%%%%%%%%%%%%%%%%%%%%%%%%%%%%%%%%%%%%
The $\Delta$-regularity condition for 
$F\in \mathbb{L}(\Delta)$
is as follows\footnote{
In the last equation, the first factor comes from 
a $1$-dimensional face and the second factor comes from 
the $2$-dimensional face.}:
\begin{equation}\nonumber
\begin{split}
&F(t_1,t_2)=a_0+a_1t_1+a_2t_2+\frac{a_3}{t_1}+\frac{a_4}{t_1^2 t_2}~,
\\
&a_1a_2a_3a_4\neq 0~,\quad
(a_3^2-4a_2a_4)\big((a_0^2-4a_1a_3)^2-64a_1^2a_2a_4\big)\neq 0~.
\end{split}
\end{equation}
%%%%%%%%%%%%%%%%%%%%%%%%%%%%%%%%%%%%%%%%%%%%%%
\subsubsection*{{\bf $\mathcal{R}_F$ and filtrations}}
%%%%%%%%%%%%%%%%%%%%%%%%%%%%%%%%%%%%%%%%%%%%%%%
\begin{equation}\nonumber
\mathcal{R}_F\cong \mathbb{C}\,1\oplus\mathbb{C}\,t_0\oplus
            \mathbb{C}\,\frac{t_0}{t_1}\oplus\mathbb{C}\,t_0^2~.
\end{equation}
The $\mathcal{I}$-filtration is 
\begin{equation}\nonumber
\mathcal{I}_1\mathbb{C}\,t_0\oplus\mathbb{C}\,t_0^2~,\quad
\mathcal{I}_2=\mathcal{I}_3=\mathcal{I}_1\oplus \mathbb{C}\,\frac{t_0}{t_1}~,
\quad
\quad\mathcal{I}_4=\mathcal{R}_F~.
\end{equation}
The $\mathcal{E}$-filtration is
\begin{equation}\nonumber
\mathcal{E}^0=\mathbb{C}\,1~,\quad
\mathcal{E}^{-1}=\mathcal{E}^0\oplus \mathbb{C}\,t_0\oplus\mathbb{C}\,\frac{t_0}{t_1}~,
\quad
\mathcal{E}^{-2}=\mathcal{R}_F~.
\end{equation}

%%%%%%%%%%%%%%%%%%%%%%%
\subsubsection*{{\bf MHS}}
%%%%%%%%%%%%%%%%%%%%%%%
$$
H^2(\T^2,C^{\circ}_a)=\mathbb{C}\omega_0\oplus PH^1(C^{\circ}_a)~,\quad
PH^1(C^{\circ}_a)=\mathbb{C}\,\rho (t_0)\oplus
            \mathbb{C}\rho(t_0/t_1)\oplus\mathbb{C}\rho(t_0^2)~.
$$
\begin{equation}\nonumber
\mathcal{W}_1=\mathbb{C}\rho(t_0)\oplus\mathbb{C}\rho(t_0^2)~,\quad
\mathcal{W}_2=\mathcal{W}_1\oplus \mathbb{C}\rho(t_0/t_1)~,\quad
\quad\mathcal{W}_4=H^2(\T^2,\mathbb{C})~.
\end{equation}
\begin{equation}\nonumber
\mathcal{E}^0=\mathbb{C}\omega_0~,\quad
\mathcal{E}^{-1}=\mathcal{E}^0\oplus \mathbb{C}\rho(t_0)
    \oplus\mathbb{C}\rho(t_0/t_1)~,
\quad
\mathcal{E}^{-2}=H^2(\T^2,\mathbb{C})~.
\end{equation}
%%%%%%%%%%%%%%%%%%%%%%%%%%%%%%%%%%%%%%%%%
\subsubsection*{{\bf $A$-hypergeometric system}}
%%%%%%%%%%%%%%%%%%%%%%%%%%%%%%%%%%%%%%%%%
The lattice of relations $L(\Delta)$
is generated by two vectors
\begin{equation}\nonumber
l^{(1)}=(-2,1,0,1,0)~,\qquad l^{(2)}=(0,0,1,-2,1)~,
\end{equation}
and
the $A$-hypergeometric system 
is generated by the following differential operators:
\begin{equation}\nonumber 
\begin{split}
&\theta_{a_1}-\theta_{a_3}-2\theta_{a_4}~,\quad
\theta_{a_2}-\theta_{a_4}~,\quad
\theta_{a_1}+\theta_{a_2}+\theta_{a_3}+\theta_{a_4}+\theta_{a_0}~,
\\
&\partial_{a_1}\partial_{a_3}-\partial_{a_0}^2~,
\quad
\partial_{a_2}\partial_{a_4}-\partial_{a_3}^2~.
\end{split}
\end{equation}

Take the following local coordinates of $\moduli$:
$$ z_1=a^{l^{(1)}}=\frac{a_{1}a_{3}}{a_0^2}~,\qquad
z_2=a^{l^{(2)}}=\frac{a_{2}a_{4}}{a_3^2}~. 
$$
Then
we have
$\theta_0:=q_*\theta_{a_0}=-2\theta_{z_1}$.

With these coordinates, 
the $A$-hypergeometric system reduces to the following 
two differential operators of order $2$:
\begin{equation}\begin{split}\nonumber 
&\mathcal{L}_1=\theta_1(\theta_1-2\theta_2)-z_1(-2\theta_1)
(-2\theta_1-1)~,
\\
&\mathcal{L}_2=\theta_2^2-z_2(\theta_1-2\theta_2)(\theta_1-2\theta_2-1)~.
\end{split}
\end{equation}
Solutions about $z_1=0,z_2=0$ are as follows.
\begin{equation}
\begin{split}\nonumber 
&\varpi(z;0)=1~,\\
t_1:=\partial_{\rho_1}&\varpi(z;\rho)|_{\rho=0}=\log z_1+H(z_1,z_2)-G(z_2)~,
\\
t_2:=\partial_{\rho_2}&\varpi(z;\rho)|_{\rho=0}=\log z_2+2G(z_2)~,
\\
\partial_S F:=(&\partial_{\rho_1}^2+\partial_{\rho_1}\partial_{\rho_2})
 \varpi(z;\rho)~,
\end{split}
\end{equation}
where
\begin{equation}\nonumber
\begin{split}
\varpi(z;\rho)&=\sum_{n_1,n_2\geq 0}
\frac{(2\rho_1)_{2n_1}}{(\rho_1+1)_{n_1}(\rho_2+1)_{n_2}^2}
\frac{\Gamma(1+\rho_1-2\rho_2)}{\Gamma(1+\rho_1-2\rho_2+n_1-2n_2)}
z_1^{n_1+\rho_1}z_2^{n_2+\rho_2}~,
\\
H(z_1,z_2)&=2\!\!\sum_{\begin{subarray}{c}n_1,n_2\geq 0\\
n_1\geq 2n_2\end{subarray}}
\frac{(2n_1-1)!}{n_1!(n_1-2n_2)! n_2!^2}z_1^{n_1}z_2^{n_2}~,
\\
G(z_2)&=\sum_{n_2\geq 1}\frac{(2n_2-1)!}{n_2!^2}z_2^{n_2}~.
\end{split}
\end{equation}
%%%%%%%%%%%%%%%%%%%%%%%%%%%%%%%%%%%%%%%%%%%%%%%%%%%%%%%%%%%%%%%%%%%%%%%%%%%%%%%%%%%%%%%%%%%%%%
%%%%%%%%%%%%%%%%%%%%%%%%%%%%%%%%
\subsubsection*{{\bf Yukawa coupling}}
%%%%%%%%%%%%%%%%%%%%%%%%%%%%%%%%%
\begin{equation}\begin{split}
Y_{1,1;0}=\frac{2c}{d(z_1,z_2)}~,\quad
Y_{1,2;0}=\frac{c(1-4z_1)}{d(z_1,z_2)}~,\quad
Y_{2,2;0}=-\frac{2cz_2(1-8z_1)}{(1-4z_2)d(z_1,z_2)}~,
\end{split}
\end{equation}
and $Y_{00;0}=4Y_{11;0}$
where $d(z_1,z_2)=(1-4z_1)^2-64z_1^2z_2$
and $c\in \mathbb{C}$ is a nonzero constant.
%%%%%%%%%%%%%%%%%%%%%%%%%%%%%%%%%%%%%%%%%%%%%%%%%%%%%%%%%%%%%%
\subsubsection*{{\bf Comparison with local A-model Yukawa coupling}}
%%%%%%%%%%%%%%%%%%%%%%%%%%%%%%%%%%%%%%%%%%%%%%%%%%%%%%%%%%%%%%%
We show that
\begin{equation}\label{eq:F2yukawa}
\mathrm{Yuk}(\partial_{t_{\alpha}},\partial_{t_{\beta}};-2\partial_{t_{1}})
\propto \partial_{t_{\alpha}}\partial_{t_{\beta}}\partial_S F~
\quad (1\leq \alpha,\beta\leq 2)~.
\end{equation}

Note that the Wronskian
 $\wronskian_{i_1\ldots i_k}(t_1,t_2,\partial_S F)$
defined as in \eqref{eq:defwronskian} is divisible by
$\theta_2 t_2$ due to the  fact that $t_2$ does not depend on $z_1$.
We define the modified Wronskian%
\footnote{
A reason to consider the modified Wronskian in the $\mathbb{F}_2$-case 
is that
the Wronskians do not satisfy the statement
corresponding to the second one
in Lemma \ref{prop:diffyukawa1}.
}
by 
\begin{equation}\nonumber
\begin{split}
\wronskian_{ i_1 \ldots i_k}'(t_1,t_2,\partial_SF)
&:=\wronskian_{i_1\ldots i_k}(t_1,t_2,\partial_S F)/\theta_2 t_2
\\
&=
\theta_1t_1 \cdot
\sum_{\alpha,\beta=1}^2
\partial_1t_{\alpha}\cdot\partial_2 t_{\beta} \cdot 
\partial_{t_{\alpha}}\partial_{t_{\beta}}\partial_S F
~~.
\end{split}
\end{equation}
As in the $\mathbb{F}_0$-case,
Lemma \ref{prop:diffyukawa1}  holds
if we replace $Y_{i_1\ldots i_k:0}$ by 
$\wronskian_{i_1\ldots i_k}'(t_1,t_2,\partial_S F)$.
Therefore $\wronskian_{ij}(t_1,t_2,\partial_S F)$
is proportional to the Yukawa coupling $Y_{ij;0}$.
Then \eqref{eq:F2yukawa} follows 
from the multi-linearity of $\mathrm{Yuk}$.
%%%%%%%%%%%%%%%%%%%%%%%%%%%%%%%%%%%%%%%%%%%%%%%%%%%%%%%%%%%%%%%%%%%%%%%%%%%%%%%%%%%%%%%%%%%%%%
%%%%%%%%%%%%%%%%%%%%%%%%%%%%%%%%%%%%%%%%
\subsubsection*{{\bf Holomorphic ambiguities}}
%%%%%%%%%%%%%%%%%%%%%%%%%%%%%%%%%%%%%%%%
The multiplication constant of $Y_{00;0}$ is $c=1$ and
$$
\kappa=\frac{8z_1(1-6z_1+24z_1z_2)}{d(z_1,z_2)}~.
$$
We checked that $\tilde{C}_1^1,\tilde{C}_0^2$ 
give the correct local GW invariants of $\mathbb{F}_2$ for small degrees.
The holomorphic ambiguities are
\begin{equation}\nonumber
\begin{split}
f_1^1(z)&=-\frac{1}{12}\frac{\theta_0 d(z_1,z_2)}{d(z_1,z_2)}+\frac{1}{6}~,
\quad
f_2(z)=\frac{1}{d(z_1,z_2)^2}\Big(\sum_{i,j=0}^7b_{ij} z_1^iz_2^j\Big)~.
\end{split}
\end{equation}
(The  numerator of $f_2(z)$ is omitted.)
As the holomorphic limit, we take 
$$
G_{0\overline{0}}\rightarrow 1-\theta_0 H(z_1,z_2).
$$
%%%%%%%%%%%%%%%%%%%%%%%%%%%%%%%%%%%%%%%%%%%%%%%%%%%%%%%%%%%%%%%%%%%%%%%%%%%%%%%%%%%%%%%%%%%%%%
\appendix
%%%%%%%%%%%%%%%%%%%%%%%%%%%%%%%%%%%%%%%%%%%%%%%
%%%%%%%%%%%%%%%%%%%%%%%%%%%%%%%
\section{Mixed Hodge structure of an Open threefold}
\label{section:Threefold}
%%%%%%%%%%%%%%%%%%%%%%%%%%%%%%%
%%%%%%%%%%%%%%
In this section, $\Delta$ is a $2$-dimensional reflexive polyhedron.
Let $F_a\in \mathbb{L}_{\mathrm{reg}}(\Delta)$ be a $\Delta$-regular 
Laurent polynomial. 
Define $P_a\in \mathbb{C}[t_1^{\pm},t_2^{\pm},x,y]$ by
\begin{equation}\nonumber 
P_a(t_1,t_2,x,y)=F_a(t_1,t_2)+xy~.
\end{equation}
Let $Z_a^{\circ}$ be the affine
hypersurface in $\T^2\times \mathbb{C}^2$ defined by $P_a$:
\begin{equation}\begin{split}
Z_a^{\circ}&:=
\{(t_1,t_2,x,y)\in \T^2\times \mathbb{C}^2\mid 
F_a(t_1,t_2)+xy =0\}~.
\end{split}
\end{equation}
It is easy to see that the
$\Delta$-regularity of $F_a$ implies 
the smoothness of $Z_a^{\circ}$.

The goal of the appendix is to give an explicit  description of 
the MHS on $H^3(Z^{\circ}_a)$.
First we show that $H^3(Z^{\circ}_a) \cong \mathcal{R}_{F_a}$.
Next we compactify $Z^{\circ}_a$
as a hypersurface in a smooth toric variety.
Then using this compactification, we 
compute the Hodge and weight filtrations on $H^3(Z^{\circ}_a)$.
We use Batyrev's method 
for affine hypersurfaces in algebraic tori
\cite[\S6--8]{Batyrev}
with some modifications. 
%%%%%%%%%%%%%%%%%%%%%%%%%%%%%%%%%%%%%%%%%%%%%%%%%%%%%%%%%%%%%%%%%%%%%%%%%%%%%%%%%%%%%%%%%%%%%%
\subsection{Middle cohomology $H^3(Z^{\circ}_a)$}
%%%%%%%%%%%%%%%%%%%%%%%%%%%%%%%%%%%%%%%%%%%%%%%%%%%
We have a long exact sequence
\begin{equation}\label{eq:PoincareResidueMap}
\cdots \rightarrow H^4(\T^2\times \mathbb{C}^2)\rightarrow 
H^4(\T^2\times \mathbb{C}^2\setminus Z_a^{\circ})
\stackrel{\mathrm{Res}}{\longrightarrow} 
H^3(Z_a^{\circ})\rightarrow
H^5(\T^2\times \mathbb{C}^2)\rightarrow \cdots\, .
\end{equation}
Since $H^4(\T^2\times \mathbb{C}^2)=H^5(\T^2\times \mathbb{C}^2)=0$,
the {\it Poincar\'e residue map} $\mathrm{Res}:H^4(\T^2\times \mathbb{C}^2\setminus Z_a^{\circ})\to H^3(Z_a^{\circ})$ is an isomorphism.

In the rest of this subsection, $t^m$ stands for $t_1^{m_1}t_2^{m_2}$.
By Grothendieck \cite{Grothendieck},
$H^{\bullet}(\T^2\times \mathbb{C}^2\setminus Z_a^{\circ})$ 
is isomorphic to the cohomology of
the global de Rham complex 
$(\Gamma\Omega^{\bullet}_{\T^2\times \mathbb{C}^2}(*Z_a^{\circ}),d)$
of meromorphic differential forms on $\T^2\times\mathbb{C}^2$ with poles of
arbitrary order on $Z_a^{\circ}$.
Let $R'$ be the homomorphism:
$$
R':\ring_{\Delta}\rightarrow
\Gamma\Omega^4_{\T^2\times\mathbb{C}^2}(*Z_a^{\circ})~,\quad
t_0^kt^m\mapsto 
\frac{(-1)^kk!\,t^m}{{P_a}^{k+1}}
\frac{dt_1}{t_1}\frac{dt_2}{t_2}dxdy~.
$$
\begin{proposition} \label{prop:A1}
The map $R'$ induces an isomorphism
\begin{equation}\nonumber
R': \mathcal{R}_{F_a}\overset{\cong}{\rightarrow}
H^4(\T^2\times \mathbb{C}^2\setminus Z_a^{\circ})~.
\end{equation}
\end{proposition}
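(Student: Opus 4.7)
The plan is to adapt Batyrev's Griffiths--Dwork-type argument for affine toric hypersurfaces \cite{Batyrev} to accommodate the additional $\mathbb{C}^2$-factor. By Grothendieck's theorem, $H^4(\T^2 \times \mathbb{C}^2 \setminus Z_a^{\circ})$ is the top cohomology of the global meromorphic de Rham complex with poles on $Z_a^{\circ}$; every class has a representative $\frac{Q(t_1, t_2, x, y)}{P_a^{N+1}} \omega_4$, where $\omega_4 := \frac{dt_1}{t_1} \wedge \frac{dt_2}{t_2} \wedge dx \wedge dy$ and $Q$ is Laurent in $t_1, t_2$ and polynomial in $x, y$. To show $R'$ is well-defined modulo $\sum_{i=0}^{2} \mathcal{D}_i \ring_{\Delta}$, I would verify that $R'(\mathcal{D}_i(t_0^k t^m))$ is exact for each $i$. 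For $i = 1, 2$, deleting the factor $dt_i/t_i$ yields an explicit primitive,
\begin{equation*}
d\!\left( \frac{(-1)^k k!\, t^m}{P_a^{k+1}} \frac{dt_{3-i}}{t_{3-i}} \wedge dx \wedge dy \right) = R'\!\left( \mathcal{D}_i(t_0^k t^m) \right),
\end{equation*}
using $\theta_{t_i} P_a = \theta_{t_i} F_a$. For $i = 0$, a primitive is instead found in the $x, y$ directions: a direct computation using $\partial_y P_a = x$ together with the rewriting $k\,xy - F_a = k P_a - (k+1) F_a$ gives
\begin{equation*}
d\!\left( \frac{(-1)^k k!\, t^m y}{P_a^{k+1}} \frac{dt_1}{t_1} \wedge \frac{dt_2}{t_2} \wedge dx \right) = R'\!\left( \mathcal{D}_0(t_0^k t^m) \right).
\end{equation*}

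For surjectivity, I would reduce a general representative $\frac{Q(t, x, y)}{P_a^{N+1}} \omega_4$ into the image of $R'$ in two substeps. Taking $d$ of 3-forms obtained by deleting $dx$ or $dy$ yields the key exactness identities
\begin{equation*}
\frac{k\, h(t)\, x^{a+1} y^b}{P_a^{k+1}} \omega_4 \equiv \frac{b\, h(t)\, x^a y^{b-1}}{P_a^k} \omega_4, \qquad \frac{k\, h(t)\, x^a y^{b+1}}{P_a^{k+1}} \omega_4 \equiv \frac{a\, h(t)\, x^{a-1} y^b}{P_a^k} \omega_4,
\end{equation*}
from which pure $x^a$ or $y^b$ terms with $a,b \geq 1$ are exact and diagonal terms $(xy)^c/P_a^{k+1}$ collapse, up to rational scalars, to $1/P_a^{k-c+1}$. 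Iterating leaves a representative $\frac{q(t)}{P_a^{M+1}} \omega_4$ with $q$ Laurent in $t_1, t_2$ alone. I would then apply Batyrev's toric reduction: reflexivity of $\Delta$ places $0$ in the interior, so the dilations $\Delta(M)$ exhaust $\mathbb{R}^2$ as $M$ grows; using $\mathcal{D}_0$ (which inflates the pole order at the cost of an $F_a$-multiplication) together with $\mathcal{D}_1, \mathcal{D}_2$, one brings $\frac{q(t)}{P_a^{M+1}} \omega_4$ into $R'(\ring_{\Delta})$.

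For injectivity, assume $R'(s) = d\eta$ for some $s \in \ring_{\Delta}$. Applying the same two-stage reduction to the primitive $\eta$ and keeping careful track of each exactness relation used, I would identify the resulting boundary corrections with an element of $\sum_{i=0}^2 \mathcal{D}_i \ring_{\Delta}$, hence $s = 0$ in $\mathcal{R}_{F_a}$. The main obstacle will be ensuring compatibility between the two substeps: the $x, y$-reduction shifts pole orders in $P_a$, while $\mathcal{D}_0$ couples pole order to multiplication by $F_a$, so one must verify that the two procedures can be interleaved in such a way that the kernel of the induced map is precisely $\sum_{i=0}^2 \mathcal{D}_i \ring_{\Delta}$---no spurious relations are created and none are missed. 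Once this compatibility is settled, $R': \mathcal{R}_{F_a} \overset{\cong}{\rightarrow} H^4(\T^2 \times \mathbb{C}^2 \setminus Z_a^{\circ})$ follows and, through the Poincar\'e residue \eqref{eq:PoincareResidueMap}, yields the description of $H^3(Z_a^{\circ})$ asserted in Section 5.
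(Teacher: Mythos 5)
Your strategy is in substance the same as the paper's: invoke Grothendieck's algebraic de Rham theorem, kill the $x,y$-dependence first, and then fall back on Batyrev's toric reduction to $\mathbf{S}_{\Delta}$; and your explicit primitives for $R'(\mathcal{D}_i(t_0^kt^m))$ ($i=0,1,2$) and the $x,y$-exactness identities are correct (I checked the $i=0$ case: the rewriting $kxy-F_a=kP_a-(k+1)F_a$ does produce exactly $R'(\mathcal{D}_0(t_0^kt^m))$). Where the two treatments genuinely diverge is in how the reduction is packaged. You work form by form, which makes well-definedness and surjectivity transparent but leaves injectivity as an "interleaving" problem that you explicitly do not resolve. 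The paper instead algebraizes the whole degree $3\to 4$ piece of the de Rham complex at the outset: it introduces $\mathbf{M}=\mathbf{M}_0/\mathcal{D}'_0\mathbf{M}_0$ with $\mathbf{M}_0=\mathbb{C}[t_0,t_1^{\pm},t_2^{\pm},x,y]$, identifies $\Omega^p(*Z^{\circ}_a)$ with $\mathbf{M}\otimes\wedge^p\mathbb{C}^4$ via maps $\Psi_p$, and shows $d$ corresponds to an explicit operator $\mathcal{D}'$ built from $\mathcal{D}'_1,\ldots,\mathcal{D}'_4$. The whole proposition then reduces to a purely algebraic two-step factorization of quotient modules, $\mathbf{S}_{\Delta}/\sum\mathcal{D}_i\mathbf{S}_{\Delta}\to \mathbf{L}[t_0]/\sum\mathcal{D}_i\mathbf{L}[t_0]\to (\mathbf{M}\otimes\wedge^4\mathbb{C}^4)/\mathcal{D}'(\mathbf{M}\otimes\wedge^3\mathbb{C}^4)$, in which injectivity and surjectivity are treated on the same footing and the "no spurious relations" worry becomes a kernel computation in a polynomial module rather than a bookkeeping of boundary corrections. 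That is exactly the device that closes the gap you flag at the end; to be fair, the paper itself dispatches the resulting lemma with "brute force calculation," so neither account writes out the kernel verification in full. If you adopt the module-level formulation, your explicit identities become the proof that the relevant squares commute, and your argument aligns with the paper's.
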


\begin{corollary}\label{prop:A2}
The map $R'$ and the Poincar\'e residue map give an isomorphism
$$
\rho':  \mathcal{R}_{F_a}\overset{\cong}{\rightarrow} H^3(Z_a^{\circ})~.
$$
\end{corollary}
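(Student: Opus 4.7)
The plan is to assemble the corollary as a direct composition of the two isomorphisms that are already in place. First, I would invoke the Poincaré residue isomorphism: the long exact sequence of the pair $(\mathbb{T}^2\times\mathbb{C}^2,\,\mathbb{T}^2\times\mathbb{C}^2\setminus Z_a^{\circ})$ displayed just above Proposition \ref{prop:A1} collapses because $H^{4}(\mathbb{T}^2\times\mathbb{C}^2)=H^{5}(\mathbb{T}^2\times\mathbb{C}^2)=0$. This vanishing is an immediate application of the Künneth formula together with the fact that $H^{\bullet}(\mathbb{C}^2)$ is concentrated in degree $0$ and $H^{\bullet}(\mathbb{T}^2)$ is concentrated in degrees $0,1,2$, so the total cohomology of $\mathbb{T}^2\times\mathbb{C}^2$ lives only in degrees $0,1,2$. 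Hence $\mathrm{Res}\colon H^{4}(\mathbb{T}^2\times\mathbb{C}^2\setminus Z_a^{\circ})\xrightarrow{\cong}H^{3}(Z_a^{\circ})$.

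Next, Proposition \ref{prop:A1} supplies the other half: $R'$ descends to an isomorphism $\mathcal{R}_{F_a}\xrightarrow{\cong}H^{4}(\mathbb{T}^2\times\mathbb{C}^2\setminus Z_a^{\circ})$. The definition of $\rho'$ is then the composition
\begin{equation}\nonumber
\rho':\mathcal{R}_{F_a}\xrightarrow{\;R'\;}H^{4}(\mathbb{T}^2\times\mathbb{C}^2\setminus Z_a^{\circ})\xrightarrow{\;\mathrm{Res}\;}H^{3}(Z_a^{\circ}),
\end{equation}
which is an isomorphism as a composition of isomorphisms. There is no real obstacle here beyond the Künneth vanishing and quoting Proposition \ref{prop:A1}; the content of the corollary is simply the observation that the top-degree holomorphic form $(-1)^{k}k!\,t^{m}P_{a}^{-k-1}\,\tfrac{dt_1}{t_1}\tfrac{dt_2}{t_2}dx\,dy$, after taking Poincaré residues along $Z_a^{\circ}$, realizes the class in $H^{3}(Z_a^{\circ})$ associated to $t_0^{k}t^{m}\in\mathcal{R}_{F_a}$.
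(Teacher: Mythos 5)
Your proposal is correct and follows exactly the paper's route: the paper likewise obtains the Poincar\'e residue isomorphism from the long exact sequence \eqref{eq:PoincareResidueMap} using the vanishing of $H^4$ and $H^5$ of $\mathbb{T}^2\times\mathbb{C}^2$ (your K\"unneth justification of that vanishing is a harmless elaboration), and then composes with the isomorphism of Proposition \ref{prop:A1}. Nothing is missing.
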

\begin{remark}\label{rem:dim}
For $i=0,1,2$, $\iota^* : H^i(\mathbb{T}^2\times \mathbb{C}^2)\to
H^i(Z^{\circ}_a)$ is an isomorphism where $\iota:Z^{\circ}_a \to \T^2\times \mathbb{C}^2$ 
is the inclusion. For $i\geq 4$, $H^i(Z^{\circ}_a)=0$ since $Z^{\circ}_a$ is affine. 
\end{remark}

\begin{proof}(of Proposition \ref{prop:A1}.)
We would like to compute
\begin{equation}\label{eq:H4}
\frac{\Gamma\Omega^4_{\T^2\times \mathbb{C}^2}(*Z^{\circ}_a)}
{d\Gamma\Omega^3_{\T^2\times \mathbb{C}^2}(*Z^{\circ}_a)}
\cong H^4(\T^2\times \mathbb{C}^2\setminus Z^{\circ}_a)~.
\end{equation}

Let 
\begin{equation}\nonumber
\mathbf{M}_0=\mathbb{C}[t_0,t_1^{\pm},t_2^{\pm},x,y]~,
\quad
\mathbf{M}=\mathbf{M}_0/\mathcal{D}_0'\mathbf{M}_0~,
\quad
\mathbf{L}=
\mathbb{C}[t_1^{\pm},t_2^{\pm}]~,
\end{equation}
where $\mathcal{D}'_0:\M_0\to\M_0$ is defined by
$$
\mathcal{D}'_0(t_0^k t^m x^{m_3}y^{m_4}):=
\begin{cases}(k+t_0 P_a) t_0^k t^m x^{m_3}y^{m_4}~ &(k>0)
\\
(1+t_0P_a)t_0^kt^m x^{m_3}y^{m_4}& (k=0)~
\end{cases}~~.
$$

We first rewrite the left-hand-side of \eqref{eq:H4}
using $\mathbf{M}$.
Consider the homomorphism $\Psi_0: \M_0 \to
\Omega_{\T^2\times \mathbb{C}^2}^0(*Z_a^{\circ})$ given by
\begin{equation}\nonumber
\Psi_0(t_0^kt^m x^{m_3}y^{m_4})=\begin{cases}
\frac{(-1)^{k-1}(k-1)! t^m x^{m_3}y^{m_4}}{P_a^k}~&(k\geq 1)\\
-t^m x^{m_3}y^{m_4}~&(k=0)
\end{cases}~~.
\end{equation}
Then the kernel of $\Psi_0$ is  $\mathcal{D}'_0\mathbf{M}_0$.
Therefore $\Psi_0$ induces an isomorphism $\Psi:\mathbf{M}
\stackrel{\cong}{\rightarrow} \Omega_{\T^2\times \mathbb{C}^2}^0(*Z^{\circ}_a)$.

Define the operators $\mathcal{D}'_i$ $(1\leq i\leq 4)$ acting on $\mathbf{M}$ by
\begin{equation}\nonumber
\begin{split}
\mathcal{D}'_i(t_0^kt^m x^{m_3}y^{m_4})&:=\begin{cases}
(m_i+t_0\theta_{t_i}P_a)t_0^kt^m x^{m_3}y^{m_4}&(i=1,2,~k>0)~
\\m_it_0^kt^m x^{m_3}y^{m_4}&(i=1,2~,k=0)~
\end{cases}
\\
\mathcal{D}'_3(t_0^kt^m x^{m_3}y^{m_4})&:=
\begin{cases}
(m_3+t_0\theta_{x} P_a)t_0^kt^m x^{m_3-1}y^{m_4}&(k>0)~
\\m_3 t_0^k t^m x^{m_3-1}y^{m_4}&(k=0)~
\end{cases}
\\
\mathcal{D}'_4(t_0^kt^m x^{m_3}y^{m_4})&:=
\begin{cases}
(m_4+t_0\theta_{y} P_a)t_0^kt^m x^{m_3}y^{m_4-1}&(k>0)~
\\m_4 t_0^k t^m x^{m_3}y^{m_4-1}&(k=0)~
\end{cases}
~~.
\end{split}
\end{equation}
Let $e_1,\ldots, e_4$ be the standard basis on $\mathbb{C}^4$.
For $I=\{i_1,\ldots,i_p\}\subset \{1,2,3,4\}$,  let
$e_I:=e_{i_1}\wedge\cdots\wedge e_{i_p}$.
Then we have an isomorphism
\begin{equation}\nonumber
\begin{split}
\Psi_p:&\M\otimes \wedge^p \mathbb{C}^4\stackrel{\sim}{\rightarrow} 
\Omega_{\T^2\times \mathbb{C}^2}^p(*Z^{\circ}_a)~;\quad
\sum_{i=1}^4f_i \otimes e_I\mapsto
\sum_{i=1}\Psi(f_i)\gamma(e_I)~,
\end{split}\end{equation}
where $\gamma$ is defined by
\begin{equation}\nonumber
\gamma(e_i)=\begin{cases}
\frac{dt_i}{t_i}&(i=1,2)\\
dx &(i=3)\\
dy &(i=4)
\end{cases}~~,\quad\gamma(e_I)=\gamma(e_{i_1})\wedge\cdots\wedge \gamma(e_{i_p})~.
\end{equation}
If we define  $\mathcal{D}':\mathbf{M}\otimes \wedge^3\mathbb{C}^4
\to \mathbf{M}\otimes \wedge^4 \mathbb{C}^4$ by
$$
\mathcal{D}'(f_I\otimes e_I):=\sum_{i=1}^4
\mathcal{D}'_i(f_I)\gamma(e_i\wedge e_I)~,
$$ 
we have a commutative diagram
\begin{equation}\nonumber
\begin{array}{ccc}
\M\otimes\wedge^3 \mathbb{C}^4&\stackrel{\D}{\rightarrow}&
\M\otimes \wedge^4 \mathbb{C}^4
\\
\Psi_3\downarrow&&\downarrow \Psi_4
\\
\Omega_{\T^2\times \mathbb{C}^2}^3(*Z_a^{\circ})&
\stackrel{d}{\rightarrow}
&\Omega_{\T^2\times \mathbb{C}^2}^4(*Z_a^{\circ})
  \end{array}
~.
\end{equation}
Thus we have
\begin{equation}\nonumber
\frac{\Gamma\Omega^4_{\T^2\times \mathbb{C}^2}(*Z^{\circ}_a)
}
{
d\Gamma\Omega^3_{\T^2\times \mathbb{C}^2}(*Z^{\circ}_a)}
\cong
\frac{\M\otimes \wedge^4 \mathbb{C}^4}{\D\M\otimes \wedge^3 \mathbb{C}^4}~.
\end{equation}
Then the proposition follows from the next lemma.
\begin{lemma} 
1. The homomorphism $\mathbf{L}[t_0] \to \M\otimes \wedge^4 \mathbb{C}^4$ 
given  by
$t_0^kt^m \mapsto t_0^{k+1}t^m\otimes e_1\wedge e_2\wedge e_3\wedge e_4$ 
induces an isomorphism
\begin{equation}\nonumber
 \mathbf{L}[t_0]/\sum_{i=0}^2\mathcal{D}_i\mathbf{L}[t_0]
\stackrel{\sim}{\longrightarrow}
\frac{\M\otimes \wedge^4 \mathbb{C}^4}{\D (\M\otimes\wedge^3 \mathbb{C}^4)}~.
\end{equation}
Here $\mathcal{D}_i$ are the same as those defined in \eqref{def:mathcalD}.
\\
2. The inclusion $\ring_{\Delta}\to \mathbf{L}[t_0]$ induces an isomorphism
\begin{equation}\nonumber
\mathbf{S}_{\Delta}
/\sum_{i=0}^2\mathcal{D}_i\ring_{\Delta}\stackrel{\sim}{\longrightarrow}
\mathbf{L}[t_0]/\sum_{i=0}^2\mathcal{D}_i\mathbf{L}[t_0]~.
\end{equation}
\end{lemma}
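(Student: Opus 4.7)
\textit{Plan.} The lemma has two parts; I would address them in sequence. For Part 1, the crucial first observation is that $e_i\wedge e_I=0$ whenever $i\in I$, so for each of the four basis elements $e_I$ of $\wedge^3\mathbb{C}^4$ only one summand of $\mathcal{D}'(f\otimes e_I)$ survives in $\wedge^4\mathbb{C}^4$---the one indexed by the element complementary to $I$. Hence $\mathcal{D}'(\M\otimes\wedge^3\mathbb{C}^4)=(\sum_{i=1}^{4}\mathcal{D}'_i\M)\otimes e_{1234}$, and combined with $\M=\M_0/\mathcal{D}'_0\M_0$ this reduces Part 1 to showing $\M_0/\sum_{i=0}^{4}\mathcal{D}'_i\M_0\cong\mathbf{L}[t_0]/\sum_{i=0}^{2}\mathcal{D}_i\mathbf{L}[t_0]$.

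I would then use $\mathcal{D}'_3$ and $\mathcal{D}'_4$ to eliminate $x,y$. Direct computation gives the congruence $(m_3+1)\,t_0^k t^m x^{m_3}y^{m_4}\equiv -t_0^{k+1}t^m x^{m_3+1}y^{m_4+1}\pmod{\mathcal{D}'_3\M_0}$ together with the analogue (with $m_4+1$) modulo $\mathcal{D}'_4\M_0$; subtracting shows that only monomials with $m_3=m_4$ survive. The $k=0$ relation $\mathcal{D}'_3(t^m x^{m_3+1}y^{m_4})=(m_3+1)t^m x^{m_3}y^{m_4}$ annihilates every $k=0$ monomial, and iterating the congruence reduces each remaining class to the form $t_0^{k'}t^m$ with $k'\geq 1$---exactly the image of $\phi(t_0^k t^m):=t_0^{k+1}t^m$. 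A routine verification shows $\phi$ intertwines $\mathcal{D}_i$ with $\mathcal{D}'_i$ for $i=1,2$ (since these operators preserve the $(m_3,m_4)$-grading), while the discrepancy $\mathcal{D}'_0\phi-\phi\mathcal{D}_0=t_0^{k+1}t^m+t_0^{k+2}t^m\,xy$ vanishes modulo $\mathcal{D}'_3$ via the same congruence with $m_3=0$. This establishes Part 1.

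For Part 2, the inclusion induces a well-defined map on quotients because $\mathcal{D}_i\mathbf{S}_{\Delta}\subset\mathbf{S}_{\Delta}$: convexity of $\Delta$ ensures that $m\in\Delta(k)$ and $m'\in A(\Delta)$ imply $(m+m')/(k+1)\in\Delta$. The main substance---and the principal obstacle of the lemma---is bijectivity of this map. My approach is to combine the identities $m_i t_0^k t^m=\mathcal{D}_i(t_0^k t^m)-\sum_{m'\in A(\Delta)}m'_i a_{m'}t_0^{k+1}t^{m+m'}$ (for $i=1,2$ whenever $m_i\neq 0$) with the $\mathcal{D}_0$-relations $a_0 t_0 t^m\equiv -\sum_{m'\neq 0}a_{m'}t_0 t^{m+m'}$ in a coordinated manner to express every $t_0^k t^m\in\mathbf{L}[t_0]$ as a combination of $\mathbf{S}_{\Delta}$-elements modulo $\sum_i\mathcal{D}_i\mathbf{L}[t_0]$. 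Naive iteration of a single relation does not terminate (e.g.\ for $t^{(5,0)}$ in the polyhedron \#1 case, applying only $\mathcal{D}_1$ drifts the positions further from $\mathbf{S}_{\Delta}$), so the key step is to exploit the full Koszul-type system of relations: $\Delta$-regularity guarantees that the ideal generated by $F_a,\theta_{t_1}F_a,\theta_{t_2}F_a$ has a finite-dimensional quotient (the Jacobian ring $R_{F_a}$), which I expect to translate into boundedness of the reduction procedure and thereby yield both surjectivity and injectivity.
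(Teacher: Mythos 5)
The paper offers no argument here beyond the sentence ``Proof of the lemma is by brute force calculation,'' and your reduction strategy is surely the kind of computation intended, so the choice of approach is not the issue. The issue is that the proposal does not complete the computation. In Part 1, your preliminary reduction to $\M_0/\sum_{i=0}^4\mathcal{D}'_i\M_0$ and the intertwining identities (including $\mathcal{D}'_0\phi-\phi\mathcal{D}_0=\mathcal{D}'_3(t_0^{k+1}t^m x)$) are correct, but the descent via the congruence $(m_3+1)\,t_0^k t^m x^{m_3}y^{m_4}\equiv -t_0^{k+1}t^m x^{m_3+1}y^{m_4+1}$ lowers the $t_0$-exponent in step with the $xy$-exponent and, since the $k=0$ branch of $\mathcal{D}'_3$ has no second term, it stalls at the monomials $t_0\,t^m x^i y^i$ with $i\geq 1$. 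These are not reached by the $\mathcal{D}'_3,\mathcal{D}'_4$ congruences alone; you must also use the relation already imposed in $\M=\M_0/\mathcal{D}'_0\M_0$, namely $t_0 t^m x^{i}y^{i}\equiv -t^m x^{i-1}y^{i-1}-t_0 F t^m x^{i-1}y^{i-1}$, and iterate it down to $t_0 F^i t^m\in t_0\mathbf{L}[t_0]$. That is fixable, but a more serious omission is that everything you establish for Part 1 amounts to well-definedness plus surjectivity of the induced map; injectivity is never addressed. To obtain it you would have to promote the reduction to a map defined on $\M_0$ itself, check that it annihilates each $\mathcal{D}'_j\M_0$ and is independent of the order in which relations are applied (so that it descends to an inverse), or else supply an independent dimension count for $H^4(\T^2\times\mathbb{C}^2\setminus Z^{\circ}_a)$ --- which is not available at this point of the appendix without circularity.

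For Part 2 the well-definedness argument via convexity of $\Delta$ is correct, but the bijectivity --- which you yourself identify as ``the main substance'' --- is left as an expectation. Finite-dimensionality of the Jacobian ring $R_{F_a}$ does not by itself bound a rewriting procedure whose individual steps (the identities $m_i t_0^k t^m\equiv-\sum_{m'}m'_i a_{m'}t_0^{k+1}t^{m+m'}$) push exponents further from the cone before bringing them back, and it says nothing about injectivity of $\ring_{\Delta}/\sum_i\mathcal{D}_i\ring_{\Delta}\to\mathbf{L}[t_0]/\sum_i\mathcal{D}_i\mathbf{L}[t_0]$. A workable completion is to filter $\mathbf{L}[t_0]$ by the $t_0$-degree together with a measure of how far $m$ lies outside $k\Delta$, prove termination of the reduction with respect to this filtration using $\Delta$-regularity (which is where regularity genuinely enters and which your sketch never invokes concretely), and extract injectivity from the resulting normal form; alternatively, identify both quotients with the same twisted de Rham cohomology and quote Batyrev's comparison results. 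As written, the proposal is a sound plan whose decisive steps --- injectivity in Part 1 and all of the bijectivity in Part 2 --- remain unexecuted.
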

Proof of the lemma is by brute force calculation.
\end{proof}
%%%%%%%%%%%%%%%%%%%%%%%%%%%%%%%%%%%%%%%%%%%%%%%
%%%%%%%%%%%%%%%%%%%%%%%%%%%%%%%%%%%%%%%%%%%%%%%%%%%%%%%%%%%%%%%%%%%%%%%%%%%%%%%%%%%%%%%%%%%%%%
\subsection{Compactification $Z_a$ of $Z^{\circ}_a$}\label{sec:compact}
%%%%%%%%%%%%%%%%%%%%%%%%%%%%%%%%%%%%%%%%%%%%%%%%%%% 
\begin{figure}[t]
\unitlength .07cm
\begin{center}
\begin{picture}(35,35)(-15,-15)
\thinlines
\put(-15,0){\line(1,0){35}}
\put(0,-15){\line(0,1){35}}
\thicklines
\put(10,0){\line(-1,1){10}}
\put(10,0){\circle*{2}}
\put(11,1){(1,0)}
\put(10,0){\line(-2,-1){20}}
\put(0,10){\circle*{2}}
\put(1,11){(0,1)}
\put(0,10){\line(-1,-2){10}}
\put(-13,-14){(-1,-1)}
\put(-10,-10){\circle*{2}}
\put(0,0){\circle*{2}}
\end{picture}
\hspace*{1cm}
\begin{picture}(40,40)(-15,-15)
\thinlines
\put(-15,0){\line(1,0){40}}
\put(0,-15){\line(0,1){40}}
\thicklines
\put(20,-10){\line(-1,1){30}}
\put(20,-10){\circle*{2}}
\put(21,-13){(2,-1)}
\put(-10,20){\line(0,-1){30}}
\put(-10,20){\circle*{2}}
\put(-15,21){(-1,2)}
\put(-10,-10){\line(1,0){30}}
\put(-13,-14){(-1,-1)}
\put(-10,-10){\circle*{2}}
\put(0,0){\circle*{2}}
\put(10,0){\circle*{2}}
\put(0,10){\circle*{2}}
\put(-10,10){\circle*{2}}
\put(-10,0){\circle*{2}}
\put(0,-10){\circle*{2}}
\put(10,-10){\circle*{2}}
\end{picture}
\hspace*{1cm}
\begin{picture}(40,40)(-15,-15)
\thinlines
\put(-15,0){\line(1,0){40}}
\put(0,-15){\line(0,1){40}}
\put(20,-10){\line(-1,1){30}}
\put(-10,20){\line(0,-1){30}}
\put(-10,-10){\line(1,0){30}}

\thicklines
\put(0,0){\line(1,0){23}}
\put(0,0){\line(0,1){23}}
\put(0,0){\line(-1,2){15}}
\put(0,0){\line(-1,1){15}}
\put(0,0){\line(-1,0){15}}
\put(0,0){\line(-1,-1){15}}
\put(0,0){\line(0,-1){15}}
\put(0,0){\line(1,-1){15}}
\put(0,0){\line(2,-1){30}}
\end{picture}
\end{center}
\caption{An example of a reflexive polyhedron $\Delta$ (left),
its dual polytope $\Delta^*$ (middle) and the fan $\Sigma(\Delta)$ 
(right).
}\label{fig:dualpolytope}
\end{figure}
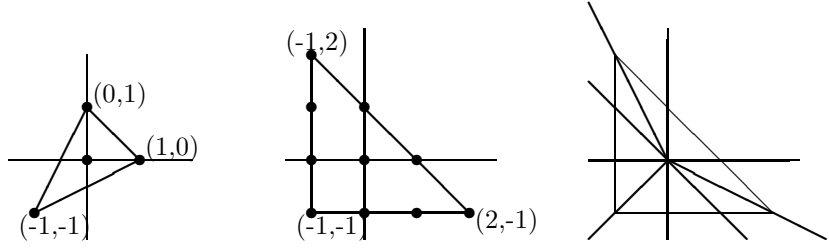
%%%%%%%%%%%%%%%%%%%%%%%%%%%%%%%%%%%%%%%%%%%%%%%%
In \S \ref{sec:compact}--\S \ref{sec:deformation},
we omit the subscript $a$ from $F_a$, $P_a$, $Z_a$, $Z^{\circ}_a$, 
$C_a^{\circ}$ 
and $C_a$ for simplicity.
In \S \ref{sec:compact} and \S \ref{sec:hodge},
$t^m$ stands for the Laurent monomial
$t_1^{m_1}t_2^{m_2}t_3^{m_3}t_4^{m_4}$.

We construct a compactification of $Z^{\circ}$
as a semiample smooth hypersurface $Z$ in a $4$-dimensional
toric variety $\V$ such that the divisor 
$D=Z\setminus Z^{\circ}$ is a simple normal crossing divisor: 
\begin{equation}\nonumber
\begin{array}{ccccc}
Z^{\circ} &&\subset&& Z
\\
\cap &&&&\cap\\
\T^2\times\mathbb{C}^2&&\subset&&\V
\end{array}~~.
\end{equation}

The basic idea is to consider the following slightly modified expression for
$P=F+xy$:
\begin{equation}\label{def:tildeP}
\tilde{P}:=\frac{F(t_1,t_2)}{t_3t_4}+\frac{b_1}{t_4}+b_0~,
\quad(b_1,b_0\neq 0, F\in \mathbb{L}_{{\rm reg}}(\Delta))~.
\end{equation}
The Newton polyhedron $\tilde{\Delta}$ of $\tilde{P}$ is 
given by
\begin{equation}\label{def:tildeDelta}
\begin{split}
\tilde{\Delta}:=\big\{(m_1,m_2,&m_3,m_4)\in \mathbb{R}^4
\mid
\\&m_3\leq 0,~m_4\geq -1,~m_3-m_4\geq 0,
(m_1,m_2)\in \Delta(-m_3)\big\}~.
\end{split}
\end{equation}
Then by the general theory of the toric variety,
we obtain a singular projective 
toric variety 
$\V'=\mathrm{Proj}\,\mathbf{S}_{\tilde{\Delta}}$ such that
$H^0(\V',\mathcal{O}(1))\cong \oplus_{m\in A(\tilde{\Delta})}\mathbb{C}t^m$.
We blow up $\V'$ to obtain a smooth toric variety $\V$.
A compactification of $Z^{\circ}$
can  be obtained as a hypersurface defined by
a generic section of the pull-back of $\mathcal{O}(1)$.

Such a $\V$ can be explicitly given as follows.
First, let $v_i\in \mathbb{Z}^2$ ($1\leq i\leq r$, $r:=l(\Delta)-1$) 
be the primitive vectors
lying on faces of the dual polyhedron $\Delta^*$ of $\Delta$.
Let $\Sigma(\Delta)$ 
be the $2$-dimensional complete fan
spanned by $v_1,\ldots, v_r$ (see Figure \ref{fig:dualpolytope})
and  let  $\mathbb{P}_{\Sigma(\Delta)}$ be the corresponding smooth toric surface. 
Then $\mathbb{P}_{\Sigma(\Delta)}$ 
is a resolution of the singular toric surface
$\mathrm{Proj}\, {\mathbf{S}_{\Delta}}$,
and $C^{\circ}$ can be compactified smoothly to $C$
in $\mathbb{P}_{\Sigma(\Delta)}$.
Next we set
\begin{equation}\nonumber
\begin{split}
{\tilde{ v}}_i=\begin{pmatrix}v_i\\-1\\0\end{pmatrix}~~(1\leq i\leq r),
\quad
{u}_1=\begin{pmatrix}\vec{0}\\1\\0\end{pmatrix}~,\quad
{u}_2=\begin{pmatrix}\vec{0}\\0\\1\end{pmatrix}~,\quad
{u}_3=\begin{pmatrix}\vec{0}\\-1\\0\end{pmatrix}~,\quad
{u}_4=\begin{pmatrix}\vec{0}\\1\\-1\end{pmatrix}~.
\end{split}
\end{equation}
Then the $1$-cones of the fan $\Sigma_{\V}$ are given by
$$ 
\nu_i=\mathbb{R}_{\geq 0}{\tilde{ v}}_i ~~(1\leq i\leq r)~,\quad
\mu_i=\mathbb{R}_{\geq 0}{u}_j~~ (1\leq j\leq 4)~,
$$
and the $4$-cones of $\Sigma_{\V}$ are given by
\begin{equation}\nonumber
[i,i+1;j,j+1]:=\mathbb{R}_{\geq 0}{\tilde{ v}}_i+
\mathbb{R}_{\geq 0}{\tilde{v}}_{i+1}+
\mathbb{R}_{\geq 0}{u}_j+
\mathbb{R}_{\geq 0}{u}_{j+1}
\quad
(1\leq i\leq r,~1\leq j\leq 4).
\end{equation}
(For the sake of convenience, 
we set ${\tilde{v}}_{r+1}:={\tilde{v}}_1, { u}_5:={u}_1$
and $\nu_{r+1}:=\nu_1, \mu_5:=\mu_1$.)
The $3$-cones and the $2$-cones are faces of the above $4$-cones.
The toric variety $\V$ associated to the fan $\Sigma_{\mathbf{V}}$
is a
bundle over the toric surface $\mathbb{P}_{\Sigma(\Delta)}$ whose fiber is 
the Hirzebruch surface $\mathbb{F}_1$.

Let $\mathbb{D}_i$ ($1\leq i\leq r$) and $\mathbb{E}_j$ ($1\leq j\leq 4$)
be the toric divisors of $\V$ corresponding to
the $1$-cones $\nu_i$ and $\mu_j$ respectively.
By a standard computation in the theory of toric varieties (see e.g. \cite{Oda}), 
we have

\begin{lemma} 
$$
H^0(\V,\mathcal{O}(\mathbb{E}_1+\mathbb{E}_2))=
\bigoplus_{m\in A(\tilde{\Delta})} \mathbb{C}\,t^m~.
$$
\end{lemma}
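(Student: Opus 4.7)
The plan is to invoke the standard combinatorial description (see, e.g., \cite{Oda}) of global sections of a torus-invariant line bundle on a smooth complete toric variety: for $D = \sum_\rho a_\rho D_\rho$ running over the rays $\rho$ of $\Sigma_{\V}$,
$$
H^0(\V, \mathcal{O}(D)) = \bigoplus_{m \in M \cap P_D} \mathbb{C}\, t^m,
\qquad
P_D := \{m \in M_{\mathbb{R}} \mid \langle m, u_\rho\rangle \geq -a_\rho~\text{for every ray}~\rho\}.
$$
For $D = \mathbb{E}_1 + \mathbb{E}_2$ we have $a_{\mu_1} = a_{\mu_2} = 1$ and all other coefficients zero, so it suffices to show that $P_D = \tilde{\Delta}$ and then pass to integer points.

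With $m = (m_1, m_2, m_3, m_4)$ and the explicit generators $\tilde{v}_i = (v_i, -1, 0)$ and $u_1, \ldots, u_4$, the defining inequalities of $P_D$ become $\langle (m_1, m_2), v_i\rangle \geq m_3$ for $1 \leq i \leq r$, together with $m_3 \geq -1$ (from $\mu_1$), $m_4 \geq -1$ (from $\mu_2$), $m_3 \leq 0$ (from $\mu_3$) and $m_3 \geq m_4$ (from $\mu_4$). The last four match the bounds appearing in \eqref{def:tildeDelta} except for the additional condition $m_3 \geq -1$, which is redundant since it follows from $m_3 \geq m_4 \geq -1$. For the remaining $r$ inequalities: when $m_3 = 0$ they force $(m_1, m_2) = 0$ because the $v_i$ span $\mathbb{R}^2$ as the rays of a complete fan, in agreement with $\Delta(0) = \{0\}$; when $m_3 < 0$, dividing by $-m_3$ rewrites them as $\langle (m_1, m_2)/(-m_3), v_i\rangle \geq -1$, and since the $v_i$ include in particular all vertices of $\Delta^*$, reflexivity of $\Delta$---which gives $\Delta = \{x \mid \langle x, v\rangle \geq -1~\text{for all}~v \in \Delta^*\}$---shows this is equivalent to $(m_1, m_2)/(-m_3) \in \Delta$, i.e.\ $(m_1, m_2) \in \Delta(-m_3)$. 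Hence $P_D = \tilde{\Delta}$, and the lemma follows.

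The only mildly nontrivial points are the redundancy check for $m_3 \geq -1$---a reflection of the fact that $\mathbb{E}_1$ is an exceptional divisor of the resolution $\V \to \V'$, so the inequality it contributes is already implied by those coming from the normal fan of $\tilde{\Delta}$---and the appeal to reflexivity of $\Delta$ to identify the collection of $v_i$-inequalities with membership in the dilate $\Delta(-m_3)$. Everything else is a routine toric computation.
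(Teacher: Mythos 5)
Your proof is correct and is precisely the ``standard computation in the theory of toric varieties'' that the paper invokes (citing Oda) without writing out any details: one identifies the section polytope of the divisor $\mathbb{E}_1+\mathbb{E}_2$ with $\tilde{\Delta}$ and takes lattice points. The two points you flag --- the redundancy of $m_3\geq -1$ (implied by $m_3\geq m_4\geq -1$) and the use of reflexivity, via $\Delta=(\Delta^*)^*$ and the fact that the $v_i$ contain the vertices of $\Delta^*$, to convert the $r$ inequalities $\langle (m_1,m_2),v_i\rangle\geq m_3$ into $(m_1,m_2)\in\Delta(-m_3)$ --- are exactly the places where something needs checking, and you handle both correctly.
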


Therefore $\tilde{P}$ in eq.\eqref{def:tildeP} is a generic section of 
the line bundle corresponding to the divisor $\mathbb{E}_1+\mathbb{E}_2$.
We define $Z$ to be the hypersurface in $\V$ defined by $\tilde{P}$.
We show that if we assume $F$ is $\Delta$-regular and $b_0b_1\neq 0$, 
then 
(1) $Z^{\circ}\subset Z$; (2) $Z$ is smooth;
(3) $D=Z/Z^{\circ}$ is a (simple) normal crossing divisor.
(1) can be shown as follows.
Let $U_{\mu_1,\mu_2}=\mathrm{Spec}\,[t_1^{\pm 1},t_2^{\pm 2},t_3,t_4]\subset \V$ be the open set
corresponding to the $2$-cone spanned by $\mu_1,\mu_2$.
It is isomorphic to $\T^2\times \mathbb{C}^2$.
The defining equation of $Z$ on $U_{\mu_1,\mu_2}$ is 
$
F(t_1,t_2)+b_1 t_3+b_0t_3t_4
$
and this is equal to $P$ if
we identify $x=t_3, y=b_1+b_0t_4$. 
We can prove (2) and (3) by looking at 
the defining equation $P_{\sigma}$ of $Z$ 
on the open subset $U_{\sigma}\subset
\V$ corresponding to each 4-cone $\sigma$.

We end this subsection by listing the Hodge numbers of 
$Z$. 
\begin{lemma}\label{lem:hodgenumber}
The Hodge numbers $h^{p,q}(Z)=\dim H^{p,q}(Z)$ 
are 
\begin{equation}\nonumber
\begin{array}{r|rccc}
   &p=0&1&2&3\\\hline
q=0&1&0&0&0\\
1& 0&l(\Delta)-1&1&0\\
2&  0&1&l(\Delta)-1&0\\ 
3&  0&0&0&1
\end{array}~~~~.
\end{equation}
\end{lemma}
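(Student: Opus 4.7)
\bigskip

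\noindent\textbf{Proof proposal.} The plan is to apply Mavlyutov's formulas \cite{Mavlyutov} for the Hodge numbers of a semiample regular hypersurface in a smooth complete toric variety to the compactification $Z\subset \mathbf{V}$ constructed in \S A.2, and to organize the resulting data into the stated table. First I would verify the hypotheses: $\mathbf{V}$ is smooth and complete (the fan $\Sigma_{\mathbf{V}}$ was built from primitive vectors and each maximal cone is spanned by a part of a basis of $\mathbb{Z}^4$), the divisor class $\mathbb{E}_1+\mathbb{E}_2$ is semiample because $H^0(\mathbf{V},\mathcal{O}(\mathbb{E}_1+\mathbb{E}_2))$ is spanned by the monomials indexed by $A(\tilde{\Delta})$, and $\tilde P$ is a $\tilde{\Delta}$-regular global section whenever $F_a$ is $\Delta$-regular and $b_0 b_1\neq 0$ (already checked in \S A.2).

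Mavlyutov's theorem then gives a canonical decomposition $H^k(Z)=H^k_{\mathrm{tor}}(Z)\oplus H^k_{\mathrm{res}}(Z)$ into a toric part, equal to the image of $H^k(\mathbf{V})\to H^k(Z)$, and a residue part described combinatorially in terms of the Jacobian-like ring of $\tilde P$ on $\tilde{\Delta}$. For the toric part, since $\mathbf{V}$ is a smooth complete toric fourfold, $H^{\bullet}(\mathbf{V})$ is concentrated in Hodge types $(p,p)$, and counting rays of $\Sigma_{\mathbf{V}}$ (there are $r+4=(l(\Delta)-1)+4$) gives $h^{1,1}(\mathbf{V})=l(\Delta)-1$. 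The semiample Lefschetz-type theorem of Mavlyutov yields $h^{p,0}_{\mathrm{tor}}(Z)=h^{p,0}(\mathbf{V})=0$ for $p\ge 1$, injectivity of $H^{1,1}(\mathbf{V})\to H^{1,1}(Z)$, and, in the complementary degrees, the corresponding equalities by Poincar\'e duality on $Z$. Hence $h^{1,1}_{\mathrm{tor}}(Z)=h^{2,2}_{\mathrm{tor}}(Z)=l(\Delta)-1$.

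For the residue part, Mavlyutov expresses $h^{p,q}_{\mathrm{res}}(Z)$ ($p+q=3$) in terms of interior lattice points of faces of $\tilde{\Delta}$ weighted by pole-order shifts. A direct inspection of the polytope $\tilde{\Delta}$ in \eqref{def:tildeDelta} shows that it has no interior lattice points of the dimension required for a $(3,0)$ or $(0,3)$ contribution, while the single interior lattice point of the distinguished codimension-one slice through $m_3=m_4=0$ (corresponding to the origin $0\in \Delta$) produces a one-dimensional contribution in types $(2,1)$ and $(1,2)$. Similarly no new residue classes appear in types $(1,1)$ or $(2,2)$, because the relevant face contributions on $\tilde{\Delta}$ vanish by the shape of the polytope. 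This yields $h^{3,0}_{\mathrm{res}}=h^{0,3}_{\mathrm{res}}=0$, $h^{2,1}_{\mathrm{res}}=h^{1,2}_{\mathrm{res}}=1$, and zero residue contribution to $(1,1)$ and $(2,2)$. Assembling the two parts, using $h^{0,0}(Z)=h^{3,3}(Z)=1$ and Serre duality for the vanishing of $h^{3,1},h^{3,2}$ and their conjugates, reproduces the stated table.

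The main obstacle will be the combinatorial analysis of $\tilde{\Delta}$. Unlike the situation in \cite{Batyrev} where the ambient variety is $\mathbb{T}^n$ and Batyrev's bijections between Hodge pieces and graded quotients of $\mathcal{R}_{F_a}$ are immediate, here $\tilde{\Delta}$ is a four-dimensional polytope obtained as a non-standard truncated cone over $\Delta$, so I must enumerate its faces carefully (separating the ``horizontal'' boundary faces coming from $\Delta$ and the ``vertical'' ones coming from the constraints $m_3\le 0$, $m_4\ge -1$, $m_3-m_4\ge 0$), identify the interior lattice points of each, and check that only one of them contributes non-trivially in middle degree. A useful consistency check at the end is to compute $\chi(Z)$ independently from the fan $\Sigma_{\mathbf{V}}$ and the normal bundle of $Z$ in $\mathbf{V}$ and match it against $\sum_{p,q}(-1)^{p+q}h^{p,q}(Z)$ derived from the table.
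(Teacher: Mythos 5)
Your overall strategy---reduce everything to Mavlyutov plus lattice-point bookkeeping on $\tilde{\Delta}$---is reasonable, but it is not the mechanism the paper uses, and the one place where you commit to a concrete combinatorial claim is wrong as stated. The paper applies Mavlyutov's Corollary 2.7 (the face-sum formula for $\dim H^q(\mathbf{V},\Omega^p_{\mathbf{V}}(kZ))$, reproduced in the footnote to its proof) to the twists $k=1,2$ on the ambient fourfold $\mathbf{V}$, and then descends to $\dim H^q(Z,\Omega^p_Z)$ via the exact sequences from the proof of the Lefschetz hyperplane theorem. You instead invoke the toric-plus-residue decomposition of $H^{\bullet}(Z)$ itself, which comes from Mavlyutov's separate work on semiample hypersurfaces rather than from the cited reference. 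Both routes end at the same lattice data, and your toric part is fine: $\mathbf{V}$ has $r+4=l(\Delta)+3$ rays, so $h^{1,1}(\mathbf{V})=l(\Delta)-1$, and since $Z$ is big and nef the Lefschetz-type statement gives $h^{1,1}(Z)=h^{2,2}(Z)=l(\Delta)-1$ and $h^{2,0}=h^{3,1}=0$, reducing everything to $H^3(Z)$.

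The gap is in the residue part, which is where all the content of the lemma sits. The locus $\{m_3=m_4=0\}$ in $\tilde{\Delta}$ is not a codimension-one slice with an interior lattice point: since $\Delta(0)=\{0\}$, the face $\{m_3=0\}\cap\tilde{\Delta}$ is the segment from $(0,0,0,0)$ to $(0,0,0,-1)$, and $\{m_3=m_4=0\}\cap\tilde{\Delta}$ is the single vertex $(0,0,0,0)$. The constraints force $-1\le m_4\le m_3\le 0$, so no facet of $\tilde{\Delta}$ contains a relative-interior lattice point (that would require a non-integer value of $m_3$ or $m_4$ strictly between $-1$ and $0$); the only positive-dimensional face carrying interior lattice points is the $2$-face $\Delta\times\{(-1,-1)\}$, whose unique interior point is the origin of $\Delta$ (plus, for polygons such as $\#4$, interior points of its edges). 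Moreover, because $Z$ is only semiample, the naive ample-case recipes do not apply verbatim: for instance $2\tilde{\Delta}$ has no interior lattice points at all, yet $h^{2,1}(Z)=1$, so the $(2,1)$ class cannot be located by inspecting interior points of dilates---it only emerges from the full face-sum formula with its $\#\Sigma_{\sigma_{\delta}}(j)$ correction terms (or, in the paper's route, from the exact-sequence descent). Until that bookkeeping is actually carried out, the values $h^{2,1}=h^{1,2}=1$ and the vanishing of the residue contributions in types $(1,1)$ and $(2,2)$ are assertions rather than consequences of what you have written. Your Euler-characteristic cross-check is a good idea, though it would only test the alternating sum, not the distribution among the individual $h^{p,q}$.
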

\begin{proof} 
By the formula on cohomology of semiample divisors on 
a toric variety due to Mavlyutov \cite[Cor.2.7]{Mavlyutov}%
\footnote{
For a semiample toric divisor $X$ in a 
$d$-dimensional complete simplicial toric variety $\mathbb{P}_{\Sigma}$,
Mavlyutov's formula is:
$$\dim H^k(\mathbb{P}_{\Sigma},\Omega_{\mathbb{P}_{\Sigma}}^l(X))
=\sum_{\delta}l^*(\delta)
\begin{pmatrix}\dim \delta\\l-k \end{pmatrix}
\cdot
\sum_{j=0}^k\begin{pmatrix}
d-\dim \delta -j\\k-j
\end{pmatrix}
(-1)^{k-j}
\#\Sigma_{\sigma_{\delta}}(j)~.
$$
The sum is over all faces $\delta$ of the polytope $\Delta_X$
associated to $X$, 
$l^*(\delta)$ is the number of interior integral points in the face $\delta$,
$\sigma_{\delta}\in \Sigma_X$
is a cone corresponding to the face $\delta$ in
the fan $\Sigma_X$
(which
is the fan such that 
there is a morphism $s:\Sigma\to \Sigma_X$;
$X$ is a pull-back of an ample divisor
by the induced morphism of toric varieties 
$\mathbb{P}_{\Sigma}\to \mathbb{P}_{\Sigma_X}$),
and
$\#\Sigma_{\sigma_{\delta}}(j)$ is the number of $j$-cones
in $\Sigma_{\sigma_{\delta}}=\{s(\tau)\in \Sigma: \tau \in \sigma_{\delta}\}$.
In the case of the threefold $Z\subset \V$,
$\Sigma_{Z}$ is generated by $u_2,u_4$ and 1-cones $\tilde{v}_i$ 
such that $v_i$ are vertices of $\Delta^*$.},
we can explicitly 
compute the dimensions of $H^q(\V,\Omega^p(\mathbb{E}_1+\mathbb{E}_2))$
and $H^q(\V,\Omega^p(2\mathbb{E}_1+2\mathbb{E}_2))$.
Then we obtain $\dim H^q(Z, \Omega^p)$ by 
exact sequences (as in the proof of the Lefshetz hyperplane theorem \cite[p.156]{GriffithsHarris}).
\end{proof}
%%%%%%%%%%%%%%%%%%%%%%%%%%%%%%%%%%%%%%%%%%%%%%%%%%%%%%%%%%%%%%%%%%%%%%%%%%%%%%%%%%%%%%%%%%%%%%
\subsection{The Hodge filtration}\label{sec:hodge}
%%%%%%%%%%%%%%%%%%%%%%%%%%%%%%%%%%
Let $\mathbb{D}=\sum_{i=1}^r \mathbb{D}_i+\mathbb{E}_3+\mathbb{E}_4$.
Note that $\mathbb{D}=\mathbf{V}\setminus \T^2\times \mathbb{C}^2$
and $D=Z/Z^{\circ}=Z\cap \mathbb{D}$.
\begin{proposition}\label{prop:hodge1}
For $p=1,2,3,4$, the residue mapping
\begin{equation}\nonumber
H^{4-p}\big(\V,\Omega_{\V}^p(\log(Z+\divisor))\big)
 \stackrel{\res_{Z}}{\rightarrow}
H^{4-p}\big(Z, \Omega_{Z}^{p-1}(\log D)\big)
\end{equation}
is an isomorphism.
\end{proposition}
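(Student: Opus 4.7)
My plan is to derive the isomorphism from the standard Poincar\'e residue exact sequence combined with a vanishing statement for cohomology on $\V$ with values in logarithmic forms along $\divisor$.

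Since $Z+\divisor$ is a simple normal crossing divisor (verified in \S \ref{sec:compact}), one has the short exact sequence of locally free sheaves on $\V$
\begin{equation*}
0 \longrightarrow \Omega_{\V}^{p}(\log \divisor) \longrightarrow \Omega_{\V}^{p}\bigl(\log(Z+\divisor)\bigr) \xrightarrow{\ \res_{Z}\ } \iota_{Z*}\,\Omega_{Z}^{p-1}(\log D) \longrightarrow 0 .
\end{equation*}
The associated long exact sequence of cohomology reduces the proposition to proving
\begin{equation*}
H^{4-p}\bigl(\V,\,\Omega^p_{\V}(\log \divisor)\bigr) \;=\; H^{5-p}\bigl(\V,\,\Omega^p_{\V}(\log \divisor)\bigr) \;=\; 0 \qquad (p=1,2,3,4).
\end{equation*}

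For this vanishing I would compare $\divisor$ with the full toric boundary $\divisor_{\mathrm{all}}:=\divisor+\mathbb{E}_1+\mathbb{E}_2$ of the smooth projective toric fourfold $\V$. The sheaf $\Omega^p_{\V}(\log \divisor_{\mathrm{all}})$ is the $p$-th exterior power of the trivial bundle of rank $4$, so the Bott--Danilov vanishing theorem gives $H^q(\V,\Omega^p_{\V}(\log \divisor_{\mathrm{all}}))=0$ for all $q\geq 1$. The passage from $\divisor_{\mathrm{all}}$ back to $\divisor$ is governed by an exact Koszul-type residue complex
\begin{equation*}
0 \to \Omega^p_{\V}(\log \divisor) \to \Omega^p_{\V}(\log \divisor_{\mathrm{all}}) \to \bigoplus_{j=1,2} \iota_{\mathbb{E}_j *}\Omega^{p-1}_{\mathbb{E}_j}(\log \divisor|_{\mathbb{E}_j}) \to \iota_{\mathbb{E}_1\cap \mathbb{E}_2 *}\Omega^{p-2}_{\mathbb{E}_1\cap \mathbb{E}_2}(\log \divisor|_{\mathbb{E}_1\cap \mathbb{E}_2}) \to 0,
\end{equation*}
whose exactness can be verified \'etale-locally. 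The associated hypercohomology spectral sequence expresses $H^{\ast}(\V,\Omega^p_{\V}(\log \divisor))$ in terms of logarithmic cohomology on the smooth projective toric threefolds $\mathbb{E}_1,\mathbb{E}_2$ and on the toric surface $\mathbb{E}_1\cap \mathbb{E}_2$; on each of these strata, the restriction of $\divisor$ is again a part of the intrinsic toric boundary, so the same principle applied inductively together with Bott--Danilov yields the required vanishing.

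The main obstacle I anticipate is the bookkeeping for this spectral sequence---one must check stratum by stratum that no $E_\infty$-term survives in total degrees $4-p$ or $5-p$ for the values $p=1,2,3,4$, which is sensitive to the partial toric boundary restricted to the fiber directions $\mathbb{E}_1,\mathbb{E}_2$. An alternative, perhaps cleaner route is to invoke Mavlyutov's formula \cite{Mavlyutov} (already used in the proof of Lemma \ref{lem:hodgenumber}) to compute each $H^q(\V,\Omega^p_{\V}(\log \divisor))$ combinatorially from the fan $\Sigma_{\V}$ and the polyhedron $\tilde\Delta$, thereby bypassing the spectral sequence altogether. In either case, the statement then follows immediately from the first step.
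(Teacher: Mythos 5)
Your proposal is correct and follows essentially the same route as the paper: the residue short exact sequence $0\to\Omega^p_{\V}(\log\divisor)\to\Omega^p_{\V}(\log(Z+\divisor))\to\Omega^{p-1}_{Z}(\log D)\to 0$ reduces everything to the vanishing of $H^{4-p}$ and $H^{5-p}$ of $\Omega^p_{\V}(\log\divisor)$, which the paper establishes (Lemma \ref{prop:cohom3} with $k=0$) exactly by your descent from the full toric boundary $\divisor+\mathbb{E}_1+\mathbb{E}_2$ (where the log sheaf is trivial) through $\mathbb{E}_1$, $\mathbb{E}_2$ and $\mathbb{E}_1\cap\mathbb{E}_2$ via a Koszul-type residue resolution. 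The only caveat is that the ``bookkeeping'' you defer is genuinely needed in degree $q=1$, where one must verify surjectivity of the restriction maps on $H^0$ between strata, as the paper does explicitly.
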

\begin{proof}
Consider the exact sequence
\begin{equation}\nonumber
0\rightarrow \Omega_{\V}^p(\log\divisor)
\rightarrow\Omega_{\V}^p(\log(Z+\divisor))
\stackrel{\res_{Z}}{\rightarrow}
\Omega^{p-1}_{Z}(\log D)\rightarrow 0~,
\end{equation}
and take the cohomology.
The vanishings $H^{4-p}(\V,\Omega^p_{\V}(\log\divisor))=H^{5-p}(\V,\Omega^p_{\V}(\log\divisor))=0$ imply the proposition.
\end{proof}

We use the notation
$\Omega^p_{\V,\divisor}(k):=\Omega_{\V}^p(\log\divisor)
 \otimes \mathcal{O}(kZ) $
for integers $k,p\geq 0$.
\begin{proposition}\label{prop:hodge2}
\begin{equation}\nonumber
\begin{split}
H^{4-p}\big(\V,\Omega^p(\log(Z+\divisor))\big)&\cong
\frac{H^0(\V,\Omega^4_{\V,\divisor}(5-p))}
{H^0(\V,\Omega^4_{\V,\divisor}(4-p))+
dH^0(\V,\Omega^{3}_{\V,\divisor}(4-p))
}~ \quad(p=1,2,3),
\\
H^0\big(\V,\Omega^4(\log(Z+\divisor))\big) &\cong
H^0(\V,\Omega^4_{\V,\divisor}(1))~.
\end{split}
\end{equation}
\end{proposition}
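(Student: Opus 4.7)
The plan is to adapt Griffiths' classical pole-order reduction, in the form used by Batyrev \cite[\S 8]{Batyrev}, to our logarithmic setting: $\V$ is a smooth $4$-dimensional projective toric variety, $Z$ a smooth semiample hypersurface, and $\divisor$ a normal crossing divisor meeting $Z$ transversally.

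The case $p=4$ is immediate. On a smooth $4$-fold, a top-degree form with logarithmic pole along $Z+\divisor$ has at most a simple pole along $Z$, so
$$\Omega^4_{\V}(\log(Z+\divisor)) = \Omega^4_{\V}(\log\divisor)(Z) = \Omega^4_{\V,\divisor}(1),$$
which gives the stated identification after taking global sections.

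For $1 \leq p \leq 3$, I would set up the pole-order filtration of the meromorphic sheaf $\Omega^p_{\V}(\log\divisor)(\ast Z)$. Transversality of $Z$ and $\divisor$ gives, for each $k\geq 1$, the residue exact sequence
$$0 \to \Omega^p_{\V,\divisor}((k-1)Z) \to \Omega^p_{\V,\divisor}(kZ) \xrightarrow{\res_Z} \Omega^{p-1}_Z(\log D)\otimes \mathcal{O}_Z((k-1)Z|_Z) \to 0.$$
Together with the inclusion $\Omega^p_{\V}(\log(Z+\divisor))\subset \Omega^p_{\V,\divisor}(Z)$ and the exterior derivative $d\colon \Omega^{p-1}_{\V,\divisor}(kZ)\to \Omega^p_{\V,\divisor}(kZ)$, these fit into a double complex whose total cohomology computes $H^{4-p}(\V,\Omega^p_{\V}(\log(Z+\divisor)))$. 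The crucial input is the vanishing $H^q(\V,\Omega^p_{\V,\divisor}(kZ))=0$ for $q>0$ in the relevant range of $(p,k)$; this I would deduce from Mavlyutov's vanishing theorem for semiample divisors on smooth toric varieties \cite{Mavlyutov}, the same tool used in Lemma \ref{lem:hodgenumber}. Granted the vanishing, the long exact sequences collapse and everything is read off from global sections. A Koszul-type identification $\Omega^p_{\V,\divisor}(kZ)\hookrightarrow \Omega^4_{\V,\divisor}((k+4-p)Z)$, obtained by wedging with logarithmic differentials coming from the toric boundary $\divisor$, then translates the answer into top-degree forms. Bookkeeping pole orders through $4-p$ residue steps lands us in $H^0(\V,\Omega^4_{\V,\divisor}(5-p))$, with the first summand of the denominator coming from forms already expressible with pole order $4-p$ (which vanish under the Poincar\'e residue) and the second from the image of $d$ on $H^0(\V,\Omega^3_{\V,\divisor}(4-p))$.

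The main obstacle is the vanishing step: because $Z\sim \mathbb{E}_1+\mathbb{E}_2$ is only semiample and not ample, standard Bott vanishing does not apply, and Mavlyutov's hypotheses must be verified for every pair $(p,k)$ occurring in the induction, not merely for large $k$. A secondary delicate point is checking that the Koszul identification with top-degree forms is compatible with the boundary maps of the pole-order spectral sequence, so that the image of $d$ in the final quotient corresponds exactly to $dH^0(\V,\Omega^3_{\V,\divisor}(4-p))$ rather than to a twisted variant.
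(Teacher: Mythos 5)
Your overall strategy coincides with the paper's: a Griffiths--Batyrev pole-order reduction for logarithmic forms, with acyclicity of the sheaves $\Omega^j_{\V,\divisor}(k)$ (deduced from Mavlyutov's vanishing for the semiample divisor $Z\sim\mathbb{E}_1+\mathbb{E}_2$ --- this is exactly Lemma \ref{prop:cohom3}) allowing the hypercohomology to be read off from global sections. Your treatment of $p=4$ and your identification of the two summands in the denominator are also correct. However, the displayed ``residue exact sequence'' on which you base the induction is false as stated: the quotient $\Omega^p_{\V,\divisor}(k)/\Omega^p_{\V,\divisor}(k-1)$ is $\Omega^p_{\V}(\log\divisor)\otimes\mathcal{O}(kZ)\big|_Z$, a bundle of rank $\binom{4}{p}$ on $Z$, whereas $\Omega^{p-1}_Z(\log D)\otimes\mathcal{O}_Z((k-1)Z)$ has rank $\binom{3}{p-1}$; moreover there is no residue map out of $\Omega^p_{\V,\divisor}(k)$, since its sections have poles of order $k$ along $Z$ rather than logarithmic ones. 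The correct object --- and what the paper uses --- is the single resolution
\begin{equation}\nonumber
0\to \Omega_{\V}^p(\log(Z+\divisor))
 \hookrightarrow \Omega^p_{\V,\divisor}(1)
\stackrel{d}{\to} \frac{\Omega^{p+1}_{\V,\divisor}(2)}{\Omega^{p+1}_{\V,\divisor}(1)}
\stackrel{d}{\to}\cdots
\stackrel{d}{\to} \frac{\Omega^4_{\V,\divisor}(5-p)}{\Omega^4_{\V,\divisor}(4-p)}
\to 0~,
\end{equation}
in which the exterior derivative raises the form degree and the pole order simultaneously. In particular no auxiliary ``Koszul-type'' wedging with toric logarithmic differentials is needed (nor would it be compatible with the differentials) to land in top degree: after $4-p$ steps of the resolution one is already in $\Omega^4$. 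With this replacement your argument closes as you describe: exactness of the sequence is a local computation using transversality of $Z$ and $\divisor$; every term is $\Gamma$-acyclic by Lemma \ref{prop:cohom3} (the quotients inherit acyclicity from the long exact sequences); and the cohomology of the global-sections complex at the last spot is precisely $H^0(\V,\Omega^4_{\V,\divisor}(5-p))$ modulo $H^0(\V,\Omega^4_{\V,\divisor}(4-p))+dH^0(\V,\Omega^{3}_{\V,\divisor}(4-p))$.
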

\begin{proof}
The proposition follows from the exact sequence
\begin{equation}\nonumber
0\to \Omega_{\V}^p(\log(Z+\divisor))
 \,\hookrightarrow\,
     \Omega^p_{\V,\divisor}(1)
\stackrel{d}{\rightarrow}
  \frac{\Omega^{p+1}_{\V,\divisor}(2)}{\Omega^{p+1}_{\V,\divisor}(1)}
\stackrel{d}{\rightarrow}
\cdots
\stackrel{d}{\rightarrow}
  \frac{\Omega^4_{\V,\divisor}(5-p)}{\Omega^4_{\V,\divisor}(4-p)}
\rightarrow 0~,
\end{equation}
and Lemma \ref{prop:cohom3} below.
\end{proof}

\begin{lemma}\label{prop:cohom3} 
Let $k$ be a nonnegative integer, $p=1,2,3,4$.
\begin{equation}\nonumber
\begin{split}
1.\quad& H^q(\V,\Omega^p(\log (\divisor+\mathbb{E}_1+\mathbb{E}_2))
\otimes \mathcal{O}(kZ))=0 \quad(q>0)~,
\\
2.\quad&H^q(\V,\Omega^p(\log (\divisor+\mathbb{E}_i))\otimes \mathcal{O}(kZ))=0\quad(q>0,i=1,2)~,
\\
3.\quad &H^q(\V,\Omega^p_{\V,\divisor}(k))=0
\quad (q>0)~.
\end{split}
\end{equation}.
\end{lemma}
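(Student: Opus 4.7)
The plan is to deduce all three vanishings from the toric structure of $\V$, treating (1) as the base case (handled by a toric vanishing theorem) and obtaining (2) and (3) from (1) via Poincar\'e residue exact sequences.

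For part (1), observe that $\divisor + \mathbb{E}_1 + \mathbb{E}_2 = \sum_i \mathbb{D}_i + \sum_j \mathbb{E}_j$ is the \emph{entire} toric boundary $\partial \V$ of the smooth complete toric fourfold $\V$. On any smooth complete toric variety, the sheaf of $p$-forms with logarithmic poles along the full toric boundary is a trivial vector bundle: $\Omega^p_\V(\log \partial \V) \cong \bigwedge^p M \otimes_{\mathbb{Z}} \mathcal{O}_\V$, where $M$ is the character lattice. Hence part (1) reduces to $H^q(\V, \mathcal{O}(kZ))=0$ for $q>0$. Since $Z$ was cut out by the global section $\tilde{P}$ of $\mathcal{O}(\mathbb{E}_1+\mathbb{E}_2)$, this line bundle is globally generated, in particular nef, so Demazure's vanishing theorem on $\V$ gives the claim.

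For part (2), fix $\{i,j\}=\{1,2\}$ and twist the Poincar\'e residue exact sequence along the smooth toric divisor $\mathbb{E}_j$ by $\mathcal{O}(kZ)$:
\[
0 \to \Omega^p_\V(\log(\divisor+\mathbb{E}_i))\otimes\mathcal{O}(kZ) \to \Omega^p_\V(\log \partial \V)\otimes\mathcal{O}(kZ) \to (\iota_j)_*\Omega^{p-1}_{\mathbb{E}_j}(\log \partial \mathbb{E}_j)\otimes\mathcal{O}(kZ)|_{\mathbb{E}_j} \to 0.
\]
Here I use that the trace of the rest of the SNC divisor $\divisor+\mathbb{E}_i$ on the toric divisor $\mathbb{E}_j$ is exactly the full toric boundary $\partial \mathbb{E}_j$. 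The middle term has vanishing $H^q$ ($q>0$) by part (1); the right-hand term is handled by exactly the same toric argument as in part (1) applied to the smooth complete toric threefold $\mathbb{E}_j$, together with nefness of $\mathcal{O}(kZ)|_{\mathbb{E}_j}$. The long exact sequence then yields (2). For part (3), iterate: apply the residue sequence along $\mathbb{E}_i$ to $\Omega^p_\V(\log(\divisor+\mathbb{E}_i))\otimes \mathcal{O}(kZ)$, use (2) for the middle term, and use the same toric vanishing on $\mathbb{E}_i$ (whose toric boundary is $\divisor\cap\mathbb{E}_i$ after one more residue step if needed) for the cokernel term.

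The main obstacle is the base case (1), which rests on two standard but essential inputs about smooth complete toric varieties: the triviality of $\Omega^p(\log \partial)$ and Demazure vanishing for nef Cartier divisors. Everything else is a routine residue-sequence induction, provided one checks that $\mathcal{O}(Z)|_{\mathbb{E}_j}$ remains nef on the toric subvarieties. This follows from the explicit fan description of $\V$ as an $\mathbb{F}_1$-bundle over $\mathbb{P}_{\Sigma(\Delta)}$, in which $\mathbb{E}_1$ and $\mathbb{E}_2$ play the role of the two canonical sections and $\mathcal{O}(\mathbb{E}_1+\mathbb{E}_2)$ is globally generated by Laurent polynomials of the form $\tilde{P}$ in \eqref{def:tildeP}.
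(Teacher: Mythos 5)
Your overall strategy is the same as the paper's: part (1) via the toric triviality $\Omega^p_{\V}(\log \mathbb{D}_{\T})\cong \wedge^p M\otimes\mathcal{O}_{\V}$ for the full boundary $\mathbb{D}_{\T}=\divisor+\mathbb{E}_1+\mathbb{E}_2$ together with the vanishing $H^{q>0}(\V,\mathcal{O}(kZ))=0$ for the semiample twist, and then residue sequences to peel off $\mathbb{E}_1,\mathbb{E}_2$. (The paper handles part (3) with a single four\mbox{-}term Mayer--Vietoris-type sequence involving $\bigoplus_{i}\Omega^p(\log(\divisor+\mathbb{E}_i))$ and $\Omega^{p-2}_{\mathbb{E}_1\cap\mathbb{E}_2}(\log\divisor)$, whereas you iterate two-term residue sequences; both routes are viable.)

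There are, however, two gaps. First and most importantly, in each short exact sequence $0\to L\to M\to R\to 0$ the long exact sequence together with $H^{q>0}(M)=H^{q>0}(R)=0$ only gives $H^q(L)=0$ for $q\geq 2$; for $q=1$ one has $H^1(L)=\mathrm{coker}\big(H^0(M)\to H^0(R)\big)$, so the vanishing requires the residue map to be \emph{surjective on global sections}. This is precisely the extra ingredient the paper states and uses at each stage (surjectivity of $H^0(\V,\Omega^p(\log\mathbb{D}_{\T})\otimes\mathcal{O}(kZ))\to H^0(\mathbb{E}_2,\Omega^{p-1}_{\mathbb{E}_2}(\log(\divisor+\mathbb{E}_1))\otimes\mathcal{O}(kZ))$, and its analogue on $\mathbb{E}_1\cap\mathbb{E}_2$); your argument never addresses it, and it is not automatic — it amounts to a surjectivity of $H^0(\V,\mathcal{O}(kZ))\to H^0(\mathbb{E}_j,\mathcal{O}(kZ))$ combined with surjectivity of $\wedge^pM\to\wedge^{p-1}M_{\mathbb{E}_j}$. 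Second, in your step for part (3) the cokernel of the residue along $\mathbb{E}_i$ is $\Omega^{p-1}_{\mathbb{E}_i}(\log(\divisor\cap\mathbb{E}_i))$, and $\divisor\cap\mathbb{E}_i$ is \emph{not} the full toric boundary of $\mathbb{E}_i$: it omits the nonempty divisor $\mathbb{E}_1\cap\mathbb{E}_2$ (the rays $u_1,u_2$ span a $2$-cone of $\Sigma_{\V}$). So this sheaf is not a trivial bundle and ``the same toric argument as in part (1)'' does not apply to it directly; one must run another residue sequence inside $\mathbb{E}_i$ down to the surface $\mathbb{E}_1\cap\mathbb{E}_2$ — which you only gesture at with ``one more residue step if needed'' — and this again requires the corresponding $H^0$-surjectivity.
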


\begin{proof}
1. Let $\mathbb{D}_{\T}:=\divisor +\mathbb{E}_1+\mathbb{E}_2$. 
Note that this is the sum of all toric divisors in $\V$.
It is well known that
$\Omega_{\V}^p(\log\mathbb{D}_{\T})\cong 
         \mathcal{O}_{\V}\otimes \wedge^p M$ 
where $M$ is the dual lattice of $N\cong \mathbb{Z}^4$ (cf. \cite{Oda}).
On the other hand, 
since $\mathbb{E}_1+\mathbb{E}_2$ is semiample,
we have 
$H^q(\V,\mathcal{O}_{\V}(kZ))=0$ for $q>0$ \cite{Mavlyutov}.
Therefore
\begin{equation}\nonumber
H^q(\V,\Omega_{\V}^p(\log \mathbb{D}_{\T})\otimes \mathcal{O}_{\V}(kZ))
\cong H^q(\V,\mathcal{O}_{\V}(kZ))\otimes \wedge^p M~=0 \quad(q>0)~.
\end{equation}
2. 
As above, the following vanishing holds:
$$
H^q(\mathbb{E}_2,\Omega_{\mathbb{E}_2}^{p-1}\big(\log(\divisor+\mathbb{E}_1)\big)
\otimes\mathcal{O}_{\mathbb{E}_2}(kZ))
\cong H^q(\mathbb{E}_2,\mathcal{O}_{\mathbb{E}_2}(kZ))
\otimes \wedge^{p-1}\mathbb{Z}^3
=0\quad(q>0).
$$
Moreover, the map
$$
H^0(\V, \Omega_{\V}^p (\log\mathbb{D}_{\T})
 \otimes\mathcal{O}_{\V}(kZ)) \stackrel{\res_{\mathbb{E}_2}}{\rightarrow}
H^0(\mathbb{E}_2,
 \Omega_{\mathbb{E}_2}^{p-1}\big(\log(\divisor+\mathbb{E}_1)\big)
\otimes\mathcal{O}_{\mathbb{E}_2}(kZ)
)
$$
is surjective.
Taking the exact sequence of cohomology of
the exact sequence:
\begin{equation}\nonumber
\begin{split}
0\to 
\Omega_{\V}^p\big(\log (\divisor+\mathbb{E}_{1})\big)
&\otimes 
  \mathcal{O}_{\V}(kZ)
\rightarrow
\Omega_{\V}^p (\log\mathbb{D}_{\T})
 \otimes\mathcal{O}_{\V}(kZ)
\\&\stackrel{\res_{\mathbb{E}_2}}{\rightarrow}
\Omega_{\mathbb{E}_2}^{p-1}\big(\log(\divisor+\mathbb{E}_1)\big)
\otimes\mathcal{O}_{\mathbb{E}_2}(kZ)\to 0~,
\end{split}
\end{equation}
we obtain $$
H^q(\V,\Omega^p(\log (\divisor+\mathbb{E}_1))\otimes \mathcal{O}(kZ))=0
\quad (q>0)~.$$
The proof for $\mathbb{E}_2$ is similar.
\\
3. 
As above, we can show the vanishing
$$
H^q(\mathbb{E}_1\cap \mathbb{E}_2,
\Omega_{\mathbb{E}_1\cap \mathbb{E}_2}^{p-2}(\log\divisor)
\otimes\mathcal{O}(kZ) )
\cong H^q(\mathbb{E}_1\cap\mathbb{E}_2,
 \mathcal{O}_{\mathbb{E}_1\cap \mathbb{E}_2}(kZ))
\otimes \wedge^{p-2}\mathbb{Z}^2
=0\quad(q>0),
$$
and the surjectivity of the map
$$
\bigoplus_{i=1,2}
H^0(\V,\Omega_{\V}^p\big(\log (\divisor+\mathbb{E}_i)\big)
\otimes \mathcal{O}(kZ))
\stackrel{\res_{\mathbb{E}_1,\mathbb{E}_2}}{\rightarrow}
H^0(\mathbb{E}_1\cap \mathbb{E}_2,
\Omega_{\mathbb{E}_1\cap \mathbb{E}_2}^{p-2}(\log\divisor)
\otimes\mathcal{O}(kZ) ).
$$
Consider the exact sequence
\begin{equation}\nonumber
\begin{split}
0&\to \Omega_{\V}^p(\log \divisor)\otimes \mathcal{O}(kZ)
\to
\bigoplus_{i=1,2}\Omega_{\V}^p\big(\log (\divisor+\mathbb{E}_i)\big)\otimes \mathcal{O}(kZ)
\\
&\rightarrow
\Omega_{\V}^p(\log \mathbb{D}_{\T})
 \otimes\mathcal{O}(kZ)
\stackrel{\res_{\mathbb{E}_1,\mathbb{E}_2}}{\longrightarrow}
\Omega_{\mathbb{E}_1\cap \mathbb{E}_2}^{p-2}(\log\divisor)
\otimes\mathcal{O}(kZ)\rightarrow 0~.
\end{split}
\end{equation}
Taking the cohomology,
we obtain the statement 3. 
\end{proof}

Let $\tilde{\Delta}(k)$ be the polyhedron defined 
by applying \eqref{def:ktriangle}
to $\tilde{\Delta}$ defined in \eqref{def:tildeDelta}
and let 
$\tilde{\Delta}[k]$ be the following $4$-dimensional polyhedron:
\begin{equation}\nonumber
\begin{split}
\tilde{\Delta}[k]:=
\big\{&
(m_1,m_2,m_3,m_4)\in\mathbb{R}^4 \mid
\\&(m_1,m_2)\in \Delta(-m_3-1),~
-k+1\leq m_3\leq 0,~m_4\geq -k,~m_3-m_4\geq 0~\big\}~.
\end{split}
\end{equation}

\begin{proposition}\label{prop:hodge3}
Let $k$ be a positive integer $k\geq 1$. 
\begin{equation}\nonumber
\begin{split}
1.\quad&
H^0(\V,\Omega^4_{\V,\divisor}(k))=
\bigoplus_{m\in \tilde{\Delta}(k-1)\cap\mathbb{Z}^4}\mathbb{C}\,
\frac{t^m }{\tilde{P}^k}
\frac{dt_1}{t_1}\frac{dt_2}{t_2}
  \frac{dt_3}{t_3}\frac{dt_4}{t_4}~.
\\
2.\quad&
H^0(\V,\Omega^3_{\V,\divisor}(k))=
\bigoplus_{m\in \tilde{\Delta}[k]\cap \mathbb{Z}^4}
\mathbb{C}\,\frac{t^m}{\tilde{P}^k}\frac{dt_1}{t_1}\frac{dt_2}{t_2}\frac{dt_3}{t_3}\,
\\&
\oplus
\bigoplus_{m\in \tilde{\Delta}(k-1)\cap\mathbb{Z}^4}\Big[
\mathbb{C}\,\frac{t^m}{\tilde{P}^k}\frac{dt_1}{t_1}\frac{dt_2}{t_2}\frac{dt_4}{t_4}
\oplus 
\mathbb{C}\,\frac{t^m}{\tilde{P}^k}\frac{dt_1}{t_1}\frac{dt_3}{t_3}\frac{dt_4}{t_4}
\,\oplus
\mathbb{C}\,\frac{t^m}{\tilde{P}^k}\frac{dt_2}{t_2}\frac{dt_3}{t_3}\frac{dt_4}{t_4}
\Big]~.
\\
3.\quad&
\frac{H^0(\V,\Omega^4_{\V,\divisor}(k+1))}
{H^0(\V,\Omega^n_{\V,\divisor}(k))+
dH^0(\V,\Omega^{3}_{\V,\divisor}(k))}
\cong {R}^{k}_F~,
\end{split}
\end{equation}
where the isomorphism is induced from the
map
\begin{equation}\nonumber
\ring_{\Delta}^{k}
\to H^0(\V,\Omega^4_{\V,\divisor}(k+1))~,\quad
t_0^k t_1^{m_1}t_2^{m_2}\mapsto
\frac{(-1)^k k! t_1^{m_1}t_2^{m_2}}{\tilde{P}^{k+1}(t_3t_4)^k}
\frac{dt_1}{t_1}\frac{dt_2}{t_2}
\frac{dt_3}{t_3}\frac{dt_4}{t_4}~.
\end{equation}
\end{proposition}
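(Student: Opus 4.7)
The plan is to follow Batyrev's method for affine hypersurfaces in algebraic tori (\cite[\S 6--8]{Batyrev}), modified for the fact that our $Z$ lives in the $4$-dimensional toric variety $\V$ rather than $\T^4$. The three parts are handled in order, with parts 1 and 2 being routine toric geometry and part 3 carrying the substance.

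For part 1, I would start from the standard isomorphism $\Omega^4_{\V}(\log \mathbb{D}_{\T}) \cong \mathcal{O}_{\V}$ with generator $\frac{dt_1}{t_1}\frac{dt_2}{t_2}\frac{dt_3}{t_3}\frac{dt_4}{t_4}$, where $\mathbb{D}_{\T} = \divisor + \mathbb{E}_1 + \mathbb{E}_2$. Since $Z \sim \mathbb{E}_1 + \mathbb{E}_2$, restricting to log poles only along $\divisor$ gives $\Omega^4_{\V,\divisor}(k) \cong \mathcal{O}_{\V}((k-1)(\mathbb{E}_1 + \mathbb{E}_2))$ times the canonical top form. The global sections of $\mathcal{O}_{\V}(r(\mathbb{E}_1+\mathbb{E}_2))$ are indexed by lattice points in $r\tilde{\Delta}$, which is precisely the definition of $\tilde{\Delta}(r)$; setting $r=k-1$ and dividing by $\tilde{P}^k$ gives the claimed basis.

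For part 2, I would use the decomposition $\Omega^3_{\V}(\log \mathbb{D}_{\T}) \cong \mathcal{O}_{\V}^{\oplus 4}$ given by the four choices of which $\frac{dt_i}{t_i}$ to omit. For each summand, I analyze separately the pole order along $\mathbb{E}_1$ and $\mathbb{E}_2$ that the remaining three logarithmic factors impose. The summands omitting $\frac{dt_1}{t_1}$, $\frac{dt_2}{t_2}$, or $\frac{dt_3}{t_3}$ pick up an extra factor of the monomial on the corresponding coordinate direction, which relaxes the $\mathbb{E}_j$ pole conditions in the same way that went into $\tilde{\Delta}(k-1)$. The summand with $\frac{dt_1}{t_1}\frac{dt_2}{t_2}\frac{dt_3}{t_3}$ behaves differently because removing $\frac{dt_4}{t_4}$ forces an additional vanishing along $\mathbb{E}_2$, which is exactly the condition defining $\tilde{\Delta}[k]$.

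The main work is part 3. I would define the map $\ring_{\Delta}^k \to H^0(\V,\Omega^4_{\V,\divisor}(k+1))$ as specified and first check it intertwines $\sum_{i=0}^n \mathcal{D}_i \ring_{\Delta}$ with the subspace $H^0(\V,\Omega^4_{\V,\divisor}(k)) + dH^0(\V,\Omega^3_{\V,\divisor}(k))$. Concretely, the exact forms
\[
d\Big(\frac{t^m}{\tilde{P}^k}\,\frac{dt_{i_1}}{t_{i_1}}\frac{dt_{i_2}}{t_{i_2}}\frac{dt_{i_3}}{t_{i_3}}\Big)
\]
produce, after multiplying out, precisely the Jacobian-type relations corresponding to $\mathcal{D}_i$ acting on $\ring_{\Delta}$, while adding an element already in $H^0(\V,\Omega^4_{\V,\divisor}(k))$; the pole reduction $\frac{1}{\tilde{P}^k} \to \frac{1}{\tilde{P}^{k-1}}$ accounts for the lower-pole term. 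So the map descends to a well-defined surjection $\ring_{\Delta}^k/\sum \mathcal{D}_i \ring_{\Delta}\to $ quotient. To promote this to an isomorphism with $R_F^k$, I would show that the pole-order filtration splits so that passing to $\mathrm{Gr}^{-k}_{\mathcal{E}}\mathcal{R}_{F}=R_F^k$ is compatible with the statement. The hard part will be the surjectivity of the map from $\ring_{\Delta}^k$ onto the quotient: one must show any $\frac{t^m}{\tilde{P}^{k+1}}\cdot\omega$ with $m \in \tilde{\Delta}(k)\cap\mathbb{Z}^4$ can be reduced, modulo $H^0(\V,\Omega^4_{\V,\divisor}(k))$ and exact forms, to a form with $m = (m_1,m_2,-k,-k)$, which is the monomial contributed by $\ring_{\Delta}^k$. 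This reduction is the analogue of Batyrev's argument reducing generic lattice points to a ``top-slice'' by repeatedly applying differential identities and subtracting exact forms; verifying that the recursive reduction terminates inside $\tilde{\Delta}(k)$ (not leaving it) is where the specific shape of $\tilde{\Delta}$ and $\tilde{\Delta}[k]$ from parts 1 and 2 must be used, together with the $\Delta$-regularity of $F$ to guarantee that the Jacobian relations $t_0 F, t_0\theta_{t_i}F$ span enough directions.
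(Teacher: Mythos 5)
Your proposal is correct and follows essentially the same route as the paper: parts 1 and 2 are the standard toric computation of global sections (the paper phrases it as a \v{C}ech computation over the affine charts of the fan, you phrase it via the trivialization $\Omega^p_{\V}(\log\mathbb{D}_{\T})\cong\mathcal{O}_{\V}\otimes\wedge^pM$ and the polytope description of sections of $\mathcal{O}(r(\mathbb{E}_1+\mathbb{E}_2))$ — the same calculation in different packaging), and part 3 is deduced from 1 and 2 by matching the exact forms $dH^0(\V,\Omega^3_{\V,\divisor}(k))$ with the Jacobian relations. Your additional remarks on the reduction of a general $m\in\tilde{\Delta}(k)$ to the slice $(m_1,m_2,-k,-k)$ correctly identify where the explicit bases of parts 1--2 and the $\Delta$-regularity are used; this is the content the paper leaves implicit in ``the third statement follows from the first and the second.''
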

\begin{proof}
The statements 1 and 2 can be shown by 
calculation of the Cech cohomology associated to 
the open cover given by the toric fan. 
The third statement follows from the first and the second. 
\end{proof}

\begin{theorem} \label{prop:Hodge}
The Hodge filtration on $H^3(Z^{\circ})$ satisfies 
\begin{equation}\begin{split}\nonumber
&\mathrm{Gr}_{\hodge}^{p}H^3(Z^{\circ})\cong R_F^{3-p}\, ,
\quad(0\leq p\leq 3)~.
\end{split}
\end{equation}
\end{theorem}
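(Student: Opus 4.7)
The strategy is to assemble the graded piece $\mathrm{Gr}^p_{\hodge} H^3(Z^\circ)$ from the three propositions already proved, bridging the Hodge filtration computed via meromorphic forms on $\V\setminus Z$ (the perspective used in the residue description of $H^3(Z^\circ)$) with the logarithmic de Rham complex on the smooth compactification $(Z,D)$ constructed in \S\ref{sec:compact}.

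The starting point is Deligne's description of the Hodge filtration on the cohomology of an open smooth variety. Since $Z$ is a smooth projective toric hypersurface and $D=Z\setminus Z^\circ$ is a simple normal crossing divisor (both verified in \S\ref{sec:compact}), the Hodge--de Rham spectral sequence
\[
E_1^{p,q}=H^q(Z,\Omega^p_Z(\log D))\Longrightarrow H^{p+q}(Z^\circ)
\]
degenerates at $E_1$, giving a canonical isomorphism
\[
\mathrm{Gr}^p_{\hodge} H^3(Z^\circ)\;\cong\; H^{3-p}(Z,\Omega^p_Z(\log D))\qquad (0\leq p\leq 3).
\]

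Next, apply Proposition \ref{prop:hodge1} with the index $p+1$ in place of $p$, which produces a residue isomorphism
\[
H^{3-p}\bigl(\V,\Omega^{p+1}_\V(\log(Z+\divisor))\bigr)
\;\stackrel{\res_Z}{\longrightarrow}\;
H^{3-p}(Z,\Omega^p_Z(\log D)).
\]
Then Proposition \ref{prop:hodge2} (again with index $p+1$) expresses the left-hand side as a quotient of spaces of global meromorphic forms,
\[
H^{3-p}\bigl(\V,\Omega^{p+1}_\V(\log(Z+\divisor))\bigr)
\;\cong\;
\frac{H^0(\V,\Omega^4_{\V,\divisor}(4-p))}
{H^0(\V,\Omega^4_{\V,\divisor}(3-p))+dH^0(\V,\Omega^3_{\V,\divisor}(3-p))},
\]
and finally Proposition \ref{prop:hodge3} statement (3), applied with $k=3-p$, identifies this quotient with $R_F^{3-p}$. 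Composing the four isomorphisms yields the claim.

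The only nontrivial conceptual ingredient is the very first step, which requires the existence of a smooth projective compactification of $Z^\circ$ with SNC boundary and the resulting $E_1$-degeneration; both have been secured by the explicit toric construction of $(\V, Z, D)$. Everything else is a mechanical assembly of Propositions \ref{prop:hodge1}--\ref{prop:hodge3}, so I anticipate no further obstacle. As a consistency check, the theorem recovers the expected dimensions $\dim\mathrm{Gr}^p_{\hodge}H^3(Z^\circ)=\dim R_F^{3-p}$ via Lemma \ref{lem:hodgenumber} and the known dimensions of $R_F^i$ for $2$-dimensional reflexive polyhedra, and it is compatible with the filtration description stated after Theorem \ref{prop:weight2} in the summary of \S\ref{section:threefold}.
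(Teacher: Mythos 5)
Your proposal is correct and follows essentially the same route as the paper: the canonical identification $\mathrm{Gr}^p_{\hodge}H^3(Z^{\circ})\cong H^{3-p}(Z,\Omega^p_Z(\log D))$ from Deligne's theory, followed by the chain of isomorphisms from Propositions \ref{prop:hodge1}, \ref{prop:hodge2} and \ref{prop:hodge3} with the index shifts you indicate (for $p=3$ one uses the second displayed isomorphism of Proposition \ref{prop:hodge2} and part 1 of Proposition \ref{prop:hodge3} rather than the generic quotient formula, but this is immediate). The paper's own proof is exactly this assembly, stated more tersely.
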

\begin{proof}
As is well known, there are canonical isomorphisms
${\rm Gr}_{\hodge}^p H^3(Z^{\circ})\cong H^{3-p}(Z,\Omega^{p}_Z(\log D))$.
The theorem follows from Propositions \ref{prop:hodge1}, \ref{prop:hodge2}
and \ref{prop:hodge3}.
\end{proof}
%%%%%%%%%%%%%%%%%%%%%%%%%%%%%%%%%%%%%%%%%%%%%%%%%%%%%%%%%%%%%%%%%%%%%%%%%%%%%%%%%%%%%%%%%%%%%%
%%%%%%%%%%%%%%%%%%%%%%%%%%%%%%%%%%%%%
\subsection{The weight filtrations}
%%%%%%%%%%%%%%%%%%%%%%%%%%%%%%%%%%%%
%
\begin{proposition}\label{prop:weight3}
\begin{equation}\begin{split}\nonumber
&\dim {\rm Gr}^\weight_6= 1 ~,\quad
\dim {\rm Gr}^{\weight}_5=0~,
\\ &
\dim {\rm Gr}^{\weight}_4=l(\Delta)-4~,\quad
\dim {\rm Gr}^{\weight}_3=2~.
\end{split}\end{equation}
\end{proposition}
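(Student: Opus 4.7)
The strategy is to adapt Batyrev's computation of the weight filtration on $PH^{n-1}(V_a^{\circ})$ in \cite[\S\S 6--8]{Batyrev} to the present setting, thereby identifying the weight filtration on $H^3(Z^{\circ}_a)$ with the $\mathcal{I}$-filtration on $\mathcal{R}_{F_a}$ (suitably shifted) under the isomorphism $\rho'$ of Corollary \ref{prop:A2}. Once this identification is in hand, the stated dimensions follow at once from \eqref{reflexiveRF}, recalling that $\dim \mathcal{R}_{F_a}=l(\Delta)-1$.

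Concretely, I would introduce on the global logarithmic de Rham complex $\Gamma\Omega^{\bullet}_{\mathbf{V}}(\log\mathbb{D})(*Z)$ the increasing filtration indexed by the ideals $I_{\tilde{\Delta}}^{(j)}\subset \mathbf{S}_{\tilde{\Delta}}$, under which codimension-$j$ faces of the Newton polytope $\tilde{\Delta}$ correspond to $j$-fold intersections of irreducible components of the boundary divisor $D=Z\cap\mathbb{D}$. Following the pattern of Propositions \ref{prop:hodge2}--\ref{prop:hodge3} and using the vanishings in Lemma \ref{prop:cohom3}, the Poincar\'e residue transports this face filtration to a filtration on $H^3(Z^{\circ}_a)$ whose graded pieces are controlled by $H^{\bullet}(D^{[\bullet]})$; by Deligne's general theory this is precisely the weight filtration. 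Under $\rho'$ one then obtains $\rho'(\mathcal{I}_1)=\weight_3 H^3(Z^{\circ}_a)$, $\rho'(\mathcal{I}_3)=\weight_4 H^3(Z^{\circ}_a)=\weight_5 H^3(Z^{\circ}_a)$, and $\rho'(\mathcal{I}_4)=\weight_6 H^3(Z^{\circ}_a)=H^3(Z^{\circ}_a)$. The equalities $\dim\mathcal{I}_1=2$, $\dim\mathcal{I}_3/\mathcal{I}_1=l(\Delta)-4$, and $\dim\mathcal{R}_{F_a}/\mathcal{I}_3=1$ that one reads off from \eqref{reflexiveRF} then yield the claim.

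As an independent consistency check on ${\rm Gr}^{\weight}_3$, the classical surjection $H^3(Z)\twoheadrightarrow {\rm Gr}^{\weight}_3 H^3(Z^{\circ}_a)$ combined with $\dim H^3(Z)=h^{1,2}+h^{2,1}=2$ from Lemma \ref{lem:hodgenumber} already forces $\dim{\rm Gr}^{\weight}_3\leq 2$, matching the answer. The main obstacle is the precise identification of the face/pole filtration with Deligne's weight filtration in the present non-toric-affine setting: unlike in \cite{Batyrev} the ambient variety $\mathbf{V}$ is not a product of algebraic tori but a Hirzebruch-surface bundle over $\mathbb{P}_{\Sigma(\Delta)}$, so the combinatorial correspondence between codimension-$j$ faces of $\tilde{\Delta}$ and strata of $D$ must be verified directly from the fan $\Sigma_{\mathbf{V}}$ constructed in \S\ref{sec:compact}. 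This is the bookkeeping-intensive step where the modifications of Batyrev's arguments are concentrated.
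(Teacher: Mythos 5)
Your proposed route is genuinely different from the paper's, but as written it has a structural gap. The paper proves this proposition directly with the weight spectral sequence of the simple normal crossing compactification: writing $D=Z\setminus Z^{\circ}$ as a union of $r+2$ components and letting $D^{(k)}$ be the disjoint union of $k$-fold intersections, one has ${}_{\weight}E_1^{p,q}\cong H^{2p+q}(D^{(-p)})$ with Gysin differentials, degeneration at $E_2$, and $\mathrm{Gr}^{\weight}_{-p+3}H^3(Z^{\circ})\cong {}_{\weight}E_2^{p,3-p}$; the stated dimensions are then read off from the cohomology of the explicit toric boundary strata. You instead propose to first identify the weight filtration with the $\mathcal{I}$-filtration under $\rho'$ and then read off dimensions from \eqref{reflexiveRF}. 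The difficulty is that this identification is exactly the content of the \emph{next} proposition in the paper, and its proof there only establishes inclusions between the face/pole-order filtrations $\mathcal{V},\mathcal{V}',\mathcal{V}''$ and concludes equality ``by the dimension consideration'' --- i.e., by invoking the dimension count of the present proposition. So if you follow the paper's machinery, your argument is circular; if you intend to prove the identification directly in the style of Batyrev, that is precisely the step you defer as ``bookkeeping,'' and it is nontrivial here because $\mathbf{V}$ is not a torus and the supporting results you cite (Propositions \ref{prop:hodge2}--\ref{prop:hodge3}, Lemma \ref{prop:cohom3}) concern the Hodge filtration and pole order, not the weight filtration.

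The same issue affects your consistency check: the injectivity of $H^3(Z)\to\mathcal{W}_3H^3(Z^{\circ}_a)$ needed to upgrade $\dim\mathrm{Gr}^{\weight}_3\leq 2$ to an equality is itself verified in the paper by comparing Lemma \ref{lem:hodgenumber} with Proposition \ref{prop:weight3}, so it cannot be used as independent input. To repair your argument along its own lines you would need to carry out the full analogue of Batyrev's identification of the face filtration with Deligne's weight filtration for the compactification $Z\subset\mathbf{V}$ without appeal to the dimension count; the paper's spectral-sequence computation is the shorter and self-contained path.
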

\begin{proof} 
The divisor $D=Z\setminus Z^{\circ}$ consists of $r+2$ components.
Define 
$D^{(k)}$ to be the disjoint union of intersections of $k$ components for $k=1,2,3$ and $D^{(0)}:=Z$.
Consider the spectral sequence ${}_\weight E$ of
the hypercohomology $\mathbb{H}^k(Z,\Omega^{\bullet}_{Z}(\log D))$ 
associated to the decreasing weight filtration $\weight^{-l}:=\weight_l$.
This spectral sequence degenerates at ${}_{\weight}E_2$.
We have 
$
{}_{\weight}E_1^{p,q}\cong H^{2p+q}(D^{(-p)},\mathbb{C})
$
and the differential $d_1:H^{2p+q}(D^{(-p)},\mathbb{C})\to H^{2p+q+2}(D^{(-p-1)},\mathbb{C})$ is given by the Gysin morphism
(see e.g. \cite[Corollary 8.33, Proposition 8.34]{Voisin}).
Computing the cohomology of $d_1$, we obtain the following result.
\begin{equation}\nonumber
\mathrm{dim}\,{}_{\weight}E_2^{p,q}
=\hspace*{1cm}
\begin{array}{r|rccc}
&p=0&-1&-2&-3\\\hline
q=0&1&0&0&0\\
1  &0&0&0&0\\
2  &0&2&0&0\\
3  &2&0&0&0\\
4  &0&l(\Delta)-4&1&0\\
5  &0&0&0&0\\
6  &0&0&0&1
\end{array} ~~~~~.
\end{equation}
The dimensions of 
the graded quotients $
\text{Gr}_{-p+3}^{\weight}H^3(Z^{\circ})\cong 
{}_{\weight}E_2^{p,3-p}$ can be read from this table.
\end{proof}

\begin{proposition}
The weight filtration on $H^4(\T^2\times \mathbb{C}^2\setminus Z^{\circ})$
is 
\begin{equation}\begin{split}\nonumber
&\weight_8 H^4(\T^2\times \mathbb{C}^2\setminus Z^{\circ})=\mathcal{I}_4=
\mathcal{R}_F~, 
\\
&\weight_7H^4(\T^2\times \mathbb{C}^2\setminus Z^{\circ})=
\weight_6H^4(\T^2\times \mathbb{C}^2\setminus Z^{\circ})=\mathcal{I}_3~,
\\&
\weight_5 H^4(\T^2\times \mathbb{C}^2\setminus Z^{\circ})=\mathcal{I}_1~.
\end{split}
\end{equation}
\end{proposition}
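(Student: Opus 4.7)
The plan is to exploit the smooth compactification $(\V, Z+\mathbb{D})$ constructed in \S A.2: because $Z \cup \mathbb{D}$ is a simple normal crossing divisor on $\V$ and $\T^2\times\mathbb{C}^2\setminus Z^\circ = \V\setminus(Z\cup\mathbb{D})$, the weight filtration on $H^4$ is by Deligne's theory computed from the pole-order filtration on $\Omega^\bullet_\V(\log(Z+\mathbb{D}))$. The first step is to translate the combinatorial definition of $I^{(j)}_\Delta$ into pole-order data on $\V$. A monomial $t_0^k t^m$ with $m\in \Delta(k)$ maps under $R'$ to a form whose pole along $Z$ has order $k+1$ and whose behaviour along each toric boundary component $\mathbb{D}_i,\mathbb{E}_3,\mathbb{E}_4$ is controlled by whether $m$ lies on the face of $\Delta$ dual to that divisor. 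The condition ``$m$ does not belong to any face of codimension $j$'' then dictates that $R'(t_0^k t^m)$ acquires at most $j$ genuine log poles among the toric divisors.

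With this translation in hand, I would show $R'(\mathcal{I}_j) \subset \mathcal{W}_{j+4}$ by running a calculation parallel to Batyrev's argument for affine hypersurfaces in tori \cite[\S 8]{Batyrev}, modified for the fact that our ambient variety $\V$ is a Hirzebruch-surface bundle over the toric surface $\mathbb{P}_{\Sigma(\Delta)}$ rather than a torus. Concretely, one reduces each representative modulo the exact forms $\mathcal{D}'\M\otimes\wedge^3\mathbb{C}^4$ appearing in the proof of Proposition \ref{prop:A1}, and checks that taking iterated residues along the toric boundary strata produces classes supported on lower-dimensional toric strata of $\V$, which in turn translates into the pole-order drop that defines the weight subcomplex.

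To upgrade these inclusions to equalities I would invoke the Poincar\'e residue isomorphism
$\mathrm{Res}: H^4(\T^2\times\mathbb{C}^2\setminus Z^\circ)\xrightarrow{\cong} H^3(Z^\circ)$,
which is a morphism of MHS of type $(-1,-1)$ and hence shifts weights by $-2$. Combining this with Proposition \ref{prop:weight3} gives $\dim\mathcal{W}_8 = l(\Delta)-1$, $\dim\mathcal{W}_7=\dim\mathcal{W}_6=l(\Delta)-2$, $\dim\mathcal{W}_5=2$, matching $\dim\mathcal{I}_4,\dim\mathcal{I}_3,\dim\mathcal{I}_1$ as read off from \eqref{reflexiveRF}. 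The vanishing of $\mathrm{Gr}^W_5 H^3(Z^\circ)$ explains the coincidence $\mathcal{W}_7=\mathcal{W}_6$.

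The main obstacle I expect is in the second step: verifying that the pole-order filtration matches the $\mathcal{I}$-filtration at the level of cohomology classes, not merely representatives. A representative with many poles along $\mathbb{D}$ might, after subtracting an exact form $\mathcal{D}'\eta$, descend to a class of strictly lower weight; one therefore has to choose representatives carefully (using the residue sequences associated to each toric divisor to put each class in a reduced form) and check that the combinatorial constraint on $m$ is preserved under these reductions. This is where the geometry of $\V$ as a non-toric-hypersurface situation (unlike Batyrev's torus case) requires the most care, and where Mavlyutov's vanishing theorem used in Lemma \ref{lem:hodgenumber} should help eliminate spurious contributions.
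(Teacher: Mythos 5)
Your endgame is the right one and is exactly the paper's: Proposition \ref{prop:weight3}, combined with the fact that the residue isomorphism $H^4(\T^2\times\mathbb{C}^2\setminus Z^{\circ})\cong H^3(Z^{\circ})$ shifts weights by $-2$, pins down all dimensions ($\dim\mathcal{W}_5=2$, $\dim\mathcal{W}_6=\dim\mathcal{W}_7=l(\Delta)-2$, $\dim\mathcal{W}_8=l(\Delta)-1$, matching $\mathcal{I}_1$, $\mathcal{I}_3$, $\mathcal{I}_4$), so a single one-sided inclusion between the weight filtration and the $\mathcal{I}$-filtration finishes the proof. The gap is in that inclusion. You propose to prove $R'(\mathcal{I}_j)\subset\mathcal{W}_{j+4}$ by reading off log-pole orders of the explicit representatives $R'(t_0^kt^m)$, but these representatives have a pole of order $k+1$ along $Z$ and are not logarithmic; no weight can be assigned to them until one performs the Griffiths-type pole reduction along $Z$ (the content of Propositions \ref{prop:hodge1}--\ref{prop:hodge3}) while simultaneously controlling the log poles the reduced representative acquires along the $\mathbb{D}_i$ \emph{and} along $\mathbb{E}_3$, $\mathbb{E}_4$ --- divisors which your face/pole-order dictionary, formulated only for the components dual to faces of $\Delta$, does not see at all. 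You name this obstacle yourself but do not resolve it, and it is the entire content of the step.

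The paper avoids the issue by proving the opposite inclusion $\mathcal{W}_{k+4}\subset\mathcal{I}_{\bullet}$ at the level of subsheaves rather than of representatives. It introduces three filtrations $\mathcal{V}$, $\mathcal{V}'$, $\mathcal{V}''$ of $\Omega^{\bullet}_{\V}(\log(Z+\mathbb{D}))$: $\mathcal{V}$ is the usual weight subcomplex (total log-pole count along $Z+\mathbb{D}$); $\mathcal{V}'$ counts log poles only along the $\mathbb{D}_i$ on the partial compactification $U=\V\setminus(Z\cup\mathbb{E}_3\cup\mathbb{E}_4)$ and is identified with the $\mathcal{I}$-filtration by Batyrev's torus argument; and $\mathcal{V}''$ interpolates between them. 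The required containments $\mathcal{V}_k\subset\mathcal{V}''_k$ ($k=1,2$) and $\mathcal{V}_4\subset\mathcal{V}''_3$ are then elementary inclusions of subcomplexes --- no choice of representatives and no pole reduction along $Z$ is ever needed --- and the divisors $\mathbb{E}_3,\mathbb{E}_4$ you omit are precisely what produces the index drop from $\mathcal{V}_4$ to $\mathcal{V}''_3$ (at most two of the $\mathbb{D}_i$ meet in $\V$, so a $4$-form with a log pole along $Z$ restricts to $U$ with at most two log poles along $U\cap\mathbb{D}$). To salvage your direction of inclusion you would have to exhibit the reduced logarithmic representatives explicitly, which is substantially more work than the argument you sketch; the sheaf-level comparison is the efficient route.
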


\begin{proof}
We consider  
three filtrations $\mathcal{V},\mathcal{V}',\mathcal{V}''$ 
on $H^4(\T^2\times \mathbb{C}^2\setminus Z^{\circ})$
and compare them.
First  we define
\begin{equation}\nonumber
\mathcal{V}_k\Omega_{\V}^i(\log (Z+\mathbb{D})):=
\Omega_{\V}^{i-k}\wedge 
  \Omega_{\V}^k(\log(Z+\mathbb{D}))~\quad
(0\leq k\leq 4).
\end{equation}
This induces the weight filtration 
$\mathcal{V}_k H^i(\T^2\times \mathbb{C}^2\setminus Z^{\circ})
=\mathcal{W}_{k+4}H^i (\T^2\times \mathbb{C}^2\setminus Z^{\circ})$.
We have already computed the dimension of graded quotients
in Proposition \ref{prop:weight3}.

Second, let $U:=\V\setminus(Z\cup \mathbb{E}_3\cup\mathbb{E}_4)$.
We define
\begin{equation}\nonumber
\mathcal{V}'_k\Omega_U^i(\log (U\cap \mathbb{D}))
:= \Omega_U^{i-k}\wedge \Omega^k_U(\log(U\cap\mathbb{D}))~\quad(k=0,1,2).
\end{equation}
This induces another filtration $\mathcal{V}'$ on 
$H^4(\mathbb{T}^2\times\mathbb{C}^2\setminus Z^{\circ})$.
As in \cite[\S8]{Batyrev},
this is given by the $\mathcal{I}$-filtration on $\mathcal{R}_F$:
\begin{equation}\nonumber
\mathcal{V}_0'
H^4(\mathbb{T}^2\times\mathbb{C}^2\setminus Z^{\circ})
\cong \mathcal{I}_1~,\quad
\mathcal{V}_1'
H^4(\mathbb{T}^2\times\mathbb{C}^2\setminus Z^{\circ})
\cong\mathcal{I}_3~,\quad
\mathcal{V}_2'
H^4(\mathbb{T}^2\times\mathbb{C}^2\setminus Z^{\circ})
\cong\mathcal{I}_4~.
\end{equation}
Third, let $j:U\to \V$ be the inclusion and define
\begin{equation}\nonumber
\mathcal{V}''_k\Omega_{\V}^i(\log(Z+\mathbb{D}))
:=
\Omega_{\V}^{i-k}\wedge \Omega_{\V,\mathbb{D}}^k+
(\mathcal{V}_{k-1}'j_*\Omega_U^i(\log(U\cap \mathbb{D})))\cap \Omega_{\V}^i(\log (Z+\mathbb{D}))~.
\end{equation}
Since $H^4(\T^2\times \mathbb{C}^2)=0$,
the first term does not contribute to $H^4(\T^2\times \mathbb{C}^2\setminus Z^{\circ})$.
So the induced filtration is related to $\mathcal{V}'$ by 
\begin{equation}\nonumber
\begin{split}
&\mathcal{V}''_kH^4(\T^2\times \mathbb{C}^2\setminus Z^{\circ})=
\mathcal{V}'_{k-1}H^4(\T^2\times \mathbb{C}^2\setminus Z^{\circ})~
\quad(k=1,2,3)~.
\end{split}
\end{equation}
Moreover it holds that 
\begin{equation}\nonumber
\begin{split}
&\mathcal{V}_k\Omega_{\V}^i(\log(Z+\mathbb{D}))
\subset
\mathcal{V}_k''\Omega_{\V}^i(\log(Z+\mathbb{D}))~\quad(k=1,2)~,
\\
&\mathcal{V}_4\Omega_{\V}^i(\log(Z+\mathbb{D}))
\subset
\mathcal{V}_3''\Omega_{\V}^i(\log(Z+\mathbb{D}))~.
\end{split}
\end{equation}
Therefore  we have
$$
\mathcal{V}_kH^4(\T^2\times \mathbb{C}^2\setminus Z^{\circ})
\subset
\mathcal{V}_{k-1}'H^4(\T^2\times \mathbb{C}^2\setminus Z^{\circ})~~(k=1,2)~,
\quad
\mathcal{V}_4H^4(\T^2\times \mathbb{C}^2\setminus Z^{\circ})
\subset
\mathcal{V}_2'H^4(\T^2\times \mathbb{C}^2\setminus Z^{\circ})~.
$$
By the dimension consideration, we see that the proposition holds.
\end{proof}

Since taking the residue map 
$H^4(\T^2\times \mathbb{C}^2\setminus Z^{\circ})
\to H^3(Z^{\circ})$ decreases the weight by $2$, we obtain

\begin{theorem}\label{prop:weight2}
The weight filtration on 
$H^3(Z^{\circ})$ is as follows.
\begin{equation}\begin{split}\nonumber
&\weight_6H^3(Z^{\circ})\cong\mathcal{R}_F~,
\\
&\weight_5H^3(Z^{\circ})=
\weight_4 H^3(Z^{\circ})\cong\mathcal{I}_3~,
\\
&
\weight_3H^3(Z^{\circ})\cong\mathcal{I}_1~.
\end{split}
\end{equation}
\end{theorem}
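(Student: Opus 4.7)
The plan is to deduce Theorem \ref{prop:weight2} directly from the preceding proposition describing the weight filtration on $H^4(\T^2\times \mathbb{C}^2\setminus Z^{\circ})$. The key tool is the Poincar\'e residue isomorphism
$$
\mathrm{Res}: H^4(\T^2\times \mathbb{C}^2\setminus Z^{\circ}) \xrightarrow{\ \cong\ } H^3(Z^{\circ})
$$
established in \eqref{eq:PoincareResidueMap}. This map is a morphism of mixed Hodge structures; more precisely, it is of type $(-1,-1)$, i.e.\ it shifts the weight filtration down by $2$ (and the Hodge filtration down by $1$). This is the standard behavior of the residue along a smooth divisor in an ambient open variety, and it holds because $\mathrm{Res}$ is, up to a factor of $2\pi\sqrt{-1}$, a Tate twist of an iso of pure Hodge structures on graded pieces.

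First I would identify the source of the residue map with $\mathcal{R}_{F_a}$ using Proposition \ref{prop:A1}, so that the filtrations appearing in the previous proposition are transported to filtrations on $\mathcal{R}_{F_a}$ expressed via the ideals $\mathcal{I}_j$. Next, using Corollary \ref{prop:A2}, the target $H^3(Z^{\circ})$ is identified with the same vector space $\mathcal{R}_{F_a}$ through $\rho'=\mathrm{Res}\circ R'$. Under these identifications the residue map becomes the identity on $\mathcal{R}_{F_a}$.

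Then the weight-$2$ shift gives, for each $k$,
$$
\weight_{k-2}H^3(Z^{\circ}) \;\cong\; \weight_k H^4(\T^2\times \mathbb{C}^2\setminus Z^{\circ}).
$$
Substituting the values $\weight_5=\mathcal{I}_1$, $\weight_6=\weight_7=\mathcal{I}_3$, $\weight_8=\mathcal{R}_{F_a}$ from the previous proposition yields exactly the three statements
$\weight_3 H^3(Z^{\circ})\cong \mathcal{I}_1$,
$\weight_4 H^3(Z^{\circ}) = \weight_5 H^3(Z^{\circ})\cong \mathcal{I}_3$,
$\weight_6 H^3(Z^{\circ})\cong \mathcal{R}_{F_a}$.
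A consistency check that I would perform is to compare the dimensions of the graded pieces with those computed in Proposition \ref{prop:weight3} ($\dim\mathrm{Gr}^{\weight}_6=1$, $\dim\mathrm{Gr}^{\weight}_5=0$, $\dim\mathrm{Gr}^{\weight}_4=l(\Delta)-4$, $\dim\mathrm{Gr}^{\weight}_3=2$), which match $\dim(\mathcal{R}_{F_a}/\mathcal{I}_3)$, $\dim(\mathcal{I}_3/\mathcal{I}_3)$, $\dim(\mathcal{I}_3/\mathcal{I}_1)$, $\dim\mathcal{I}_1$ by \eqref{reflexiveRF}.

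The one nontrivial point---really the only obstacle---is to justify that the residue map is strictly compatible with the weight filtrations and shifts them by $-2$. I would argue this at the level of the weight filtrations constructed above via $\mathcal{V}$ on $\V$: the residue along $Z$ sends $\mathcal{V}_k\Omega^i_{\V}(\log(Z+\mathbb{D}))$ to $\mathcal{V}_{k-1}\Omega^{i-1}_Z(\log D)$, which after the usual indexing shift between the cohomological complex and the weight on $H^\bullet$ translates to a $-2$ shift on the weight filtration of $H^{\bullet}$. With this verified, the theorem follows immediately by matching the identifications as above.
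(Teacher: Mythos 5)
Your proposal is correct and follows exactly the paper's route: the paper derives Theorem \ref{prop:weight2} in a single sentence from the preceding proposition on $\mathcal{W}_\bullet H^4(\T^2\times\mathbb{C}^2\setminus Z^{\circ})$, using precisely the fact that the Poincar\'e residue map decreases the weight by $2$. Your additional justification of the weight shift (the residue being a morphism of type $(-1,-1)$, i.e.\ a Tate twist on graded pieces) and the dimension cross-check against Proposition \ref{prop:weight3} are sound elaborations of what the paper leaves implicit.
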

%%%%%%%%%%%%%%%%%%%%%%%%%%%%%%%%%%%%%%%%%%%%%%%%%%%%%%%%%%%%%%%%%%%%%%%%%%%%%%%%%%%%%%%%%%%%%%
%%%%%%%%%%%%%%%%%%%%%%%%%%%%%%%%%%%%%%%%%
\subsection{Deformation and Obstruction}\label{sec:deformation}
%%%%%%%%%%%%%%%%%%%%%%%%%%%%%%%%%%%%%%%%%
By Kawamata's result \cite{Kawamata},
$H^1\big(Z,T_{Z}(-\log D))\big)$
and $H^2\big(Z,T_{Z}(-\log D))\big)$
are the set of infinitesimal logarithmic deformations
and the set of obstructions respectively.

Let $\threeform$ be the following global section of 
$K_{Z}(D)=\Omega^3_Z(\log D)$ :
$$
\threeform=
\res_{Z} \frac{1}{P}\frac{dt_1}{t_1}\frac{dt_2}{t_2}
{dx}{dy}~.
$$
\begin{proposition}
\begin{equation}\begin{split}\nonumber
&H^1(Z,T_{Z}(-\log D))\stackrel{\sim}{\rightarrow}
 H^1(Z,\Omega^2_{Z}(\log D)) \cong R_F^1~,\\
&H^2(Z,T_{Z}(-\log D))\stackrel{\sim}{\rightarrow}
 H^2(Z,\Omega^2_{Z}(\log D))=0~,
\end{split}
\end{equation}
where the isomorphisms are given by the contraction with 
the three form $\threeform$.
\end{proposition}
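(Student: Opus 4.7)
The plan is to deduce both isomorphisms from a single sheaf-level observation: $\threeform$ is a nowhere-vanishing global section of the log canonical bundle $\Omega^3_Z(\log D) = K_Z(D)$, so that $(Z, D)$ is log Calabi--Yau. Granting this, contraction with $\threeform$ gives a sheaf isomorphism and the cohomological statements follow by comparing with the Hodge-filtered pieces already computed in Theorem \ref{prop:Hodge}.

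First I would verify the log Calabi--Yau property using the compactification constructed in \S \ref{sec:compact}. The ambient toric variety $\V$ is smooth, so $K_{\V} \cong \mathcal{O}_{\V}(-\mathbb{D}_{\T})$, where $\mathbb{D}_{\T} = \divisor + \mathbb{E}_1 + \mathbb{E}_2$ is the sum of all toric divisors. Since $Z$ lies in the linear system $|\mathbb{E}_1+\mathbb{E}_2|$, we have $K_{\V}(Z+\divisor) \cong K_{\V}\otimes \mathcal{O}(\mathbb{D}_{\T}) \cong \mathcal{O}_{\V}$, trivialized by $\tilde{P}^{-1}\frac{dt_1}{t_1}\wedge\cdots\wedge\frac{dt_4}{t_4}$. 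Adjunction together with the Poincar\'e residue construction then identifies $K_Z(D) \cong \mathcal{O}_Z$ with trivializing section $\threeform$.

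Second, since $\Omega^1_Z(\log D)$ is a rank-three vector bundle on $Z$ with trivial determinant, the standard identity $\wedge^2 E \cong (\det E)\otimes E^{\vee}$ gives a canonical isomorphism $\Omega^2_Z(\log D) \cong T_Z(-\log D)\otimes \Omega^3_Z(\log D)$; trivializing the last factor by $\threeform$, this isomorphism is realized by the contraction $v \mapsto \contraction_v\threeform$. Taking cohomology yields the desired isomorphisms $H^i(Z, T_Z(-\log D)) \overset{\sim}{\to} H^i(Z, \Omega^2_Z(\log D))$ for all $i$.

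Third, I would identify the right-hand sides via the Hodge filtration. Because $D$ is a simple normal crossing divisor and $Z$ is smooth projective, Deligne's $E_1$-degeneration gives $H^q(Z, \Omega^p_Z(\log D)) \cong \mathrm{Gr}^p_{\hodge}H^{p+q}(Z^{\circ})$. For $p=2$, $q=1$, Theorem \ref{prop:Hodge} yields $\mathrm{Gr}^2_{\hodge}H^3(Z^{\circ}) \cong R_F^1$, giving $H^1(Z, \Omega^2_Z(\log D)) \cong R_F^1$. For $p=2$, $q=2$ we need $\mathrm{Gr}^2_{\hodge}H^4(Z^{\circ})$, which vanishes because $Z^{\circ}$ is affine of complex dimension three, so $H^4(Z^{\circ})=0$ by Remark \ref{rem:dim}. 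The only nontrivial step is the log Calabi--Yau verification above; once that is in hand the rest is formal, so this is where I would spend the most care, in particular checking that the residue of the distinguished toric trivialization agrees with the explicit form $\threeform$ used to define the contraction.
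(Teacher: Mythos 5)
Your proposal is correct and follows the same route as the paper's own (much terser) proof: contraction with $\threeform$ is a sheaf isomorphism because $K_Z(D)\cong\mathcal{O}_Z$ and the bundles are locally free, and the cohomological identifications come from Theorem \ref{prop:Hodge} together with the vanishing of $H^4(Z^{\circ})$ noted in Remark \ref{rem:dim}. The extra details you supply (the toric adjunction computation for the log Calabi--Yau property and the identity $\wedge^2 E\cong(\det E)\otimes E^{\vee}$) are exactly the steps the paper leaves implicit.
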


\begin{proof}
The contraction with $\omega$ is an isomorphism
since  $K_{Z}(D)\cong \mathcal{O}_{Z}$
and $T_{Z}(-\log D)$ and $\Omega^2_{Z}(\log D)$ are locally free.
For the rest, see Theorem \ref{prop:Hodge} and Remark \ref{rem:dim}.
\end{proof}
%%%%%%%%%%%%%%%%%%%%%%%%%%%%%%%%%%%%%%%%%%%%%%%%%%%%%%%%%%%%%%%%%%%%%%%%%%%%%%%%%%%%%%%%%%%%%%
\subsection{Variation of Mixed Hodge Structures}
\label{sec:A6}
%%%%%%%%%%%%%%%%%%%%%%%%%%%%%%%%%%%%%%%%%%%%%%%%
Varying the parameter $a\in \mathbb{L}_{{\rm reg}}(\Delta)$,
we obtain a family of threefolds $Z_a^{\circ}$:
$$
p': \mathcal{Z}'\rightarrow \mathbb{L}_{{\rm reg}}(\Delta)~.
$$
We have
$$
R^3p'_*\mathbb{Z}\otimes
\mathcal{O}_{\mathbb{L}_{{\rm reg}}(\Delta)}
\cong \mathcal{R}_{F}[a]
\otimes \mathcal{O}_{\mathbb{L}_{{\rm reg}}(\Delta)}
~.
$$
The Gauss--Manin connection on $\nabla_{a_m}$ on 
$R^3p'_*\mathbb{Z}\otimes
\mathcal{O}_{\mathbb{L}_{{\rm reg}}(\Delta)}$ 
corresponds to the derivation $\mathcal{D}_{a_m}$
since it corresponds
to the differentiation by $a_m$
on $\Gamma\Omega^4_{\T^2\times \mathbb{C}^2}(*Z_a^{\circ})$.

Let $\omega_a$ be the relative holomorphic three form
on $\mathcal{Z}'$
such that $$
\omega_a
=
\mathrm{Res}\frac{1}{F_a+xy}
\frac{dt_1}{t_1}\frac{dt_2}{t_2}{dx}{dy}~.
$$
By Proposition \ref{prop:Ahyper}, we obtain
\begin{corollary}
1. $H^3(Z^{\circ}_a)$ is spanned by $\omega_{a}$, 
$\nabla_{\partial_{a_m}}\omega_{a}$ 
and $\nabla_{\partial_{a_m}}\nabla_{\partial_{a_n}}\omega_{a}$
$(m, n\in A(\Delta))$.\\
2. 
$\threeform_a$ satisfies the $A$-hypergeometric system \eqref{Ahyper}
with $\partial_{a_i}$ replaced by $\GM_{\partial_{a_i}}$.
\\
3. Period integrals of $\omega_a$
satisfies the $A$-hypergeometric system \eqref{Ahyper}.
Conversely, a solution of the $A$-hypergeometric system
\eqref{Ahyper} is a period integral.
\end{corollary}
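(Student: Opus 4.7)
The proof is essentially a translation of Proposition \ref{prop:Ahyper} through the isomorphism $\rho': \mathcal{R}_{F_a} \overset{\cong}{\to} H^3(Z_a^{\circ})$ constructed via the Poincar\'e residue. The key input, already stated in the paragraph preceding the corollary, is that the Gauss--Manin connection $\nabla_{\partial_{a_m}}$ corresponds to the algebraic operator $\mathcal{D}_{a_m}$ under the identification $R^3p'_*\mathbb{Z} \otimes \mathcal{O}_{\mathbb{L}_{\mathrm{reg}}(\Delta)} \cong \mathcal{R}_F[a] \otimes \mathcal{O}_{\mathbb{L}_{\mathrm{reg}}(\Delta)}$. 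This identification follows because differentiation by $a_m$ of the explicit form $R'(t_0^k t^m)/(F_a+xy)^{k+1} \cdot \omega_{0}^{(4)}$ produces, modulo exact forms coming from the relations defining $\mathcal{R}_{F_a}$, the action of $\mathcal{D}_{a_m}$.

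Granting this, statement 1 is immediate from Proposition \ref{prop:Ahyper}.1: since $\mathcal{R}_{F_a} = \mathcal{E}^{-2}$ is spanned by $\mathcal{D}_{a_{m_1}}\cdots \mathcal{D}_{a_{m_k}} \cdot 1$ for $0 \leq k \leq 2$, and $\rho'(1) = \omega_a$ while $\rho'(\mathcal{D}_{a_m}\alpha) = \nabla_{\partial_{a_m}}\rho'(\alpha)$, the image spans $H^3(Z_a^{\circ})$. Statement 2 is likewise immediate from Proposition \ref{prop:Ahyper}.2: the identity $\bigl(\hyper_i|_{\theta_{a_m}\to a_m \mathcal{D}_{a_m}}\bigr) 1 = 0$ and $\bigl(\square_l|_{\partial_{a_m}\to \mathcal{D}_{a_m}}\bigr) 1 = 0$ in $\mathcal{R}_{F_a}[a]$ translate directly via $\rho'$ into the corresponding equations for $\omega_a$ with $\partial_{a_m}$ replaced by $\nabla_{\partial_{a_m}}$.

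For statement 3, I would argue as follows. Given $\gamma \in H_3(Z_a^{\circ},\mathbb{Z})$, the period $\int_\gamma \omega_a$ is a pairing between homology and the cohomology class, and by the definition of the Gauss--Manin connection $\partial_{a_m}\int_\gamma \omega_a = \int_\gamma \nabla_{\partial_{a_m}}\omega_a$; iterating this and applying the operators $\hyper_i$ and $\square_l$ to the period, statement 2 forces each such period to annihilate the $A$-hypergeometric system. For the converse, one counts dimensions: the number of linearly independent solutions to the system (\ref{Ahyper}) equals the normalized volume of $\Delta$ by \cite{GKZ1}, and this volume equals $\dim \mathcal{R}_{F_a} = \dim H^3(Z_a^{\circ})$ (since $\rho'$ is an isomorphism, and the analogous equality is already known for $H^2(\T^2, C_a^{\circ}) \cong \mathcal{R}_{F_a}$ from Stienstra's corollary). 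Because the space of periods injects into the solution space, a dimension comparison shows this injection is onto.

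The only point requiring genuine care is the converse in statement 3, namely showing that every solution of (\ref{Ahyper}) arises as a period. Modulo the dimension equality cited above, this reduces to the nondegeneracy of the period pairing $H_3(Z_a^{\circ}, \mathbb{Z}) \otimes \mathbb{C} \times H^3(Z_a^{\circ}) \to \mathbb{C}$ restricted to the cyclic $\mathcal{D}$-submodule generated by $\omega_a$, which is all of $H^3(Z_a^{\circ})$ by statement 1. This should be the main (and essentially the only) technical point, but it is formally parallel to the argument for $H^2(\T^2, C_a^{\circ})$ and Gross's observation that integration of $\omega_a$ over a $3$-cycle reduces under $\rho \circ \rho'^{-1}$ to integration of $\omega_0$ over a $2$-chain (cf.\ Theorem \ref{thm:relationship}), so the compatibility reduces it entirely to the known case.
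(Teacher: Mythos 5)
Your proposal is correct and follows essentially the same route as the paper, which derives the corollary directly from Proposition \ref{prop:Ahyper} via the isomorphism $\rho'$ and the identification of the Gauss--Manin connection $\nabla_{\partial_{a_m}}$ with $\mathcal{D}_{a_m}$ established in the preceding paragraph. The paper leaves statement 3 (in particular the converse, via the dimension count against the GKZ solution space and the nondegeneracy of the period pairing) entirely implicit, so your elaboration there is a faithful filling-in rather than a different argument.
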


%%%%%%%%%%%%%%%%%%%%%%%%%%%%


\begin{thebibliography}{99}
%%%%%%%%%%%%%%%%%%%%%%%%%%%%
\bibitem{AKMV}
Mina Aganagic, Albrecht Klemm, Marcos Marino and Cumrun Vafa, 
{\it The topological vertex},
Comm. Math. Phys. {\bf 254} (2005), no. 2, 425--478.
%%CITATION = CMPHA,254,425;%%

\bibitem{ABK}
Mina Aganagic, Vincent Bouchard and Albrecht Klemm,  
{\it Topological strings and (almost) modular forms}, 
Comm. Math. Phys. {\bf 277} (2008), no. 3, 771--819.
%%CITATION = CMPHA,277,771;%%

\bibitem{AlimLaengeMayr}
Murad Alim, Jean Dominique L\"ange and Peter Mayr,
{\it
 Global Properties of Topological String Amplitudes and Orbifold Invariants},
arXiv:0809.4253 [hep-th]. 
%%CITATION = ARXIV:0809.4253;%%

\bibitem{AGV}
V. I. Arnol'd, S. M. Guse\u{i}n-Zade, and A. N. Varchenko, 
{\it Singularities of differentiable maps, Vol. II. 
Monodromy and asymptotics of integrals}, 
Translated from the Russian by Hugh Porteous. 
Translation revised by the authors and James Montaldi. 
Monographs in Mathematics, 83. Birkh\"auser Boston, Inc., Boston, MA, 1988. viii+492 pp. 


\bibitem{Batyrev}
Victor V. Batyrev, {\it
Variations of the mixed Hodge structure of affine hypersurfaces in algebraic tori},
Duke Math. J. {\bf 69} (1993), no. 2, 349--409. 

\bibitem{Batyrev-mirror}
\bysame,
{\it
Dual polyhedra and mirror symmetry for Calabi-Yau hypersurfaces 
in toric varieties}, 
J. Algebraic Geom. {\bf 3} (1994), no. 3, 493--535. 
%%CITATION = 00124,3,493;%%

\bibitem{BCOV}
M. Bershadsky, S. Cecotti, H. Ooguri and C. Vafa, 
{\it Kodaira--Spencer theory of gravity and exact results for quantum string amplitudes}, 
Comm. Math. Phys. {\bf 165} (1994), no. 2, 311--427. 
%%CITATION = CMPHA,165,311;%%

\bibitem{BT}
Andrea Brini and Alessandro Tanzini, 
{\it Exact results for topological strings on resolved $Y\sp {p,q}$ singularities},  
Comm. Math. Phys. {\bf 289} (2009), no. 1, 205--252. 
%%CITATION = CMPHA,289,205;%%

\bibitem{Candelas}
Philip Candelas, Xenia C. de la Ossa, Paul S. Green and Linda Parkes,
{\it A pair of Calabi--Yau manifolds as an exactly soluble superconformal theory}, 
Nuclear Phys. B {\bf 359} (1991), no. 1, 21--74. 
%%CITATION = NUPHA,B359,21;%%

\bibitem{Carlson}
James A. Carlson,
{\it Extensions of mixed Hodge structures},  
Journ\'ees de G\'eometrie Alg\'ebrique d'Angers, Juillet 1979, 
pp. 107--127, Sijthoff \& Noordhoff, Alphen an den Rijn--Germantown, Md., 1980. 

\bibitem{CKYZ}
T.-M. Chiang, A. Klemm, S.-T. Yau and E. Zaslow, {\it
Local mirror symmetry: calculations and interpretations}, 
Adv. Theor. Math. Phys. {\bf 3} (1999), no. 3, 495--565.
%%CITATION = 00203,3,495;%%

\bibitem{CoxKatz}
David A. Cox and Shedon Katz, {\it Mirror symmetry and algebraic geometry},
 Mathematical Surveys and Monographs, 68. American Mathematical Society, Providence, RI, 1999. xxii+469 pp.  

\bibitem{Deligne2}
Pierre Deligne, 
{\it Th\'eorie de Hodge. II.} 
Inst. Hautes \'Etudes Sci. Publ. Math. {\bf 40} (1971), 5--57. 

\bibitem{Deligne3}
\bysame, 
{\it Th\'eorie de Hodge. III.} 
Inst. Hautes \'Etudes Sci. Publ. Math. {\bf 44} (1974), 5--77. 

\bibitem{FJ}
Brian Forbes and Masao Jinzenji, 
{\it Extending the Picard-Fuchs system of local mirror symmetry}, 
J. Math. Phys. {\bf 46} (2005), no. 8, 082302, 39 pp. 
%%CITATION = JMAPA,46,082302;%%

\bibitem{GKZ1}
I. M. Gel'fand, A. V. Zelevinskii and M. M. Kapranov, {\it 
Hypergeometric functions and toric varieties}. (Russian) Funktsional. Anal. i Prilozhen. {\bf 23} (1989), no. 2, 12--26; translation in Funct. Anal. Appl. {\bf 23} (1989), no. 2, 94--106 

\bibitem{GKZ2}
\bysame,
{\it Generalized Euler integrals and $A$-hypergeometric functions},
Adv. Math. {\bf 84} (1990), no. 2, 255--271. 

\bibitem{GreenePlesser}
B. R. Greene and M.R. Plesser,
{\it Duality in Calabi--Yau moduli space}, 
Nuclear Phys. B {\bf 338} (1990), no. 1, 15--37. 
%%CITATION = NUPHA,B338,15;%%

\bibitem{GriffithsHarris}
Phillip Griffiths and Joseph Harris, 
{\it Principles of algebraic geometry}, Reprint of the 1978 original. Wiley Classics Library. John Wiley $\&$ Sons, Inc., New York, 1994. xiv+813 pp. 


\bibitem{Gross}
Mark Gross, 
{\it Examples of special Lagrangian fibrations}, 
Symplectic geometry and mirror symmetry (Seoul, 2000), 81--109, 
World Sci. Publ., River Edge, NJ, 2001. 


\bibitem{Grothendieck}
Alexander Grothendieck,  {\it 
On the de Rham cohomology of algebraic varieties}, 
Inst. Hautes \'Etudes Sci. Publ. Math. {\bf 29} (1966), 95--103. 

\bibitem{HaghihatKlemmRauch}
Babak Haghighat, Albrecht Klemm and Marco Rauch,
{\it Integrability of the holomorphic anomaly equations}, 
J. High Energy Phys. (2008), no. 10, 097, 37 pp.
%%CITATION = JHEPA,0810,097;%%

\bibitem{HIV}
K.~Hori, A.~Iqbal and C.~Vafa,
{\it D-branes and mirror symmetry},
arXiv:hep-th/0005247.
%%CITATION = HEP-TH/0005247;%%

\bibitem{HosonoHAE}
Shinobu Hosono, 
{\it Counting BPS states via holomorphic anomaly equations}, 
Calabi-Yau varieties and mirror symmetry (Toronto, ON, 2001), 57--86, Fields Inst. Commun., 38, Amer. Math. Soc., Providence, RI, 2003. 
%%CITATION = HEP-TH/0206206;%%

\bibitem{Hosono}
\bysame,
{\it Central charges, symplectic forms, and hypergeometric series in local mirror symmetry}, 
Mirror symmetry. V, 405--439, 
AMS/IP Stud. Adv. Math., 38, Amer. Math. Soc., Providence, RI, 2006. 
%%CITATION = HEP-TH/0404043;%%

\bibitem{HKTY1}
S. Hosono, A. Klemm, S. Theisen and S.-T. Yau, 
{\it Mirror symmetry, mirror map and applications to Calabi--Yau hypersurfaces},
Comm. Math. Phys. {\bf 167} (1995), no. 2, 301--350. 
%%CITATION = CMPHA,167,301;%%

\bibitem{Kawamata}
Yujiro Kawamata,  {\it 
On deformations of compactifiable complex manifolds},
Math. Ann. {\bf 235} (1978), no. 3, 247--265. 

\bibitem{KatzKlemmVafa}
Sheldon H. Katz, Albrecht Klemm and Cumrun Vafa, 
{\it Geometric engineering of quantum field theories}, 
Nuclear Phys. B {\bf 497} (1997), 173--195. 
%%CITATION = NUPHA,B497,173;%%


\bibitem{KlemmZaslow}
Albrecht Klemm and Eric Zaslow, 
{\it Local mirror symmetry at higher genus}, 
Winter School on Mirror Symmetry, Vector Bundles and Lagrangian Submanifolds (Cambridge, MA, 1999), 183--207, AMS/IP Stud. Adv. Math., 23, Amer. Math. Soc., Providence, RI, 2001. 
%%CITATION = HEP-TH/9906046;%%

\bibitem{Mavlyutov}
 Anvar R. Mavlyutov,
{\it Cohomology of rational forms and a vanishing theorem on toric varieties},
J. Reine Angew. Math. {\bf 615} (2008), 45--58.

\bibitem{Oda} Tadao Oda,
{\it Convex bodies and algebraic geometry.
An introduction to the theory of toric varieties}, Translated from the Japanese. Ergebnisse der Mathematik und ihrer Grenzgebiete (3)15. 
Springer-Verlag, Berlin, 1988. viii+212 pp. 

\bibitem{PS}
Chris A. M. Peters and Joseph H. M. Steenbrink, 
{\it Mixed Hodge structures}, 
Ergebnisse der Mathematik und ihrer Grenzgebiete. 3. Folge. 
52. Springer-Verlag, Berlin, 2008. xiv+470 pp.

\bibitem{Stienstra}
Jan 
Stienstra, {\it
Resonant hypergeometric systems and mirror symmetry},
Integrable systems and algebraic geometry (Kobe/Kyoto, 1997), 412--452, World Sci. Publ., River Edge, NJ, 1998.

\bibitem{NTakahashi}
Nobuyoshi Takahashi, 
{\it Log mirror symmetry and local mirror symmetry}, 
Comm. Math. Phys. {\bf 220} (2001), no. 2, 293--299. 


\bibitem{Usui}
Sampei Usui,  {\it
Variation of mixed Hodge structures arising from family of logarithmic deformations}, 
Ann. Sci. \'Ecole Norm. Sup. (4){\bf 16} (1983), no. 1, 91--107. 

\bibitem{Voisin-mirror}
Claire Voisin, 
{\it Mirror symmetry},
Translated from the 1996 French original by Roger Cooke. SMF/AMS Texts and Monographs, 1. American Mathematical Society, Providence, RI; Soci\'et\'e Math\'ematique de France, Paris, 1999. xx+120 pp. 


\bibitem{Voisin}
\bysame,
{\it
Hodge theory and complex algebraic geometry, I}, 
Translated from the French original by Leila Schneps. Cambridge Studies in Advanced Mathematics, 76. Cambridge University Press, Cambridge, 2002.

\bibitem{Voisin2}
\bysame,
{\it Hodge theory and complex algebraic geometry, II}, 
Translated from the French original by Leila Schneps. Cambridge Studies in Advanced Mathematics, 77. Cambridge University Press, Cambridge, 2003.


\bibitem{Witten}
Edward Witten,
{\it Quantum background independence in string theory},
arXiv:hep-th/9306122.
%%CITATION = HEP-TH/9306122;%%

\bibitem{YamaguchiYau}
Satoshi Yamaguchi and Shing-Tung Yau,
{\it Topological string partition functions as polynomials},
J. High Energy Phys. 2004, no. 7, 047, 20 pp. (electronic). 
%%CITATION = JHEPA,0407,047;%%

\end{thebibliography}
\end{document}